\newcommand{\R}{\mathbb{R}}
\newcommand{\spec}{\mathrm{spec}\,}
\newcommand{\supp}{\mathrm{supp}\,}
\newcommand{\re}{\,\mathrm{Re}\,}
\newcommand{\im}{\,\mathrm{Im}\,}
\newtheorem{thm}{Theorem}[section]
\newtheorem{lem}[thm]{Lemma}
\newtheorem{kcl}[thm]{Key Convexity Lemma}
\newtheorem{cor}[thm]{Corollary}
\theoremstyle{remark}
\newtheorem{rem}[thm]{Remark}
\theoremstyle{definition}
\numberwithin{equation}{section}
\begin{document}

\title[Quantitative Unique Continuation \ldots]{Quantitative Unique Continuation, Logarithmic Convexity of Gaussian Means and Hardy's Uncertainty Principle}

\author{Carlos E. Kenig}
\address{Depatment of Mathematics, University of Chicago, Chicago, IL 60637,USA}
\email{cek@math.uchicago.edu}
\thanks{Partially supported by NSF Grant \#DMS-0456583.}
\subjclass[2000]{Primary 35Q53; Secondary 35G25, 35D99}

\maketitle

In this paper we describe some recent works on quantitative unique continuation for elliptic, parabolic and dispersive equations. We also discuss recent works on the logarithmic convexity of Gaussian means of solutions to Schr\"odinger evolutions and the connection with a 
well-known version of the uncertainty principle, due to Hardy. The elliptic results are joint work with J. Bourgain \cite{BK}, while the remainder of the works discussed here are joint works with L. Escauriaza, G. Ponce and L. Vega (\cite{EKPV1}, \cite{EKPV2}, \cite{EKPV3},
\cite{EKPV4}, \cite{EKPV5}). The paper is based on lectures presented at WHAPDE 2008, Merida, Mexico. I am grateful to the organizers of WHAPDE 2008 and to the participants in the workshop for the invitation and the very friendly atmosphere of the workshop. For further references and background on the problems discusses here, see \cite{BK}, \cite{K1}, \cite{K2}, \cite{EKPV1}, \cite{EKPV2}, \cite{EKPV3},
\cite{EKPV4}, \cite{EKPV5} and the references therein.

\section{Some recent quantitative unique continuation theorems}

Here I will discuss some quantitative unique continuation theorems for elliptic, parabolic, and dispersive equations. I will start by describing the elliptic situation. This arose as a key step in the work of \cite{BK} which proved Anderson localization at the bottom of the spectrum for the continuous Bernoulli model in higher dimensions, a question originating in Anderson's paper \cite{A}. Briefly, this says the following: consider a random Schr\"odinger operator on $\R^n$, of the form $H_\epsilon=-\triangle+V_\epsilon$, where
\begin{displaymath}
V_\epsilon(x)=\sum_{j\in\mathbb{Z}^n}\epsilon_j\phi(x-j),\quad\phi\in C_0^\infty(B(0,1/10)),\quad 0\leq\phi\leq1
\end{displaymath}
and $\epsilon_j\in\{0,1\}$ are independent. It is not difficult to see that $\inf\spec H_\epsilon=0$ a.s. . In this context, Anderson localization means that for energies $E$ near the bottom of the spectrum (i.e. $0<E<\delta$) $H_\epsilon$ has pure point spectrum, with exponentially decaying eigenfunctions, a.s. . When $V_\epsilon$ has  a continuous site distribution ($\epsilon_j\in[0,1]$) this has been understood for some time (\cite{GMP} $n=1$, \cite{FS} $n>1$). For the Anderson-Bernoulli model this was known for $n=1$ (\cite{CKM}; \cite{SVW}), but not in higher dimensions. We now have:
\begin{thm}[\cite{BK}]
There exists $\delta>0$ s.t. for $0<E<\delta$, $H_\epsilon$ displays Anderson localization a.s., $n\geq1$.
\end{thm}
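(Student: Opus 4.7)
The plan is to implement a multiscale analysis (MSA), suitably adapted to the Bernoulli setting where the site distribution is singular and the standard Wegner estimate (obtained by spectral averaging in the coupling constants) is unavailable. The output of the MSA will be exponential off-diagonal decay of finite-volume Green's function matrix elements at energies $E\in(0,\delta)$; by standard arguments in the style of Frohlich--Martinelli--Scoppola--Spencer, this yields pure point spectrum with exponentially decaying eigenfunctions in $(0,\delta)$ a.s.

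First I would establish the initial-scale estimate. Because $V_\epsilon\geq 0$ and $\inf\spec H_\epsilon=0$ a.s., at small positive energy one has a Lifshitz-tail type input: a finite-volume box $\Lambda_L$ can have an eigenvalue below $E$ only when an atypically large fraction of the Bernoulli variables $\epsilon_j$ inside $\Lambda_L$ vanish, an event of probability decaying like $\exp(-cL^n)$. This gives the MSA base case and explains why the result is restricted to $0<E<\delta$.

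The inductive step is the main obstacle. In the continuous-distribution case, one rules out resonant boxes via Wegner; here that tool fails and must be replaced. The Bourgain--Kenig mechanism designates a positive-density set of ``free'' sites and uses a quantitative unique continuation theorem for $-\Delta u+(V-E)u=0$ to guarantee that eigenfunctions cannot be too concentrated. Concretely, one needs an estimate of the form
\begin{equation*}
\sup_{|x-x_0|\leq 1}|u(x)|\;\geq\;c\,\exp\!\bigl(-CR^{4/3}(\log R)^{A}\bigr)
\end{equation*}
for any $L^\infty$-normalized solution $u$ of $-\Delta u+Vu=0$ with $\|V\|_\infty\leq 1$ and any $|x_0|\sim R$. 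The subquadratic exponent $4/3$, strictly better than the trivial $2$ obtained by integrating the equation, is essential: it implies that flipping the free Bernoulli variables produces enough first-order eigenvalue motion at a typical eigenfunction to substitute for the missing Wegner bound.

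Granted this quantitative unique continuation input, which I would prove via Carleman inequalities designed to deliver the $4/3$ rate and three-ball-type interpolation, the inductive step follows the standard scheme. Assume exponential decay of the finite-volume Green's function at scale $L_k$ with high probability; upgrade to $L_{k+1}=L_k^{\alpha}$ for some $\alpha>1$ by covering the new box with old ones, controlling resonant sub-boxes through the inductive hypothesis, and using the unique continuation lower bound together with the Bernoulli averaging over free sites to rule out the remaining bad configurations. Choosing $\delta$ small aligns the Lifshitz input, the MSA parameters, and the unique continuation constants, and closes the induction. The principal difficulty, and the novelty of \cite{BK}, is precisely the quantitative unique continuation estimate with exponent $4/3$ and its integration into the probabilistic multiscale scheme.
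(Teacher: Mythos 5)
Your outline matches the Bourgain--Kenig strategy exactly: a multiscale analysis with a Lifshitz-tail initial estimate, free sites replacing the unavailable Wegner bound, and the quantitative unique continuation lower bound $M(R)\geq C\exp(-CR^{4/3}\log R)$ (proved by rescaling plus Carleman estimates) as the mechanism to generate enough eigenvalue movement under Bernoulli flips. One small refinement: being merely subquadratic is not what makes $4/3$ work; the MSA bookkeeping forces the exponent to lie strictly below $\tfrac{1+\sqrt{3}}{2}\approx 1.35$, which is why $4/3\approx 1.333$ barely squeaks through and why the result is genuinely delicate.
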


In establishing this result we were lead to the following deterministic quantitative unique continuation theorem:
Suppose that $u$ is a solution to $\triangle u+Vu=0$ in $\R^n$, where $|V|\leq1$, and $|u|\leq C_0$, $u(0)=1$. For $R$ large, define 
\begin{displaymath}
M(R)=\inf_{|x_0|=R}\sup_{B(x_0,1)}|u(x)|.
\end{displaymath}
Note that by unique continuation, $\sup_{B(x_0,1)}|u(x)|>0$. How small can $M(R)$ be?
\begin{thm}[\cite{BK}]
\begin{displaymath}
M(R)\geq C\exp(-CR^{4/3}\log R).
\end{displaymath}
\end{thm}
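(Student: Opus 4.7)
My plan is to proceed by contradiction via a Carleman estimate. Suppose $M(R)<\epsilon$, so that $|u|<\epsilon$ on the shell $\{R-1\le|x|\le R+1\}$. By Caccioppoli applied to $\Delta u=-Vu$ with $|V|\le1$, one also has $|\nabla u|\lesssim\epsilon$ on a slightly thinner shell. The aim is then to show that $\epsilon$ cannot be smaller than $\exp(-CR^{4/3}\log R)$.

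The key ingredient is a Carleman inequality of the form
\[
\tau^{3}\int e^{2\tau\phi}|v|^{2}\,dx\;\le\;C\int e^{2\tau\phi}|\Delta v|^{2}\,dx,\qquad v\in C_{c}^{\infty}(\R^{n}\setminus\{0\}),
\]
for a radial weight $\phi$ chosen from a carefully designed parameter family and for $\tau\ge\tau_{0}$. Applied to $v=\chi u$ with a cutoff $\chi$ equal to $1$ on $B(0,R-1)$ and supported in $B(0,R+1)$, and using $\Delta(\chi u)=-\chi Vu+2\nabla\chi\cdot\nabla u+u\Delta\chi$ together with $|V|\le1$, the zeroth-order contribution is absorbed by the $\tau^{3}$ factor on the left once $\tau$ is large, and only cutoff-commutator terms survive:
\[
\tau^{3}\int e^{2\tau\phi}|\chi u|^{2}\,dx\;\le\;C\int_{\{R-1\le|x|\le R+1\}}e^{2\tau\phi}\bigl(|u|^{2}+|\nabla u|^{2}\bigr)\,dx.
\]
The LHS, restricted to $B(0,1)$, is at least $c\,e^{2\tau\phi(0)}$ thanks to $u(0)=1$, while the RHS is bounded by $C\epsilon^{2}R^{n-1}e^{2\tau\phi(R)}$ since $\phi$ is essentially constant on the thin shell at radius $R$. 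This yields the key inequality
\[
\epsilon^{2}\;\ge\;c\,\tau^{3}R^{-(n-1)}e^{2\tau(\phi(0)-\phi(R))}.
\]

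Everything now hinges on choosing $\phi$ and $\tau$ to make $\tau\bigl(\phi(R)-\phi(0)\bigr)$ as small as possible subject to (i) the Carleman estimate actually holding for this $\phi$, and (ii) $\tau$ being large enough to absorb a size-one potential throughout $B(0,R+1)$. More convex weights simplify (i) but worsen the gap, and the scale on which the potential term must be dominated grows with $R$; the balance forces $\tau\sim R^{1/3}$ and $\phi(R)-\phi(0)\sim R\log R$, which combine to give the exponent $R^{4/3}\log R$ in the final bound on $\epsilon$.

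\emph{Where the difficulty lies.} The whole subtlety is in designing the Carleman weight $\phi$ that realizes this optimum. A too-simple weight (e.g.\ polynomial in $|x|$) yields a strictly worse exponent, while a weight lacking enough strict pseudoconvexity cannot absorb a unit potential throughout $\R^{n}$. The two-parameter weight family used in \cite{BK}, together with explicit control of all constants in the Carleman estimate as the parameters vary, is the technical crux; I expect this construction and the subsequent optimization to be the main obstacle.
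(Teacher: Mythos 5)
Your framework---a Carleman inequality, a cutoff on the outer annulus, and an optimization over the radial weight $\phi$ and the large parameter $\tau$---is the right family of ideas, but it departs from the route actually used, which the paper summarizes as ``a rescaling procedure, combined with well-known Carleman estimates.'' In \cite{BK} one first rescales by $x\mapsto Rx$, which turns $B(x_0,1)$ into a ball of radius $\sim 1/R$ inside a \emph{fixed} unit domain while inflating the potential to $W=R^2V(R\,\cdot)$, with $\|W\|_\infty\sim R^2$. A standard Carleman/doubling estimate on the unit ball, with parameter threshold $\tau\gtrsim\|W\|^{2/3}\sim R^{4/3}$, then bounds the vanishing order, giving $\max_{B(y_0,r)}|v|\gtrsim (cr)^{CR^{4/3}}$; taking $r=1/R$ produces the $\log R$ automatically from $\log(1/r)$, with no need to engineer a bespoke weight. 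Your plan skips the rescaling and works directly on $B(0,R+1)$; this is in principle a change of variables away from the same statement, but it transfers the entire difficulty onto building a single weight whose Carleman constant and threshold are controlled uniformly on a domain of size $R$, which is exactly what the rescaling avoids by keeping the domain fixed and letting the large coefficient carry the $R$-dependence.

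Two gaps to flag in the proposal as written. First, the scalings $\tau\sim R^{1/3}$ and $\phi(R)-\phi(0)\sim R\log R$ are asserted rather than derived from the constraints you list; the product is the right size, but without exhibiting $\phi$ and checking where the Carleman threshold and constant actually bind, there is no argument that these are the optimum, nor any explanation of why a logarithm (as opposed to, say, a pure power) must appear. Second, the Carleman inequality you invoke is stated for $v\in C_c^\infty(\R^n\setminus\{0\})$, but your $v=\chi u$ with $\chi\equiv 1$ on $B(0,R-1)$ does not vanish near the origin. For a radial weight the coefficient multiplying $\tau^3$ on the left (of the form $(\phi')^2\phi''$, coming from $D^2\phi\,\nabla\phi\cdot\nabla\phi$) vanishes at $x=0$ since $\phi'(0)=0$; if instead $\phi''$ is allowed to be singular at $0$ (as for $|x|^{4/3}$) the weight is not $C^2$ and the integration by parts needs separate justification. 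Either way, the lower bound on $\int_{B(0,1)}e^{2\tau\phi}|u|^2$ does not drop out of the estimate as quoted, and the extra cutoff or regularization you would need near the origin is precisely where the rescaled formulation (small ball \emph{at} the origin, known data at unit distance) is cleaner. Neither issue is fatal, and you are right that the weight construction is the crux, but the proposal has not yet reproduced the mechanism by which both the exponent $4/3$ and the logarithm emerge.
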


\begin{rem}

In order for our argument to give the desired application to Anderson localization for the Bernoulli model, we would need an estimate of the form $M(R)\geq C\exp(-CR^\beta)$, with $\beta<\frac{1+\sqrt{3}}{2}\approx1.35$. Note that $4/3=1.333\ldots$.
\end{rem}

As it turns out, this is a quantitative version of a conjecture of E.M. Landis. He conjectured (late 60's) that if $\triangle u+Vu=0$ in 
$\R^n$, where $|V|\leq1$,  $|u|\leq C_0$, and $\displaystyle|u(x)|\leq C\exp(-C|x|^{1+})$, then $u\equiv0$. This conjecture of Landis was disproved by Meshkov (\cite{M}), who constructed such a $V$, $u\not\equiv0$, with $\displaystyle|u(x)|\leq C\exp(-C|x|^{4/3})$. This example also shows the sharpness of our lower bound on $M(R)$. One should note however that in Meshkov's example $u$, $V$ are complex valued.

Our proof uses a rescaling procedure, combined with well-known Carleman estimates.

\paragraph{\textbf{Q:}} Can $4/3$ be improved to $1$ in our lower bound for $M(R)$ for real valued $u,V$?

Let us now turn our attention to parabolic equations. Thus, consider solutions to
\begin{displaymath}
\partial_t u -\triangle u+W(x,t)\cdot\nabla u+V(x,t)u=0
\end{displaymath}
in $\R^n\times(0,1]$, with $|W|\leq N$, $|V|\leq M$. Then, as is well-known, the following backward uniqueness result holds: 
If $|u(x,t)|\leq C_0$ and $u(x,1)\equiv0$, then $u\equiv0$ (see \cite{LO}). This result has been extended by Escauriaza-Seregin-\v Sver\'ak (\cite{ESS}) who showed that it is enough to assume that $u$ is a solution on $\R^n_+\times(0,1]$, where 
$\R^n_+=\{x=(x',x_n):x_n>0\}$, without any assumption on $u\arrowvert_{\partial\R^n_+\times[0,1]}$. This was a crucial ingredient in their proof that weak (Leray--Hopf) solutions of the Navier--Stokes system in $\R^3\times[0,1)$, which have uniformly bounded $L_x^3$ norm are regular and unique. In 1974, Landis--Oleinik, \cite{LO}, in parallel to Landis' conjecture for elliptic equations mentioned earlier, formulated the following conjecture: Let $u$ be as in the backward uniqueness situation mentioned above. Assume that, instead of $u(x,1)\equiv0$, we assume that $|u(x,1)|\leq C\exp(-C|x|^{2+\epsilon})$, for some $\epsilon>0$. Is then $u\equiv0$? Clearly, the exponent 2 is  optimal here.

\begin{thm}[\cite{EKPV1}] 
The Landis--Oleinik conjecture holds. More precisely, if $||u(\cdot,1)||_{L^2(B(0,1))}\geq\delta$, there exists 
$R_0=R_0(\delta,M,N,n)>0$ s.t. for $|y|\geq R_0$, we have
\begin{displaymath}
||u(\cdot,1)||_{L^2(B(0,1))}\geq C\exp(-C|y|^2\log|y|).
\end{displaymath}
Moreover, an analogous result holds for $u$ only defined in $\R^n_+\times(0,1]$.
\end{thm}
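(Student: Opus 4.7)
The plan is to prove this via a Carleman inequality with a Gaussian weight concentrated near $(y,1)$, adapting the backward-uniqueness technique of Escauriaza-Seregin-\v Sver\'ak \cite{ESS} to a quantitative setting. First I would use the global bound $|u|\leq C_0$ together with standard interior parabolic regularity on the slab $\R^n\times[1/2,1]$ to control $\nabla u$ pointwise, so that the first-order term $W\cdot\nabla u$ can be absorbed as a lower-order perturbation. Thus the analysis reduces to the model situation $\partial_t u - \Delta u + Vu = 0$ with $V$ bounded and both $u$ and $\nabla u$ pointwise bounded on the relevant slab.

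The heart of the argument is a Carleman estimate of the form
\begin{displaymath}
\alpha \int_0^1\!\!\int_{\R^n} e^{2\alpha \phi}|f|^2\,dx\,dt \leq C \int_0^1\!\!\int_{\R^n} e^{2\alpha \phi}|(\partial_t-\Delta)f|^2\,dx\,dt
\end{displaymath}
valid for $f\in C_0^\infty(\R^n\times(0,1))$ and $\alpha\geq\alpha_0$, with a weight $\phi(x,t)$ modeled on the backward heat kernel issued from $(y,1)$, something like $\phi(x,t) = -|x-y|^2/(8(1-t+\sigma)) + \gamma(t)$ for a small parameter $\sigma\sim|y|^{-2}$ and an auxiliary function $\gamma$. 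The Gaussian concentration of $e^{\alpha\phi}$ at $(y,1)$ on the correct parabolic scale is precisely what allows the sharp exponent $2$ in the final bound.

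I would then apply this inequality to $f=\theta u$ for a smooth cutoff $\theta$ equal to $1$ on a region connecting $B(0,1)$ with $B(y,1)$ at $t=1$, and vanishing outside a slightly larger set inside a large time-space cylinder. The commutator $[\partial_t-\Delta,\theta]u$ is supported on $\{\nabla\theta\neq 0\}$, where $\phi$ is arranged to be small, while the perturbation $Vu+W\cdot\nabla u$ is absorbed on the left by choosing $\alpha\geq C(M+N^2)$. Using the hypothesis $\|u(\cdot,1)\|_{L^2(B(0,1))}\geq\delta$ to lower-bound the left-hand side and estimating the target contribution in terms of $\|u(\cdot,1)\|_{L^2(B(y,1))}$, the inequality is violated unless $\|u(\cdot,1)\|_{L^2(B(y,1))}\geq C\exp(-C|y|^2\log|y|)$. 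Optimizing $\alpha$ and $\sigma$ as suitable powers of $|y|$ produces the stated lower bound, with the logarithmic factor arising from this multi-parameter balance.

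The main obstacle is engineering $\phi$ so that it is pseudo-convex with respect to the heat operator, equivalently so that $t\mapsto\|e^{\alpha\phi(\cdot,t)}u(\cdot,t)\|_{L^2}$ is log-convex along solutions, while simultaneously being sharply Gaussian-concentrated at $(y,1)$ and having well-controlled commutators on the cutoff annulus; any weakening of the pseudo-convexity degrades the exponent $2$. For the half-space version, one must additionally select $\phi$ to vanish on $\partial\R^n_+$, so that no boundary values of $u$ survive the integration by parts; this is the analytic mechanism that transfers the \cite{ESS} half-space backward uniqueness argument into the quantitative regime.
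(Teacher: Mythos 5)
Your overall strategy --- a parabolic Carleman estimate with a Gaussian weight on the sharp parabolic scale $\sigma\sim|y|^{-2}$ --- matches the paper's description (``space-time rescalings and parabolic Carleman estimates in the spirit of the elliptic case''), though the paper makes the rescaling explicit: translate $(y,1)$ to the origin, dilate by $|y|$ in space and $|y|^2$ in time so that the potential becomes $\sim|y|^{2}V$, then run a unit-scale parabolic Carleman estimate, with the exponent $2$ and the logarithmic factor emerging mechanically from the size of the parameter needed to absorb the rescaled potential (exactly as the $R^{4/3}\log R$ emerges in the elliptic Bourgain--Kenig case from absorbing $R^{2}V$). Folding the rescaling into the choice of $\sigma$, as you propose, is conceptually equivalent, but the explicit change of variables is what makes the dependence $R_0(\delta,M,N,n)$ and the exponent transparent.

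Two steps in your plan need repair. First, pointwise interior bounds on $\nabla u$ do not let you ``absorb $W\cdot\nabla u$ as a lower-order perturbation'' nor reduce to the equation $\partial_t u-\Delta u+Vu=0$: there is no pointwise lower bound on $|u|$, so $W\cdot\nabla u$ cannot be rewritten as $\tilde V u$, and a bounded inhomogeneity, once multiplied by the exponentially large weight $e^{\alpha\phi}$, is not controlled by the left-hand side $\alpha\int e^{2\alpha\phi}|f|^2$ of your stated inequality. The correct mechanism is a Carleman estimate that also controls $\alpha^{1/2}\lVert e^{\alpha\phi}\nabla f\rVert$ on the left; then $\lVert e^{\alpha\phi}\theta W\cdot\nabla u\rVert$ is absorbed for $\alpha\gtrsim N^2$. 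Your stated Carleman inequality has no gradient term on the left, so the absorption step as written does not close. Second, you do not explain how the hypothesis $\lVert u(\cdot,1)\rVert_{L^2(B(0,1))}\geq\delta$ enters. With $\phi$ peaked at $(y,1)$, the factor $e^{2\alpha\phi}$ near $B(0,1)$ is of size $e^{-c\alpha|y|^2}$, so in a single Carleman application the known data contributes an exponentially small term, not a dominant one, and the inequality does not give a useful lower bound by itself. As in the elliptic Bourgain--Kenig argument, the known normalization has to be fed in through a doubling or three-region iteration performed after the translation and rescaling, not through a one-shot Carleman estimate; your plan as sketched omits this iterative structure, which is where the lower bound actually arises. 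Your remark about the half-space version --- choosing $\phi$ to vanish on $\partial\R^n_+$ so that the integrations by parts produce no uncontrolled boundary terms --- is precisely the Escauriaza--Seregin--\v Sver\'ak device and is correct.
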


The proof of this result uses space-time rescalings and parabolic Carleman estimates, in the spirit of the elliptic case. It holds for both real and complex solutions. We hope that this result will prove useful in control theory.

We now turn our attention to dispersive equations. Ler us consider non-linear Schr\"odinger equations of the form
\begin{displaymath}
i\partial_t u+\triangle u+F(u,\overline u)u=0,\quad\mathrm{in}\;\R^n\times[0,1],
\end{displaymath}
for suitable non-linearity $F$, and let us try to understand  what (if any) is the analog of the parabolic result we have just explained. The first obstacle is that the Schr\"odinger equations are time reversible and so ``backward'' makes no sense here. As is usual for uniqueness questions, we consider linear Schr\"odinger equations of the form
\begin{displaymath}
i\partial_t u+\triangle u+Vu=0,\quad\mathrm{in}\;\R^n\times[0,1],
\end{displaymath}
and deal with suitable $V(x,t)$ so that we can, in the end, set $$V(x,t)=F(u(x,t),\overline u(x,t)).$$ In order to motivate our work, I will first recall the following version of Heisenberg's uncertainty principle, due to Hardy, \cite{SS}: if $f:\R\to\mathbb{C}$, and we have 
$f(x)=\mathcal{O}(e^{-\pi Ax^2})$ and $\hat f(\xi)=\mathcal{O}(e^{-\pi B\xi^2})$,
 $A,B>0$, if $A\cdot B>1$, then $f\equiv0$. For instance, if 
\begin{displaymath}
|f(x)|\leq C_\epsilon\exp(-C_\epsilon |x|^{2+\epsilon}),\quad |\hat f(\xi)|\leq C_\epsilon\exp(-C_\epsilon |\xi|^{2+\epsilon}), 
\end{displaymath}
then 
$f\equiv0$. This can easily be translated into an equivalent formulation for solutions to the free Schr\"odinger equation. For, if $v$ solves 
\begin{displaymath}
i\partial_tv+\partial_x^2v=0\quad \text{in } \R\times[0,1],
\end{displaymath}
 with $v(x,0)=v_0(x)$, then 
\begin{displaymath}
v(x,t)=\frac{C}{\sqrt{t}}\int e^{i|x-y|/4t}v_0(y)dy,
\end{displaymath}
so that
\begin{displaymath}
v(x,1)=Ce^{i|x|^2/4}\int e^{-ixy/2}e^{i|y|^2/4}v_0(y)dy.
\end{displaymath}
If we then apply the corollary to Hardy's uncertainty principle to $f(y)=e^{iy^2/4}v_0(y)$, we se that if 
\begin{displaymath}
|v(x,0)|\leq C_\epsilon\exp(-C_\epsilon |x|^{2+\epsilon})\quad\text{and}\quad|v(x,1)|\leq C_\epsilon\exp(-C_\epsilon |x|^{2+\epsilon}),
\end{displaymath}
we must have $v\equiv0$. Thus, for time-reversible equations, the analog of backward uniqueness should be ``uniqueness from behavior at two different times''. Thus, we are interested in such results with ``data eventually 0'' or even with ``decaying very fast data''. This kind of uniqueness question for ``data eventually 0'' has been studied for some time. For the 1-d cubic Schr\"odinger equation
\begin{displaymath}
i\partial_tu+\partial_x^2u\mp|u|^2u=0\quad\text{in}\quad\R\times[0,1],
\end{displaymath}
B.Y. Zhang (\cite{Z2}) showed that if $u\equiv0$ on $(-\infty,a]\times\{0,1\}$ or on $[a,+\infty)\times\{0,1\}$, $a\in\R$, then $u\equiv0$ on 
$\R\times[0,1]$. His proof used inverse scattering, a non-linear Fourier transform, and analyticity. In 2002, \cite{KPV3} did away with scattering and analyticity, proving corresponding results for solutions to 
\begin{displaymath}
i\partial_tu+\triangle u+V(x,t)u=0\quad\text{in}\quad\R^n\times[0,1],\quad n\geq1.
\end{displaymath}

\begin{thm}[\cite{KPV3}]\label{thm15}
If $V\in L_t^1L_x^\infty\cap L_{loc}^\infty$ and 
\begin{displaymath}
||V||_{L_t^1L^\infty(|x|>R)}\xrightarrow[R\to\infty]{}0
\end{displaymath}
and there exists a strictly convex cone $\Gamma\subset\R^n$ and a $y_0\in\R^n$ such that
\begin{displaymath}
\supp u(\cdot,0)\subset y_0+\Gamma,\quad\quad\supp u(\cdot,1)\subset y_0+\Gamma,
\end{displaymath}
then we must have $u\equiv0$ on $\R^n\times[0,1]$.
\end{thm}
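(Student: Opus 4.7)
The approach is a Carleman-estimate argument for the Schr\"odinger operator $P=i\partial_t+\Delta$, exploiting the strict convexity of $\Gamma$ in an essential way. After translation we may assume $y_0=0$. Strict convexity of $\Gamma$ produces an open set $\Omega\subset S^{n-1}$ of supporting directions: for each $e\in\Omega$ there is $\delta(e)>0$ with $\Gamma\subset\{x:x\cdot e\ge\delta(e)|x|\}$. Consequently, for every such $e$, both $u(\cdot,0)$ and $u(\cdot,1)$ vanish on the half-space $\{x\cdot e<0\}$. Assuming $u\not\equiv 0$, the plan is to show by a Carleman argument that $u$ must then also vanish on $\{x\cdot e<0\}\cap\{|x|>R\}$ for all intermediate times $t\in[0,1]$, and finally sweep $e$ through $\Omega$ to cover a neighborhood of infinity.

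The main tool is a Carleman inequality of the form
\begin{displaymath}
\tau\,\|e^{\tau\phi}g\|_{L^2(\R^n\times[0,1])}\le C\,\|e^{\tau\phi}Pg\|_{L^2(\R^n\times[0,1])},\qquad g\in C_c^\infty(\R^n\times(0,1)),
\end{displaymath}
valid for $\tau$ large, where the weight takes the form $\phi(x,t)=\lambda\,x\cdot e+h(x,t)$ with $h$ a quadratic-in-$x$, time-dependent correction vanishing at $t=0,1$. The role of $h$ is to secure the H\"ormander-type pseudoconvexity condition for the Schr\"odinger symbol $-\tau-|\xi|^2$, while the linear piece $\lambda\,x\cdot e$ biases $e^{\tau\phi}$ to be exponentially large in the region where a contradiction is sought and exponentially small in the half-space where $u$ is already known to vanish.

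I would apply this inequality to $g=\chi u$, with $\chi$ a smooth cutoff supported in $\{|x|>R\}$ and $R$ chosen large enough that $\|V\|_{L^1_tL^\infty_x(|x|>R)}<\varepsilon$. Since $Pu=-Vu$, one has $Pg=-\chi Vu+[P,\chi]u$; the first term is bounded by $\varepsilon\|e^{\tau\phi}g\|_{L^2}$ and is absorbed on the left for $\tau$ large, while the commutator $[P,\chi]u$ is localized in a thin shell near $|x|=R$ and contributes a fixed-size error. The boundary contributions at $t=0,1$ vanish thanks to the support hypothesis on $u(\cdot,0)$ and $u(\cdot,1)$. Sending the bias parameter $\lambda\to+\infty$ then forces $u\equiv 0$ on $\{x\cdot e<0\}\cap\{|x|>R\}$ for all $t\in[0,1]$. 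Varying $e$ over the open set $\Omega$ covers the exterior of a ball; classical local unique continuation for $P+V$ with $V\in L_{loc}^\infty$ then yields $u\equiv 0$ on $\R^n\times[0,1]$.

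The chief obstacle is the construction of $\phi$. Because the Schr\"odinger operator lacks ellipticity in the time direction, its natural pseudoconvex weights are quadratic in $x$ with time-dependent coefficients; grafting a useful linear bias $\lambda\,x\cdot e$ onto such a weight without destroying pseudoconvexity, and without producing boundary terms at $t=0,1$ that are uncontrolled by the support hypothesis, requires delicate tuning of the parameters and of $h$. Arranging the smallness of $V$ at spatial infinity to be exactly what the absorption step needs, and controlling the cutoff commutators uniformly as $\tau\to\infty$, are the central technical points.
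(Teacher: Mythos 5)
Your proposal is a Carleman-estimate strategy, which is not the route taken in \cite{KPV3}: the key tool there (reproduced in the paper as Lemma~\ref{lem17}) is a weighted $L^2$ energy estimate in $L^2(e^{2\beta x_1}dx)$ with a constant \emph{independent of} $\beta$, obtained by truncating the weight $2\beta x_1$ and passing to the limit via an extra parameter. That estimate is applied with $\beta\to\infty$ to propagate the half-space support of $u(\cdot,0)$ and $u(\cdot,1)$ to all intermediate times, and the strict convexity of $\Gamma$ then enters geometrically. So, independently of correctness, this is a genuinely different attempted route.

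Beyond that, the proposal as written has several substantive gaps.

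First, the sign of the linear bias is inconsistent with the Carleman mechanism. To force $u$ to vanish on the target set by letting a large parameter go to infinity, the weight $e^{\tau\phi}$ must be \emph{large} there and small where the commutator errors live. You place the target set in $\{x\cdot e<0\}\cap\{|x|>R\}$, but with $\phi=\lambda\,x\cdot e+h$ the weight is exponentially \emph{small} on $\{x\cdot e<0\}$. As described, the inequality gives you no information on that set; if you flip the sign, you run into the next problem.

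Second, the assertion that ``the boundary contributions at $t=0,1$ vanish thanks to the support hypothesis'' is not correct. With $h(\cdot,0)=h(\cdot,1)=0$ the weight near $t=0,1$ is $e^{\tau\lambda\,x\cdot e}$ (or $e^{-\tau\lambda\,x\cdot e}$ after the sign flip), and $u(\cdot,0),u(\cdot,1)$ are supported in $y_0+\Gamma\subset\{x\cdot e\ge \delta|x|\}$, which is exactly where that weight is exponentially \emph{large} (resp.\ where, after the flip, the time-cutoff commutator lives and $u(\cdot,t)$ is not known to vanish). Either way, the boundary/time-cutoff terms are not controlled by the hypotheses. This is precisely the obstruction that Lemma~\ref{lem17} was designed to circumvent: one cannot run an a priori weighted estimate because one is not free to prescribe both $u_0$ and $u_1$, so the weight must be truncated and a limit taken.

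Third, even if the Carleman step were repaired, sweeping $e$ over the interior of the dual cone yields only $\supp u(\cdot,t)\subset\overline{\Gamma}\cup B_R$ for all $t$ — persistence of support in a cone, not vanishing. The closing appeal to ``classical local unique continuation'' to cross the unbounded, \emph{flat} boundary of the cone is unjustified: flat hypersurfaces are not strongly pseudoconvex for $i\partial_t+\Delta$, and propagating vanishing across a half-space for Schr\"odinger with an $L^\infty$ potential is itself a theorem of the same depth as what you set out to prove (cf.\ the half-space case of \cite{IK} mentioned right after Theorem~\ref{thm15}). Some additional idea is required to go from ``support contained in a translated cone for all $t$'' to $u\equiv0$, and none is supplied.

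In short, the weight geometry, the treatment of the boundary terms, and the final step from cone-support to vanishing all need to be reworked; the mechanism that actually makes this theorem go is the $\beta$-uniform weighted energy estimate of Lemma~\ref{lem17}, not a straight Carleman inequality.
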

Clearly, taking $V(x,t)=|u|^2(x,t)$, we recover Zhang's result mentioned above.
This was extended by \cite{IK} who considered more general potentials $V$ and the case when $\Gamma=\R^n_+$. For instance, if 
$V\in L^{\frac{n+2}{2}}(\R^n\times[0,1])$ or even $V\in L_t^pL_x^q(\R^n\times[0,1])$ with $2/p+n/q\leq2$, $1<p<\infty$ 
($n=1$, $1<p<2$) or $V\in C([0,1];L^{n/2}(\R^n))$ $n\geq3$, the result holds with $\Gamma$ a half-plane. Our extension of Hardy's uncertainty principle, to this context, now is:

\begin{thm}[\cite{EKPV2}]\label{thm16}
Let $u$ be a solution of 
\begin{displaymath}
i\partial_t u+\triangle u+Vu=0,\quad\mathrm{in}\;\R^n\times[0,1].
\end{displaymath}
Assume that $V\in L^\infty(\R^n\times[0,1])$, $\nabla_x V\in L_t^1([0,1];L^\infty_x(\R^n))$ and 
\begin{displaymath}
\lim_{R\uparrow\infty}||V||_{L_t^1L^\infty(|x|>R)}=0.
\end{displaymath}
 If there exists $\alpha>2$, $a>0$, such that 
$u(\cdot,0),u(\cdot,1)\in H^1(e^{a|x|^\alpha}dx)$, then  $u\equiv0$.
\end{thm}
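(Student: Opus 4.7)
My plan is to implement the logarithmic convexity of Gaussian $L^2$-means scheme advertised in the paper's title, in three main steps.

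\textbf{Endpoint finiteness.} For each $\gamma>0$, introduce
\begin{equation*}
H_\gamma(t)=\int_{\R^n}|u(x,t)|^2\,e^{2\gamma|x|^2}\,dx
\end{equation*}
and the analogous $H^1$ quantity incorporating $\nabla u$. The hypothesis $\alpha>2$ enters crucially: since $2\gamma|x|^2-a|x|^\alpha\to-\infty$ as $|x|\to\infty$, we have $e^{2\gamma|x|^2}\leq C_{\gamma,a,\alpha}\,e^{a|x|^\alpha}$ outside a compact set, whence the assumption on $u(\cdot,0)$ and $u(\cdot,1)$ gives $H_\gamma(0),H_\gamma(1)<\infty$ for \emph{every} $\gamma>0$.

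\textbf{Key convexity lemma.} Let $\phi(x,t)$ be a Gaussian weight profile of the form $\phi(x,t)=\gamma(t)|x|^2$, with $\gamma(t)$ tuned to the pseudoconformal symmetry of the free Schr\"odinger propagator so that $\gamma(0)=\gamma(1)=\gamma$. Setting $f=e^{\phi}u$, decompose the conjugated Schr\"odinger operator
\begin{equation*}
e^{\phi}(\partial_t-i\Delta+iV)e^{-\phi}=\mathcal{S}+\mathcal{A}+\mathcal{E}_V,
\end{equation*}
with $\mathcal{S}$ symmetric, $\mathcal{A}$ skew-adjoint, and $\mathcal{E}_V$ collecting the potential contribution. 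A standard positive-commutator computation reduces $\partial_t^2\log H_\gamma(t)$ to $\langle[\mathcal{S},\mathcal{A}]f,f\rangle$ modulo errors; for the right choice of profile the commutator is nonnegative---it is the pseudoconformal virial identity in disguise---and the assumptions $V\in L^\infty$, $\nabla V\in L^1_tL^\infty_x$, $\|V\|_{L^1_tL^\infty(|x|>R)}\to0$ are calibrated exactly so as to bound $\mathcal{E}_V$ by a constant multiple of $H_\gamma(t)$ plus lower order terms involving $\nabla u$. The outcome is a three-lines inequality
\begin{equation*}
H_\gamma(t)\leq C_V\,H_\gamma(0)^{1-t}H_\gamma(1)^{t},\qquad t\in[0,1],
\end{equation*}
with $C_V$ independent of $\gamma$.

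\textbf{Conclusion.} The three-lines bound, together with the endpoint finiteness for every $\gamma$, yields arbitrarily strong Gaussian-weighted control of $u(\cdot,t_0)$ at any interior time $t_0\in(0,1)$. One then invokes a quantitative uniqueness estimate for Schr\"odinger equations with bounded potential---proved via an auxiliary Carleman estimate on a thin interior slab---that is incompatible with $u(\cdot,t_0)$ lying in $L^2(e^{2\gamma|x|^2}\,dx)$ for \emph{every} $\gamma>0$ unless $u\equiv0$; forward-backward well-posedness for the Schr\"odinger equation with bounded potential then propagates this to $u\equiv0$ throughout $\R^n\times[0,1]$. The principal obstacle throughout is the convexity lemma: the profile $\phi$ must be sharp enough for $[\mathcal{S},\mathcal{A}]$ to remain positive as $\gamma\to\infty$ while the error $\mathcal{E}_V$ stays controllable, and the precise assumptions on $V$ and $\nabla V$ in the statement are exactly what is needed to balance these competing requirements.
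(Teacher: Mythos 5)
Your proposal takes a genuinely different route from the paper's proof of Theorem~\ref{thm16}. The paper's argument (sketched in Section~1) is a two-ingredient scheme in the spirit of the elliptic and parabolic cases: an \emph{upper bound}, namely Lemma~\ref{lem17}~$\Rightarrow$~Corollary~\ref{cor18}, obtained by cutting off $u$ outside $|x|\sim R$ and applying the $e^{2\beta x_1}$-weighted estimate with $\beta=\gamma R^{\alpha-1}$, which yields $\sup_{0<t<1}\int_{|x|>C_\alpha}e^{b|x|^\alpha}|u(\cdot,t)|^2\,dx<\infty$ (the hypothesis $\nabla_x V\in L^1_tL^\infty_x$ enters because the same argument must be run for $\nabla u$); and a \emph{lower bound}, Theorem~\ref{thm19}, proved via the Carleman estimate Lemma~\ref{lem110}, asserting that if $u\not\equiv0$ then $\delta(R)\geq c_n e^{-c_n R^2}$. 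The quadratic exponent in the lower bound is incompatible with the persistence of $e^{b|x|^\alpha}$ decay when $\alpha>2$, giving $u\equiv0$. No logarithmic convexity appears. Your route---log convexity of Gaussian means---is the machinery of Sections~2--4, which the paper develops to prove the sharper Theorem~\ref{thm3} ($\alpha\beta<2$, the near-critical Hardy regime). Your Step~1 observation is correct and indeed shows Theorem~\ref{thm3} subsumes Theorem~\ref{thm16} (in fact without needing $\nabla_x V$ at all), so the overall strategy is sound.

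The genuine gap is in your Step~2 and, to a lesser extent, Step~3. You treat the positive-commutator computation and the resulting three-lines bound, with $C_V$ independent of $\gamma$, as a ``standard'' derivation. But the paper goes out of its way to exhibit, in the paragraph \textbf{A misleading convexity argument} at the end of Section~2, that a formal log-convexity computation of exactly the type you sketch can yield demonstrably false conclusions: a formally ``convex'' $H_{a_R}$ would force a nonzero free solution to vanish. Making the convexity rigorous is the entire content of Sections~2--3: one regularizes to $(a+ib)(\triangle+V)$ with $a>0$, replaces $|x|^2$ by a mollified $|x|^{2-\epsilon}$ where the formal computation is justified, passes to the limit $\epsilon,\rho\to0$; for $V=V(x)$ real one then uses spectral theory to relate $u$ and $u_a=e^{atH}u(t)$ so that $a\to0$ can be taken; and for complex time-dependent $V(x,t)$ the commutator argument is abandoned entirely in favor of the $e^{\vec\lambda\cdot x}$ estimate of Lemma~\ref{lemA} averaged against a Gaussian in $\vec\lambda$. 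Your Step~3 likewise asserts, but does not supply, the interior Carleman estimate (the lemma in Section~4 with the profile $\phi(t)=t(1-t)$, $\psi(t)=-(1+\epsilon)\tfrac{R^4}{16\mu}t(1-t)$) and the accompanying bookkeeping that turns Gaussian control for all $\gamma>1/2$ into $u\equiv0$. In short, your outline describes a valid alternative to the paper's proof of Theorem~\ref{thm16}, but the two places where you write ``a standard positive-commutator computation'' and ``one then invokes a quantitative uniqueness estimate'' are exactly where the real work---and the pitfall the paper explicitly flags---resides.
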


It is conjectured that Theorem \ref{thm16} remains valid assuming only that $u$, $\nabla u$ at times $0$, $1$ are in 
$L^2\left((y_0+\Gamma),e^{a|x|^\alpha}dx\right)$, with $y_0+\Gamma$ as in Theorem \ref{thm15}. This extension of Theorem \ref{thm16} would clearly imply Theorem \ref{thm15}.

Let me sketch the prof of this result. Our starting point is:

\begin{lem}[\cite{KPV3}]\label{lem17}
$\exists\epsilon>0$ s.t. if $||V||_{L_t^1L_x^\infty}\leq\epsilon$ and $u$ solves 
\begin{displaymath}
i\partial_t u+\triangle u+Vu=H,\quad\mathrm{in}\;\R^n\times[0,1],
\end{displaymath}
and $u_0(x)=u(x,0)$, $u_1(x)=u(x,1)$ belong to $L^2(e^{2\beta x_1}dx)\cap L^2(dx)$
and $$H\in L_t^1(L^2(e^{2\beta x_1}dx)\cap L^2(dx)),$$ then $$u\in C([0,1]; L^2(e^{2\beta x_1}dx))$$ and
\begin{multline*}
\sup_{0\leq t\leq1}||u(\cdot,t)||_{L^2(e^{2\beta x_1}dx)}\leq\\\leq
C\left\{||u_0||_{L^2(e^{2\beta x_1}dx)}+||u_1||_{L^2(e^{2\beta x_1}dx)}+||H||_{L_t^1L^2(e^{2\beta x_1}dx)}\right\}
\end{multline*}
with $C$ independent of $\beta$.
\end{lem}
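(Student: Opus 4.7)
The plan is to conjugate the weight $e^{\beta x_1}$ into the equation, analyze the resulting constant-coefficient operator by Fourier methods, and absorb the potential using its smallness. Set $f(x,t)=e^{\beta x_1}u(x,t)$, so that $\|u(\cdot,t)\|_{L^2(e^{2\beta x_1}dx)}=\|f(\cdot,t)\|_{L^2_x}$ and a direct computation shows that $f$ satisfies
\begin{equation*}
i\partial_t f+\triangle f-2\beta\partial_{x_1}f+\beta^2 f+Vf=e^{\beta x_1}H.
\end{equation*}
Writing $F=-Vf+e^{\beta x_1}H$ and taking the spatial Fourier transform, $\hat f(\xi,\cdot)$ satisfies the scalar ODE
\begin{equation*}
\partial_t\hat f(\xi,t)=\lambda(\xi)\hat f(\xi,t)-i\hat F(\xi,t),\qquad \lambda(\xi)=2\beta\xi_1+i(\beta^2-|\xi|^2).
\end{equation*}
The decisive observation is that $\re\lambda(\xi)=2\beta\xi_1$ changes sign exactly with $\xi_1$: forward Duhamel from $t=0$ produces a propagator of modulus at most one on $[0,1]$ precisely when $\xi_1\leq 0$, while backward Duhamel from $t=1$ is bounded precisely when $\xi_1\geq 0$.

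For $\xi_1\geq 0$ the backward representation
\begin{equation*}
\hat f(\xi,t)=e^{\lambda(\xi)(t-1)}\hat f(\xi,1)+i\int_t^1 e^{\lambda(\xi)(t-s)}\hat F(\xi,s)\,ds
\end{equation*}
gives $|\hat f(\xi,t)|\leq|\hat f(\xi,1)|+\int_0^1|\hat F(\xi,s)|\,ds$, since $|e^{\lambda(\xi)(t-1)}|=e^{2\beta\xi_1(t-1)}\leq 1$ for $t\in[0,1]$ and $|e^{\lambda(\xi)(t-s)}|\leq 1$ for $s\geq t$; for $\xi_1<0$ the forward representation gives the analogous pointwise bound with $\hat f(\xi,0)$. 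Combining these estimates on the disjoint half-spaces $\{\xi_1\geq 0\}$ and $\{\xi_1<0\}$ and applying Minkowski's integral inequality in $L^2_\xi$ together with Plancherel, one obtains the free estimate
\begin{equation*}
\sup_{0\leq t\leq 1}\|f(\cdot,t)\|_{L^2_x}\leq\|f(\cdot,0)\|_{L^2_x}+\|f(\cdot,1)\|_{L^2_x}+\|F\|_{L^1_tL^2_x}.
\end{equation*}
Recalling $F=-Vf+e^{\beta x_1}H$ and bounding $\|Vf\|_{L^1_tL^2_x}\leq\|V\|_{L^1_tL^\infty_x}\sup_t\|f(\cdot,t)\|_{L^2_x}$, the smallness $\|V\|_{L^1_tL^\infty_x}\leq\epsilon<1$ lets one absorb this term into the left-hand side, yielding a constant $(1-\epsilon)^{-1}$ independent of $\beta$, which is the assertion of the lemma.

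The main technical obstacle is that $f\in L^\infty_t L^2_x$ is not directly given by the hypotheses, and without this a priori control the Fourier manipulation above is not literal. The standard remedy is a density/approximation argument: prove the inequality first for smooth, rapidly decaying data where all norms are manifestly finite and every step is justified, then pass to the limit using the uniformity in all approximation parameters of the resulting bound. The continuity statement $f\in C([0,1];L^2_x)$ is obtained along the same lines. Once this step is in place, the Fourier computation above delivers the desired $\beta$-independent estimate cleanly.
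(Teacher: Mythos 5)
Your Fourier computation is correct, and the frequency dichotomy at $\xi_1=0$ (forward Duhamel where $\re\lambda\leq 0$, backward Duhamel where $\re\lambda\geq 0$) is exactly the heuristic the paper describes for this lemma: ``if we a priori knew that $u\in C([0,1];L^2(e^{2\beta x_1}dx))$, a variant of the energy method, splitting frequencies into $\xi_1>0$, $\xi_1<0$, gives the result.'' The absorption of $Vf$ using $\|V\|_{L^1_tL^\infty_x}\leq\epsilon<1$, with a constant $(1-\epsilon)^{-1}$ independent of $\beta$, is also right.

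The gap is in your proposed justification. You cannot repair the missing a priori bound by a ``standard density/approximation argument'' of the kind you sketch. The data $u_0$ and $u_1$ are not independent objects you are free to approximate separately: they are both traces of the same evolution. If you mollify or truncate $u_0$ and solve forward, the resulting solution at time $1$ has no reason whatsoever to lie in $L^2(e^{2\beta x_1}dx)$ — the Schr\"odinger flow does not propagate one-sided exponential weights, and indeed if it did the lemma would be trivial. Likewise you cannot approximate the solution $u$ directly in a way that preserves the equation and controls both endpoints. This is precisely the obstruction the paper flags explicitly (``since we are not free to prescribe both $u_0$, $u_1$, we cannot use a priori estimates''), and the actual remedy in \cite{KPV3} is different: one replaces the linear weight $\beta x_1$ by a \emph{truncated} (bounded) weight, for which the corresponding weighted norms of $u(t)$ are automatically finite at every $t$ because $u\in C([0,1];L^2)$. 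One then runs a version of the energy/Fourier argument that tolerates the error terms produced by the nonconstant truncated weight — this requires the ``extra parameter'' the paper mentions — derives a bound uniform in the truncation, and passes to the limit. This is the genuine content of the lemma, and your write-up does not supply it.
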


This is a delicate lemma. If we a priori knew that $u\in C([0,1]; L^2(e^{2\beta x_1}dx))$, a variant of the energy method, splitting frequencies into $\xi_1>0$, $\xi_1<0$, gives the result. But, since we are not free to prescribe both $u_0$, $u_1$, we cannot use a priori estimates. This is instead accomplished by ``truncating'' the weight $2\beta x_1$ and introducing an extra parameter.

Or next step is to deduce, from  Lemma \ref{lem17}, further weighted estimates:

\begin{cor}\label{cor18}
Assume that we are under the hypothesis of  Lemma \ref{lem17} and for some $a>0$, $\alpha>1$, 
\begin{displaymath}
u_0,u_1\in L^2(e^{a|x|^\alpha}dx),\quad
H\in L_t^1L_x^2(e^{a|x|^\alpha}dx). 
\end{displaymath}
Then $\exists C_\alpha>0$, $b>0$ s.t.
\begin{displaymath}
\sup_{0<t<1}\int_{|x|>C_\alpha}e^{b|x|^\alpha}|u(x,t)|^2dx<\infty.
\end{displaymath}
\end{cor}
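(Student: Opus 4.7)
The plan is to apply Lemma \ref{lem17} to the family of linear weights $e^{2\beta e\cdot x}$ parametrized by the direction $e\in S^{n-1}$, and then convert these into the desired power weight via Young's inequality. The crucial feature of Lemma \ref{lem17} is the independence of its constant from $\beta$, which permits optimization in $\beta$ at the end; this $\beta$-uniformity (already the hard point of Lemma \ref{lem17}) is what the whole argument rides on.

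Let $\alpha'=\alpha/(\alpha-1)$. Young's inequality gives, for every $r\ge 0$ and $\beta\ge 0$,
\begin{equation*}
2\beta r\le a\, r^\alpha+C(a,\alpha)\,\beta^{\alpha'},
\end{equation*}
so that $e^{2\beta\,e\cdot x}\le e^{C\beta^{\alpha'}}\,e^{a|x|^\alpha}$ pointwise in $x$. Writing
\begin{equation*}
K=||u_0||_{L^2(e^{a|x|^\alpha}dx)}+||u_1||_{L^2(e^{a|x|^\alpha}dx)}+||H||_{L_t^1L_x^2(e^{a|x|^\alpha}dx)},
\end{equation*}
which is finite by hypothesis, and rotating $e$ to $e_1$ (the Laplacian, $|x|$, and the hypotheses on $V$ and $H$ are all rotation invariant), Lemma \ref{lem17} yields
\begin{equation*}
\sup_{0\le t\le 1}||u(\cdot,t)||_{L^2(e^{2\beta e\cdot x}dx)}\le C K\,e^{C\beta^{\alpha'}/2},
\end{equation*}
uniformly in $\beta\ge 0$ and $e\in S^{n-1}$.

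Chebyshev on the half-space $\{e\cdot x>R\}$ then gives
\begin{equation*}
\int_{\{e\cdot x>R\}}|u(x,t)|^2\,dx\le C K^2\,e^{C\beta^{\alpha'}-2\beta R},
\end{equation*}
and minimizing in $\beta$ (the optimum is at $\beta\sim R^{\alpha-1}$) produces a bound of the form $C K^2 e^{-cR^\alpha}$ for some $c=c(a,\alpha)>0$. Covering $\{|x|>R\}$ by the $2n$ half-spaces $\{\pm x_j>R/\sqrt{n}\}$ and summing, one obtains
\begin{equation*}
\sup_{0\le t\le 1}\int_{|x|>R}|u(x,t)|^2\,dx\le C'K^2\,e^{-c'R^\alpha}.
\end{equation*}
For any $0<b<c'$, combining this tail estimate with the layer-cake identity $e^{b|x|^\alpha}=1+b\alpha\int_0^{|x|}r^{\alpha-1}e^{br^\alpha}dr$ and Fubini yields the asserted finiteness of $\int_{|x|>C_\alpha}e^{b|x|^\alpha}|u(x,t)|^2\,dx$ uniformly in $t$, for any $C_\alpha>0$. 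Thus, once the $\beta$-uniform estimate of Lemma \ref{lem17} is in hand, the proof of Corollary \ref{cor18} is essentially a Legendre transform / Young inequality argument, with the only genuinely analytic input being the tail decay extracted by Chebyshev.
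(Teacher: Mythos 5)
Your argument is correct, and it takes a genuinely different technical route from the paper's. The paper's sketch is to truncate: set $u_R=\eta(x/R)u$ with $\eta$ vanishing near the origin, apply Lemma \ref{lem17} to $u_R$ at the single critical scale $\beta=\gamma R^{\alpha-1}$, and recover the power weight from the support condition; the cutoff produces commutator errors involving $u$ and $\nabla u$ on the annulus $\{R\le|x|\le 2R\}$ that must be absorbed into the inhomogeneous term $H$. You instead leave $u$ untouched, apply Lemma \ref{lem17} to $u$ itself across the whole one-parameter family of linear weights (the hypotheses being verified by the Young bound $2\beta r\le a r^\alpha+C\beta^{\alpha'}$, together with $e^{a|x|^\alpha}\ge 1$, so that $L^2(e^{a|x|^\alpha}dx)\subset L^2(e^{2\beta e\cdot x}dx)\cap L^2$), and then rebuild the power weight by Chebyshev, Legendre duality, a half-space covering, and layer-cake. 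Both proofs are driven by the same two ingredients—the $\beta$-independence of the constant in Lemma \ref{lem17} and the rotation invariance of the setup—and both land on the scale $\beta\sim R^{\alpha-1}$; but your version is structurally cleaner, since it needs no cutoff, no commutator bookkeeping, and in particular no control of $\nabla u$. The one small imprecision is that the tail estimate from the minimization may only be available for $R$ large; this is harmless, since the contribution from bounded $r$ in the layer-cake integral is finite by the $L^2$ bound on $u(t)$, and the statement in any case only asks for the integral over $\{|x|>C_\alpha\}$.
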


Idea for the proof of Corollary \ref{cor18}: Multiply $u$ by $\eta_R(x)=\eta(x/R)$, $\eta\equiv0$ for $|x|\leq1$, $\eta\equiv1$ for $|x|>2$. We apply Lemma \ref{lem17} to $u_R(x,t)=\eta(x/R)u(x,t)$, with $\beta=\gamma R^{\alpha-1}$, for suitable $\gamma$ and the corollary follows.

The next step of the proof is to deduce lower bounds for $L^2$ space-time integrals, in analogy with the elliptic and parabolic arguments. These are ``quantitative''.

\begin{thm}\label{thm19}
Let $u$ solve
$
i\partial_t u+\triangle u+Vu=0,$ $x\in\R^n,t\in[0,1].
$
Assume that
\begin{displaymath}
\int_0^1\int_{\R^n}|u|^2+|\nabla u|^2\leq A,\quad\text{and that}\quad
\int_{\frac{1}{2}-\frac{1}{8}}^{\frac{1}{2}+\frac{1}{8}}\int_{|x|<1}|u|^2dxdt\geq1,
\end{displaymath}
with $||V||_\infty\leq L$. Then there exists $R_0=R_0(A,L,n)>0$ and $c_n$ s.t. if $R>R_0$
\begin{displaymath}
\delta(R)=\left(\int_0^1\int_{R-1\leq|x|\leq R}(|u|^2+|\nabla u|^2)dxdt\right)^\frac{1}{2}\geq c_ne^{-c_nR^2}.
\end{displaymath}
\end{thm}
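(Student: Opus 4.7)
My plan is to argue by contradiction through spatial localization plus a Carleman estimate for the Schr\"odinger operator $P=i\partial_t+\triangle$ with a Gaussian weight. Suppose toward contradiction that $\delta(R)<c_ne^{-c_nR^2}$ for a small $c_n>0$ to be chosen. Pick a cutoff $\chi_R\in C_0^\infty(\R^n)$ with $\chi_R\equiv 1$ on $\{|x|\leq R-1\}$, $\chi_R\equiv 0$ on $\{|x|\geq R\}$, and $|\nabla\chi_R|+|\triangle\chi_R|\leq C_n$, and set $v=\chi_R u$. Then $v$ is supported in $\{|x|\leq R\}\times[0,1]$, $\|v\|_{H^1_{t,x}}\leq CA^{1/2}$, and
\begin{equation*}
Pv+Vv=G,\qquad G=2\nabla\chi_R\cdot\nabla u+(\triangle\chi_R)\,u,
\end{equation*}
with $G$ supported in $\{R-1\leq|x|\leq R\}\times[0,1]$ and $\|G\|_{L^2_{t,x}}\leq C_n\delta(R)$. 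Since $\chi_R\equiv 1$ on the central cylinder for $R\geq 2$, the hypothesis passes to $v$: $\int_{3/8}^{5/8}\int_{|x|<1}|v|^2\geq 1$.

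The main analytic step is a Carleman inequality of the form
\begin{equation*}
\lambda\int e^{2\phi_\lambda}|w|^2\,dx\,dt\leq C\int e^{2\phi_\lambda}|Pw|^2\,dx\,dt+C\cdot\bigl(\text{boundary at }t=0,1\bigr),
\end{equation*}
valid for $w$ of compact spatial support and $\lambda\geq\lambda_0(L,n)$, where $\phi_\lambda(x,t)$ is Gaussian in $x$ with a time profile tuned so that the commutator $[S,A]$ of the symmetric/antisymmetric parts of $e^{\phi_\lambda}Pe^{-\phi_\lambda}$ is nonnegative modulo lower-order terms. Following the EKPV template, a convenient choice is $\phi_\lambda(x,t)=-\lambda|x|^2/g(t)$ with $g$ smooth, positive, concentrated near $t=1/2$, and vanishing at $t=0,1$ (so boundary contributions can be made arbitrarily small). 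This weight is exactly the kind underlying the logarithmic convexity of Gaussian means for Schr\"odinger evolutions advertised in the paper's title.

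Applied to $w=v$, the estimate is unpacked by tracking $e^{2\phi_\lambda}$ on three regions: on the central cylinder, $e^{2\phi_\lambda}\geq e^{-c_1\lambda}$, so the left-hand side is $\geq\lambda e^{-c_1\lambda}$ via the hypothesis on $v$; on the support of $G$, $e^{2\phi_\lambda}\leq e^{-c_2\lambda R^2}$, so the $G$ contribution to the right-hand side is $\leq Ce^{-c_2\lambda R^2}\delta(R)^2$; and at $t=0,1$, the boundary piece is $A$ times a factor that can be made arbitrarily small through the profile of $g$. Absorbing the $Vw$ contribution to the right via $\lambda\geq CL^2$, fixing $\lambda$ as a constant depending only on $A$, $L$, and $n$, and rearranging then yields $\delta(R)^2\geq c_n e^{-c_nR^2}$ for $R\geq R_0(A,L,n)$, contradicting the assumption.

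The main obstacle will be establishing the Carleman estimate itself: because the principal symbol of $P=i\partial_t+\triangle$ is imaginary in time, the commutator $[S,A]$ is subtler than in the elliptic or parabolic cases underlying Theorems~1.2 and~1.4, and the precise Gaussian profile of $\phi_\lambda$ — equivalent to the logarithmic convexity of $\int e^{-\lambda|x|^2/g(t)}|u|^2\,dx$ along the Schr\"odinger flow — is what forces the quadratic exponent $R^2$, in parallel with the Gaussian rate $e^{-|x|^2}$ that distinguishes Hardy's uncertainty principle from its weaker variants.
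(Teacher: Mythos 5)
Your overall strategy (localize with a spatial cutoff, plug the cut-off function into a Gaussian-weight Carleman estimate for $i\partial_t+\triangle$, absorb $V$ by taking the parameter large, read off a lower bound from how the weight compares on the central cylinder and the annulus) is the right template, and it is essentially the one the paper uses via Lemma~\ref{lem110}. However, the specific weight you propose has the wrong sign, and this is not a detail that can be fixed in the ``unpacking'' step --- it makes the argument self-contradictory.

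You take $\phi_\lambda(x,t)=-\lambda|x|^2/g(t)$, so $e^{2\phi_\lambda}$ is large near $x=0$ (of order $e^{-c_1\lambda}$ on the unit cylinder) and \emph{tiny} on the annulus $R-1\le|x|\le R$ (of order $e^{-c_2\lambda R^2}$). Feeding this into your inequality $\lambda\int e^{2\phi_\lambda}|v|^2\lesssim\int e^{2\phi_\lambda}|Pv|^2+\text{bdry}$, absorbing $Vv$, making the boundary terms small, and using the nondegeneracy hypothesis on the central cylinder, you would obtain $\lambda e^{-c_1\lambda}\lesssim e^{-c_2\lambda R^2}\delta(R)^2$, i.e.
\begin{equation*}
\delta(R)^2\ \gtrsim\ \lambda\, e^{-c_1\lambda}\, e^{\,c_2\lambda R^2},
\end{equation*}
which diverges as $R\to\infty$ and contradicts the a priori bound $\delta(R)^2\le A$. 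So if the Carleman inequality you posit held with negligible boundary terms, it would force $u\equiv0$, and the conclusion of Theorem~\ref{thm19} would be vacuous; in fact this shows such an inequality cannot hold. The sign is wrong in exactly the way needed for the argument: to extract a lower bound on the small quantity $\delta(R)$ you need the weight to be \emph{larger} on the annulus than on the central cylinder, not smaller.

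What the paper does instead (Lemma~\ref{lem110}) is use the \emph{growing} Gaussian weight $\exp\bigl(\alpha\bigl|\tfrac{x}{R}+\phi(t)\vec e_1\bigr|^2\bigr)$ with a time-dependent translate of the center by $-R\phi(t)\vec e_1$, where $\phi\equiv3$ near $t=\tfrac12$ and $\phi\equiv0$ near $t=0,1$. The commutator $[S_\alpha,A_\alpha]$ is only positive away from the moving center, so one introduces an extra cutoff $\theta\bigl(\tfrac{x}{R}+\phi(t)\vec e_1\bigr)$ excising a neighborhood of it, and applies the estimate to $g=\theta_R\,\theta\bigl(\tfrac{x}{R}+\phi(t)\vec e_1\bigr)u$ with $\alpha\approx R^2$. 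Because $\phi(\tfrac12)=3$, the weight on the central unit ball at $t\approx\tfrac12$ is about $e^{9\alpha}$; the weight on the annulus is at most $e^{16\alpha}$, producing the term $e^{16\alpha}\delta(R)$; the weight on the support of $\nabla\theta$ is at most $e^{4\alpha}$, giving a term $e^{4\alpha}A^{1/2}$ that is dominated by the left side for large $R$. Rearranging gives $\delta(R)\gtrsim e^{-7\alpha}\gtrsim e^{-cR^2}$. Your ``main obstacle'' intuition was correct --- the hard point is the Carleman inequality --- but the resolution is the translated increasing weight plus the second cutoff $\theta$, not a decaying weight with vanishing boundary profile.
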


Clearly, Corollary \ref{cor18} applied to $u$, $\nabla u$, combined with Theorem \ref{thm19} yield our version of Hardy's uncertainty principle.

In order to prove  Theorem \ref{thm19}, we use a Carleman estimate which is a variant of one due to V. Isakov \cite{I}.

\begin{lem}[\cite{EKPV2}]\label{lem110}
Assume that $R>0$ and $\phi:[0,1]\to\R$ is a smooth real function. Then, there exists 
$C=C(n,||\phi'||_\infty,||\phi''||_\infty)>0$ s.t.
\begin{displaymath}
\frac{\alpha^{3/2}}{R^2}\left\lVert e^{\alpha\left|\frac{x}{R}+\phi(t)\vec e_1\right|^2}g\right\rVert_{L^2}\leq
C_n\left\lVert e^{\alpha\left|\frac{x}{R}+\phi(t)\vec e_1\right|^2}(i\partial_t+\triangle)g\right\rVert_{L^2},
\end{displaymath}
for all $\alpha>C_nR^2$, $g\in C_0^\infty(\R^{n+1})$ s.t. $\supp g\subset\{(x,t):\left|\frac{x}{R}+\phi(t)\vec e_1\right|\geq1\}$.
\end{lem}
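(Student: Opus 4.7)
The plan is the standard conjugation-and-commutator argument for Carleman estimates. Set $\varphi(x,t) = \alpha|x/R + \phi(t)\vec{e}_1|^2$ and substitute $f = e^\varphi g$. A direct chain-rule computation of the conjugated operator $P := e^\varphi(i\partial_t + \triangle)e^{-\varphi}$ gives
\[
Pf = i\partial_t f + \triangle f - 2\nabla\varphi\cdot\nabla f + (|\nabla\varphi|^2 - \triangle\varphi)f - i(\partial_t\varphi)f,
\]
and I would decompose $P = \mathcal S + \mathcal A$ into its $L^2(\R^{n+1})$ self-adjoint and skew-adjoint parts. Using that $i\partial_t$, $\triangle$, and multiplication by real functions are self-adjoint, that $i\cdot(\mathrm{real})$ is skew, and that $(\nabla\varphi\cdot\nabla)^{*} = -\nabla\varphi\cdot\nabla - \triangle\varphi$, one obtains
\[
\mathcal S = i\partial_t + \triangle + |\nabla\varphi|^2, \qquad \mathcal A = -2\nabla\varphi\cdot\nabla - \triangle\varphi - i\partial_t\varphi.
\]
Since $\|Pf\|^2 = \|\mathcal Sf\|^2 + \|\mathcal Af\|^2 + ([\mathcal S,\mathcal A]f,f) \geq ([\mathcal S,\mathcal A]f,f)$, the problem reduces to a positive commutator estimate.

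The commutator $[\mathcal S,\mathcal A]$ simplifies drastically because $\varphi$ is quadratic in $x$: one has $D_x^2\varphi = (2\alpha/R^2)I$ and $\triangle\varphi = 2\alpha n/R^2$ is constant, so all terms involving $\nabla\triangle\varphi$ or $\triangle^2\varphi$ vanish. The surviving pieces are $[\triangle,-2\nabla\varphi\cdot\nabla] = -4\,D^2\varphi{:}D^2 = -(8\alpha/R^2)\triangle$, $[|\nabla\varphi|^2,-2\nabla\varphi\cdot\nabla] = 2\nabla\varphi\cdot\nabla|\nabla\varphi|^2 = (8\alpha/R^2)|\nabla\varphi|^2$, $[i\partial_t,-i\partial_t\varphi] = \partial_t^2\varphi$, and $[i\partial_t,-2\nabla\varphi\cdot\nabla] + [\triangle,-i\partial_t\varphi] = -(8\alpha\phi'(t)/R)\,i\partial_1$ (self-adjoint since $i\partial_1$ is). Pairing with $f$ and integrating the Laplacian by parts yields
\[
([\mathcal S,\mathcal A]f,f) = \tfrac{8\alpha}{R^2}\|\nabla f\|^2 + \tfrac{8\alpha}{R^2}\int|\nabla\varphi|^2|f|^2 + \int\partial_t^2\varphi\,|f|^2 - \tfrac{8\alpha}{R}\int\phi'(t)(i\partial_1 f)\,\overline f.
\]

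The decisive positive term is $(8\alpha/R^2)\int|\nabla\varphi|^2|f|^2$: on the support of $g$, the hypothesis $|x/R+\phi\vec e_1|\geq 1$ forces $|\nabla\varphi|^2 = (4\alpha^2/R^2)|x/R+\phi\vec e_1|^2 \geq 4\alpha^2/R^2$, giving $(8\alpha/R^2)\int|\nabla\varphi|^2|f|^2 \geq 32\alpha^3 R^{-4}\|f\|^2$, which is the desired $\alpha^3R^{-4}$ weight. For the errors, I would use $|\partial_t^2\varphi| \leq 2\alpha\|\phi''\|_\infty|x_1/R+\phi| + 2\alpha\|\phi'\|_\infty^2$ together with the identity $|x/R+\phi\vec e_1| = R|\nabla\varphi|/(2\alpha)$, and apply Cauchy--Schwarz/AM--GM to absorb small fixed fractions of $(8\alpha/R^2)\|\nabla f\|^2$ and $(8\alpha/R^2)\||\nabla\varphi|f\|^2$ into the two positive terms. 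What remains is of order $C_n(\|\phi'\|_\infty^2 + \|\phi''\|_\infty^2)(\alpha + R^4/\alpha)\|f\|^2$, and both the $\alpha$ and $R^4/\alpha$ pieces are dominated by $\alpha^3/R^4\|f\|^2$ precisely when $\alpha\geq C_n R^2$—the threshold in the statement. Collecting gives $\|Pf\|^2 \geq c_n\alpha^3 R^{-4}\|f\|^2$, and rewriting in terms of $g = e^{-\varphi}f$ produces the claimed inequality.

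The one subtlety is the bookkeeping of powers of $\alpha$ and $R$: verifying that every error generated by $\phi'$, $\phi''$, and the $(i\partial_1 f, f)$ cross-term can be absorbed into either the $\|\nabla f\|^2$ or the $\||\nabla\varphi|f\|^2$ positive term with enough slack, and that the residual $\|f\|^2$ remainder is exactly controlled by the lower threshold $\alpha\geq C_n R^2$. No individual step is difficult; the difficulty is purely algebraic.
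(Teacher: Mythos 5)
Your proposal is correct and is precisely the argument the paper sketches: conjugate $(i\partial_t+\triangle)$ by $e^{\varphi}$ with $\varphi=\alpha|x/R+\phi(t)\vec e_1|^2$, split into Hermitian part $\mathcal S=i\partial_t+\triangle+|\nabla\varphi|^2$ and anti-Hermitian part $\mathcal A=-2\nabla\varphi\cdot\nabla-\triangle\varphi-i\partial_t\varphi$, and extract the leading positivity from $\frac{8\alpha}{R^2}\int|\nabla\varphi|^2|f|^2\geq\frac{32\alpha^3}{R^4}\|f\|^2$ on the support set, absorbing the $\phi'$, $\phi''$ and cross-term errors for $\alpha\geq C_nR^2$. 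Your commutator bookkeeping also agrees term-by-term with the explicit computation the paper carries out for the closely related Carleman inequality in Section 4 (the case $\psi\equiv 0$).
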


The proof of Lemma \ref{lem110} follows by conjugating the operator $(i\partial_t+\triangle)$ with the weight 
$\exp\left(\alpha\left|\frac{x}{R}+\phi(t)\vec e_1\right|^2\right)$, and splitting the resulting operator into a Hermitian and an 
anti-Hermitian part. Then, the commutator between the two parts is positive, for $g$ with the support property above and 
$\alpha\geq C_nR^2$.

In order to use Lemma \ref{lem110} to prove Theorem \ref{thm19}, we choose $\theta_R,\theta\in C^\infty(\R^n)$, 
$\phi\in C_0^\infty([0,1])$ so that $\theta_R(x)=1$ if $|x|<R-1$, $\theta_R(x)=0$, $|x|\geq R$;
$\theta(x)\equiv0$ if $|x|<1$, $\theta(x)\equiv1$, when $|x|\geq 2$;
$0\leq \phi\leq3$, with $\phi\equiv3$ on $\left[\frac{1}{2}-\frac{1}{8},\frac{1}{2}+\frac{1}{8}\right]$
and $\phi\equiv0$ on $[0,1/4]\cup[3/4,1]$. We apply  Lemma \ref{lem110} to 
$g(x,t)=\theta_R(x)\cdot\theta\left(\frac{x}{R}+\phi(t)\vec e_1\right)u(x,t)$, $\alpha\approx R^2$, to obtain, after some manipulations, the desired result.

We next turn our attention to corresponding results for the KdV equations. In \cite{Z1} it is proved that if 
\begin{displaymath}
\partial_tu+\partial_x^3u+u\partial_xu=0,\quad \text{in } \R\times[0,1],
\end{displaymath}
 and $u_0(x)=u(x,0)$, $u_1(x)=u(x,1)$ are supported in $(a,+\infty)$
or in $(-\infty,a)$, then $u\equiv0$. This was later extended by \cite{KPV1}, \cite{KPV2}, who also showed that if $v_1$, $v_2$ are solutions of 
\begin{displaymath}
\partial_tv+\partial_x^3v+v^k\partial_xv=0,\quad k\geq1,
\end{displaymath}
 and $u_0=v_1(x,0)-v_2(x,0)$, $u_1=v_1(x,1)-v_2(x,1)$ are supported in $(a,+\infty)$
or in $(-\infty,a)$, then $v_1\equiv v_2$. 

Further results are due to L. Robbiano (\cite{R}). He considered $u$ a solution to 
\begin{equation}\label{R}
\partial_tu+\partial_x^3u+a_2(x,t)\partial_x^2u+a_1(x,t)\partial_xu+a_0(x,t)u=0
\end{equation}
with coefficients $a_j$ in suitable function spaces. He showed that, if $u(x,0)=0$, $x\in(b,\infty)$ some $b$, and $\exists C_1,C_2>0$
s.t. 
\begin{displaymath}
|\partial_x^ju(x,t)|\leq C_1\exp(-C_2x^\alpha),\quad (x,t)\in(b,\infty)\times[0,1] 
\end{displaymath}
for some $\alpha>9/4$, then $u\equiv0$.

On the other hand, the Airy function $$A_i(x)=\int e^{2\pi ix\xi+\xi^3i}d\xi$$ is the fundamental solution for
$\partial_tu+\partial^3_xu=0$, and verifies
$$|A_i(x)|\leq C(1+x_-)^{-1/4}\exp(-Cx_+^{3/2}).$$ We now have

\begin{thm}[\cite{EKPV3}]\label{thm111}
If $u$ is a solution of \eqref{R} on $\R\times[0,1]$ such that $u(x,0), u(x,1)\in H^1(e^{ax_+^{3/2}}dx)$ for any $a>0$, and $a_j$ belong to suitable function spaces, then $u\equiv0$
\end{thm}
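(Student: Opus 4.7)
The overall strategy mirrors the proof of Theorem \ref{thm16} for Schr\"odinger: combine propagation of a weighted $L^2$ norm (the upper-bound side) with a quantitative lower bound on annular space-time $L^2$ integrals obtained from a tailored Carleman inequality (the lower-bound side). The natural weight here is the one-sided Airy weight $e^{a x_+^{3/2}}$, dictated by the sharp decay $|A_i(x)|\lesssim \exp(-Cx_+^{3/2})$ of the fundamental solution of $\partial_t+\partial_x^3$.

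\emph{Step 1 (propagation of the Airy weight).} The first task is a KdV analogue of Lemma \ref{lem17}: if $u$ solves $\partial_t u+\partial_x^3 u+a_2\partial_x^2 u+a_1\partial_x u+a_0 u=F$ with $u_0,u_1\in L^2(e^{2\beta x_+}\,dx)$ and the $a_j$ small in appropriate spaces, then
\begin{displaymath}
\sup_{0\le t\le 1}\|u(\cdot,t)\|_{L^2(e^{2\beta x_+}\,dx)}\le C\{\|u_0\|_*+\|u_1\|_*+\|F\|_*\},
\end{displaymath}
with $C$ independent of $\beta$. As in the Schr\"odinger case one is not free to prescribe both endpoint data, so the proof proceeds via a truncated weight and an auxiliary parameter; the Kato $L^2$ smoothing associated with $\partial_x^3$ provides the half-derivative gain needed to absorb the $a_2\partial_x^2 u$ term. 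A cut-off argument modelled on the proof of Corollary \ref{cor18}, now applied with $\beta=\gamma R^{1/2}$, upgrades the linear-weight estimate to the superlinear weight $e^{b x_+^{3/2}}$; localising in time to dispose of the smallness assumption on the $a_j$, one concludes that for every $a>0$
\begin{displaymath}
\sup_{0\le t\le 1}\int e^{2a x_+^{3/2}}(|u|^2+|\partial_x u|^2)\,dx<\infty.
\end{displaymath}

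\emph{Step 2 (Carleman estimate and lower bound).} The main new ingredient is a Carleman inequality for the KdV operator with a weight $e^{\alpha\Phi(x/R,t)}$, where $\Phi$ is a one-sided convex profile modelled on $x_+^{3/2}$ and $\alpha\sim R^{3/2}$ is the critical calibration. Conjugating $\partial_t+\partial_x^3$ by this weight and decomposing the conjugated operator into its skew-Hermitian and Hermitian parts, one must show that the resulting commutator together with conjugation-generated lower-order terms is positive on the relevant region. Schematically one obtains
\begin{displaymath}
\lambda(\alpha,R)\,\|e^{\alpha\Phi}g\|_{L^2}\le C\,\|e^{\alpha\Phi}(\partial_t+\partial_x^3)g\|_{L^2}
\end{displaymath}
for $\alpha$ large enough and $g$ supported outside a compact core region, with $\lambda(\alpha,R)\to\infty$ in $\alpha$. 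Applying this to $g(x,t)=\chi_R(x)\,\theta(x/R-\phi(t))\,u(x,t)$ with spatial cut-off $\chi_R$, and $\theta,\phi$ chosen as in the proof of Theorem \ref{thm19} so that $u$ is bounded below on the core, the commutator terms generated by the derivatives of the cut-offs force the analogue of Theorem \ref{thm19}:
\begin{displaymath}
\|u\|_{L^2([0,1]\times\{R-1\le x\le R\})}\gtrsim e^{-CR^{3/2}}.
\end{displaymath}

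\emph{Step 3 (conclusion) and main obstacle.} Step 1 yields $\|u\|_{L^2([0,1]\times\{R-1\le x\le R\})}\lesssim e^{-aR^{3/2}}$ for every $a>0$, which is incompatible with the lower bound of Step 2 unless $u\equiv 0$. The hardest step is clearly Step 2: for the third-order skew-adjoint operator $\partial_x^3$ the symbol calculus producing a positive commutator against a one-sided $x_+^{3/2}$ weight is significantly more delicate than the second-order Gaussian commutator used in Lemma \ref{lem110}, and the scaling $\alpha\sim R^{3/2}$ is critical rather than slack, so there is no room to waste. Moreover the lower-order perturbations $a_j\partial_x^j u$ must be absorbed on the right-hand side either by Kato smoothing or by additional powers of $\alpha$; the admissible function spaces for the $a_j$ (the ``suitable function spaces'' in the statement) are exactly those for which this absorption closes.
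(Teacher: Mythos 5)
Your overall architecture (weighted upper bound from the two endpoint data, quantitative lower bound from a Carleman inequality for $\partial_t+\partial_x^3$, and a contradiction as $R\to\infty$) is the right one, and your Step~2 correctly mirrors the Schr\"odinger scheme of Theorem~\ref{thm19}. However, your Step~1 is exactly the step that the paper singles out as \emph{not} working for KdV. The paper says, referring to this theorem, that ``the upper bounds can no longer be obtained by any variant of the energy estimates. These are now replaced by suitable `dispersive Carleman estimates'.'' Your Step~1 is precisely a variant of the energy estimates: a KdV analogue of Lemma~\ref{lem17} proved by truncating the weight, introducing an auxiliary parameter, and invoking Kato $L^2$ smoothing to absorb $a_2\partial_x^2 u$.

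Here is the concrete obstruction. The Schr\"odinger energy method of Lemma~\ref{lem17} works because conjugating $i\partial_t+\triangle$ by $e^{\beta x_1}$ produces $i\partial_t+\triangle+2\beta\partial_{x_1}+\beta^2$: the only new $x$-derivative term, $2\beta\partial_{x_1}$, is skew-symmetric, so it drops out of $\partial_t\|v\|^2$, and the split into $\xi_1>0$ and $\xi_1<0$ closes with a constant independent of $\beta$. Conjugating $\partial_t+\partial_x^3$ by $e^{\beta x}$ instead produces
\[
\partial_t+(\partial_x-\beta)^3=\partial_t+\partial_x^3-3\beta\partial_x^2+3\beta^2\partial_x-\beta^3,
\]
and the new term $-3\beta\partial_x^2$ is \emph{symmetric} and of size $\beta$. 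It does not cancel in the energy identity; for one sign of $\beta$ it behaves like a forward heat term and for the other like a backward one, so the well-posed time direction is tied to the sign of $\beta$, and there is no way to make the estimate uniform in $\beta$ while using data at both $t=0$ and $t=1$. Kato smoothing gives half a derivative, which is far from enough to control a $\beta\partial_x^2$ term, and it is in any case a local-in-space gain that does not survive the exponential weight. This is why \cite{EKPV3} replaces the whole of your Step~1 with an upper-bound Carleman estimate (``dispersive Carleman estimate'') adapted to the Airy weight, rather than any energy/Kato argument. Once you accept that both the upper and the lower bound must come from Carleman inequalities, your Step~2 and Step~3 are in the right spirit, and the $R^{3/2}$ calibration you identify is consistent with the Airy decay $|A_i(x)|\lesssim\exp(-Cx_+^{3/2})$ that makes the theorem sharp.
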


This is clearly optimal for $\partial_t u+\partial_x^3u=0$. The same result holds for $e^{ax_-^{3/2}}dx$. The proof of this theorem also has two steps, one consisting of upper bounds, the other of lower bounds. The second step follows closely that used for Schr\"odinger operators, but the upper bounds can no longer be obtained by any variant of the energy estimates. These are now replaced by suitable ``dispersive Carleman estimates''. A typical application of Theorem \ref{thm111} is:

\begin{thm}[\cite{EKPV3}]\label{thm112}
Let $$u_1, u_2\in C([0,1];H^3(\R))\cap L^2(|x|^2dx),$$ solve 
\begin{displaymath}
\partial_tu+\partial_x^3u+u^k\partial_xu=0\quad \text{on } \R\times[0,1]. 
\end{displaymath}
Assume that
$$u_1(\cdot,0)-u_2(\cdot,0),u_1(\cdot,1)-u_2(\cdot,1)\in H^1(e^{ax_+^{3/2}}dx)$$ for any $a>0$. Then $u_1\equiv u_2$.
\end{thm}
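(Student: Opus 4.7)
The plan is to reduce to Theorem \ref{thm111} applied to the difference $w=u_1-u_2$. Subtracting the equations satisfied by $u_1$ and $u_2$ gives
\[
\partial_t w+\partial_x^3 w+u_1^k\partial_x u_1-u_2^k\partial_x u_2=0,
\]
and the algebraic identity $u_1^k\partial_x u_1-u_2^k\partial_x u_2=u_1^k\partial_x w+(u_1^k-u_2^k)\partial_x u_2$ together with
\[
u_1^k-u_2^k=w\sum_{j=0}^{k-1}u_1^{k-1-j}u_2^j
\]
recasts this as the linear equation \eqref{R} for $w$, with $a_2\equiv 0$, $a_1(x,t)=u_1^k(x,t)$, and $a_0(x,t)=\partial_x u_2(x,t)\sum_{j=0}^{k-1}u_1^{k-1-j}u_2^j$.

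Next I would verify that $a_0,a_1$ lie in the function spaces tolerated by Theorem \ref{thm111}. The embedding $H^3(\R)\hookrightarrow C^2(\R)$ gives uniform-in-$t$ bounds on $u_1,u_2$ and on $\partial_x u_1,\partial_x u_2,\partial_x^2 u_1,\partial_x^2 u_2$, so both $a_1$ and $a_0$ are bounded and continuous on $\R\times[0,1]$. The extra hypothesis $u_1,u_2\in L^2(|x|^2\,dx)$, combined with Gagliardo--Nirenberg interpolation against the $H^3$ control, provides pointwise decay of $u_j$ and $\partial_x u_j$ as $|x|\to\infty$ (via the standard fact that $L^2(|x|^2 dx)\cap H^1$ functions tend to zero at infinity, improved by the third-order regularity to decay of derivatives). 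This is exactly the type of integrability/decay the dispersive Carleman estimates in Theorem \ref{thm111} require of the lower-order coefficients. Finally, the hypothesis that $w(\cdot,0)$ and $w(\cdot,1)$ lie in $H^1(e^{ax_+^{3/2}}dx)$ for every $a>0$ is precisely the decay hypothesis of Theorem \ref{thm111}. That theorem therefore forces $w\equiv 0$ on $\R\times[0,1]$, i.e.\ $u_1\equiv u_2$.

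The main obstacle I expect is the coefficient check in the second step: Theorem \ref{thm111} is stated for coefficients ``in suitable function spaces'', and the dispersive Carleman estimate used in its upper-bound half is sensitive to the behavior of $a_0,a_1$ as $x_+\to\infty$. One must ensure that the $C([0,1];H^3)\cap L^2(|x|^2 dx)$ class for $u_1,u_2$ closes up under the nonlinear product operations producing $a_0$ and $a_1$, and that the resulting decay of $a_0$ and the boundedness (with decaying derivatives) of $a_1$ are enough to fit into the hypotheses of Theorem \ref{thm111}. Once this is in place, no further work is needed: the nonlinear problem is reduced to the linear result of Theorem \ref{thm111} with no additional input.
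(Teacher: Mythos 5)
Your reduction is exactly the one the paper has in mind: the phrase ``a typical application of Theorem~\ref{thm111}'' signals that Theorem~\ref{thm112} follows by writing $w=u_1-u_2$ as a solution of the linear equation~\eqref{R} with $a_2\equiv 0$, $a_1=u_1^k$, and $a_0=\partial_x u_2\sum_{j=0}^{k-1}u_1^{k-1-j}u_2^j$, then invoking the linear uniqueness theorem. Your decomposition is correct, and your honest flagging of the coefficient-space check mirrors the fact that the paper itself leaves the ``suitable function spaces'' of Theorem~\ref{thm111} unspecified and defers to~\cite{EKPV3}.
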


Finally, we end with a result that shows that this result is sharp, even for the non-linear problem.

\begin{thm}[\cite{EKPV3}]\label{thm113}
There exists $u\not\equiv0$, a solution of 
\begin{displaymath}
\partial_tu+\partial_x^3u+u^k\partial_xu=0\quad \text{in } \R\times[0,1] 
\end{displaymath} s.t.
\begin{displaymath}
|u(x,0)|+|u(x,1)|\leq C\exp(-Cx_+^{3/2}).
\end{displaymath}
\end{thm}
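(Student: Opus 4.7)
The strategy is to construct $u$ as a small perturbation of an explicit linear Airy solution that already exhibits the sharp decay at both endpoints. For $\epsilon>0$ and $c>0$, the self-similar function
\[
v(x,t) = \epsilon\,(3(t+c))^{-1/3}\, A_i\!\left(\frac{x}{(3(t+c))^{1/3}}\right)
\]
solves $\partial_t v + \partial_x^3 v = 0$ on $\R\times[0,1]$, and the classical asymptotics $A_i(y) \sim (2\sqrt{\pi})^{-1} y^{-1/4} \exp(-\frac{2}{3} y^{3/2})$ as $y\to+\infty$ yield
\[
|v(x,0)|+|v(x,1)| \leq C_\epsilon \exp(-c_0\, x_+^{3/2})
\]
with $c_0 = 2/(3\sqrt{3(1+c)}) > 0$ independent of $\epsilon$. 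Note that $v$ is real-valued and nowhere zero at $t=0$, so it qualifies as a template.

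I then look for the full solution in the form $u = v + w$, where $w$ solves the inhomogeneous Airy problem
\[
\partial_t w + \partial_x^3 w = -(v+w)^k\,\partial_x(v+w), \qquad w(\cdot,0) \equiv 0,
\]
whose right-hand side is of order $\epsilon^{k+1}$ when $v,w$ are small. The plan is to solve for $w$ by a contraction mapping in a Banach space $X_a$ of functions controlled by $\|f(\cdot,t)\|_{H^1(e^{2ax_+^{3/2}}dx)}$ together with the Kato-type smoothing and maximal-function norms from the KPV theory of the KdV IVP, for some fixed $0 < a < c_0$. The weight $e^{ax_+^{3/2}}$ is tailored precisely to the natural decay of $A_i$ on its non-oscillatory side.

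The principal obstacle is the weighted linear estimate: one must show that the Duhamel operator $F \mapsto \int_0^t U(t-s) F(s)\,ds$ sends $X_a$-forcings to $X_a$-solutions, uniformly on $[0,1]$. Since $x_+^{3/2}$ is precisely the threshold decay rate preserved by the free Airy flow, no weight loss is permitted, and the needed positivity must be extracted from a dispersive Carleman inequality in the spirit of Lemma \ref{lem110}: conjugate $\partial_t + \partial_x^3$ by a smoothed version of $e^{ax_+^{3/2}}$ and show that the commutator between the Hermitian and anti-Hermitian parts of the resulting operator is nonnegative for $a$ sufficiently small. The one-sidedness of the weight (only the behavior as $x\to+\infty$ matters) is essential here and mirrors the upper-bound step in the proof of Theorem \ref{thm111}.

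Granted such a weighted linear estimate together with the nonlinear bound $\|(v+w)^k\,\partial_x(v+w)\|_{Y_a} \lesssim (\|v\|_{X_a}+\|w\|_{X_a})^{k+1}$ in a dual-type space $Y_a$, a standard contraction on a small ball in $X_a$ produces, for $\epsilon$ sufficiently small, a unique $w$ with $\|w\|_{X_a} \lesssim \epsilon^{k+1}$. Then $u = v + w \not\equiv 0$ because $v$ dominates $w$ in any reasonable norm, $u$ solves the nonlinear equation, and by construction $|u(x,0)| + |u(x,1)| \leq C \exp(-a\, x_+^{3/2})$, which is the claimed estimate after relabeling constants.
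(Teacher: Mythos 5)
Your overall strategy---perturbing off a linear Airy evolution that already exhibits the sharp one-sided decay, and closing a contraction in a weighted KPV-type space---is the natural one and is, in spirit, what \cite{EKPV3} does. But your template $v$ does not live in the Banach space in which you need it to. The Airy function is not in $L^2(\R)$: it decays super-exponentially to the right, but to the left one has $A_i(y)\sim\pi^{-1/2}|y|^{-1/4}\sin\bigl(\tfrac23|y|^{3/2}+\tfrac\pi4\bigr)$, so $|A_i|^2\sim|y|^{-1/2}$ is not integrable near $-\infty$; worse, $A_i'(y)\sim|y|^{1/4}\cos(\cdots)$ is unbounded. Your weight $e^{2ax_+^{3/2}}$ is identically $1$ for $x\leq 0$ and offers no relief there, so $v(\cdot,t)\notin H^1(e^{2ax_+^{3/2}}dx)$ for any $t$. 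The nonlinear estimate you invoke, $\|(v+w)^k\partial_x(v+w)\|_{Y_a}\lesssim(\|v\|_{X_a}+\|w\|_{X_a})^{k+1}$, therefore has an infinite right-hand side, and the fixed-point scheme cannot even be set up. (A smaller slip: $A_i$ has infinitely many negative real zeros, so $v(\cdot,0)$ is not ``nowhere zero,'' though this is harmless for the argument.)

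The fix is to abandon the delta datum and take $v=e^{-t\partial_x^3}u_0$ for a fixed nonzero Schwartz datum $u_0$ (a Gaussian works). Then $v(\cdot,t)$ is Schwartz for every $t\in[0,1]$, hence belongs to every reasonable $X_a$, and the sharp one-sided decay survives: steepest descent applied to $\int e^{ix\xi+it\xi^3}\hat u_0(\xi)\,d\xi$ through the saddle near $\xi\approx i\sqrt{x/(3t)}$ gives $|v(x,t)|\leq C\exp\bigl(-\bigl(\tfrac{2}{3\sqrt{3t}}-o(1)\bigr)x_+^{3/2}\bigr)$ at each $t>0$, with even faster (Gaussian) decay at $t=0$. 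With that substitution the scheme becomes coherent. What remains is the genuinely hard part, which you only sketch: the weighted linear estimate for the Duhamel operator at the borderline weight $e^{ax_+^{3/2}}$, together with enough Kato-type smoothing to absorb the derivative in the nonlinearity. That estimate is the dispersive analogue of the ``upper bound'' step behind Theorem~\ref{thm111}, and establishing it is where the real content of the construction lies.
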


\section{Convexity properties of Gaussian means of solutions to Schr\"odinger equations}

As mentioned before, \cite{EKPV2} proved that if $u\in C([0,1];H^1(\R^n))$ solves
\begin{displaymath}
\left\{\begin{array}{ll}
i\partial_tu+\triangle u+V(x,t)u=0&\quad\text{in }\R^n\times[0,1]\\
u(0)=u_0& \\
u(1)=u_1
\end{array}\right.
\end{displaymath}
and $u_i\in L^2(e^{a|x|^\theta}dx)$ for some $a>0$, $\theta>1$, then $\exists C_\theta>0$, $b>0$ s.t.
\begin{displaymath}
\sup_{0<t<1}\int_{|x|>C_\theta}e^{b|x|^\theta}|u(x,t)|^2dx<\infty
\end{displaymath}
when the (complex) potential verifies $||V||_{L_t^1L_x^\infty}\leq\epsilon$, $\epsilon=\epsilon_n$. We will next re-examine this result and precise it, in the case $\theta=2$. We will first deal with potentials $V=V(x)$, $V$ real valued; $||V||_\infty\leq M_1$. We will consider $u\in C([0,1];L^2(\R^n))$ which verifies
\begin{displaymath}
\partial_tu=i(\triangle u+Vu)\quad \text{in } \R^n\times[0,1].
\end{displaymath}
 We will assume that there exist positive numbers $\alpha$ and $\beta$ such that
$||e^{|x|^2\!/\!\beta^2}\!\! u(0)||$, $||e^{|x|^2/\alpha^2}u(1)||$ are finite. Here and in the sequel $||\cdot||$ denotes the $L^2$ norm in $x$. Then 
$$\left\lVert e^{|x|^2/(\alpha t+(1-t)\beta)^2}u(t)\right\rVert^{\alpha t+(1-t)\beta}$$ is ``logarithmically convex'' in $[0,1]$, i.e. 

\begin{thm}[\cite{EKPV5}]\label{thm1}
There exists $N=N(\alpha,\beta)$ so that for $0<s<1$ we have
\begin{multline*}
\left\lVert e^{|x|^2/(\alpha s+(1-s)\beta)^2}u(s)\right\rVert\leq 
e^{N(M_1+M_1^2)}\\\times\left\lVert e^{|x|^2/\beta^2}u(0)\right\rVert^{\beta(1-s)/(\alpha s+(1-s)\beta)}
\left\lVert e^{|x|^2/\alpha^2}u(1)\right\rVert^{\alpha s/(\alpha s+(1-s)\beta)}.
\end{multline*}
Moreover (``smoothing effect'')
\begin{multline*}
\left\lVert \sqrt{t(1-t)}e^{|x|^2/(\alpha t+(1-t)\beta)^2}\nabla u(t)\right\rVert_{L^2(\R^n\times[0,1])}\leq\\
Ne^{N(M_1+M_1^2)}\left[||e^{|x|^2/\beta^2}u(0)||+||e^{|x|^2/\alpha^2}u(1)||\right].
\end{multline*}
\end{thm}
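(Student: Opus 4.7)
The plan is to apply the abstract logarithmic convexity method of EKPV: conjugate the Schr\"odinger equation by the Gaussian weight, split the resulting equation for the conjugated function into a formally symmetric and a formally antisymmetric part on $L^2(\R^n)$, and then extract a two-sided differential inequality for $\log\|f(t)\|^2$ from a commutator calculation. The log-convexity of $\|e^{|x|^2/\lambda^2}u\|^{\lambda}$ (rather than of $\|e^{|x|^2/\lambda^2}u\|$) amounts to establishing such an inequality for the quantity $\lambda(t)\log\|f(t)\|^2$, up to an additive error of size $C(\alpha,\beta)(M_1+M_1^2)$.

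With $\lambda(t) = \alpha t + (1-t)\beta$ (so $\lambda''=0$) and $f(x,t) = e^{|x|^2/\lambda(t)^2}u(x,t)$, the equation $\partial_t u = i\Delta u + iVu$ transforms into
\begin{displaymath}
\partial_t f = \mathcal{S}f + \mathcal{A}f,\qquad \mathcal{S} = (\lambda^{-2})'|x|^2 + 4\lambda^{-2}\mathcal{D},\qquad \mathcal{A} = i\Delta + 4i\lambda^{-4}|x|^2 + iV,
\end{displaymath}
where $\mathcal{D} = -i(x\cdot\nabla + n/2)$ is the (symmetric) dilation generator. Setting $H(t) = \|f(t)\|^2$ and using $\mathcal{S}^* = \mathcal{S}$, $\mathcal{A}^* = -\mathcal{A}$, one obtains $H' = 2\langle \mathcal{S}f, f\rangle$, $H'' = 2\langle (\partial_t\mathcal{S} + [\mathcal{S},\mathcal{A}])f, f\rangle + 4\|\mathcal{S}f\|^2$, and hence by Cauchy--Schwarz
\begin{displaymath}
(\log H)''(t) \geq \frac{2\langle(\partial_t\mathcal{S} + [\mathcal{S},\mathcal{A}])f,f\rangle}{\|f(t)\|^2}.
\end{displaymath}

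An explicit commutator calculation yields
\begin{displaymath}
\partial_t\mathcal{S} + [\mathcal{S},\mathcal{A}] = \bigl(6\lambda^{-4}(\lambda')^2 + 32\lambda^{-6}\bigr)|x|^2 - 16\lambda^{-3}\lambda'\mathcal{D} - 8\lambda^{-2}\Delta + 4\lambda^{-2}\,x\cdot\nabla V.
\end{displaymath}
The first and third terms are manifestly non-negative as quadratic forms, while the $\mathcal{D}$-term is sign-indefinite and is controlled via $|\langle\mathcal{D}f,f\rangle| \leq \|\,|x|f\,\|\,\|\nabla f\|$ and absorbed into the positive pieces with a constant depending on $\alpha,\beta$. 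The potential contribution is handled \emph{without} requiring bounds on $\nabla V$: integrating by parts in $x$ rewrites $\int(x\cdot\nabla V)|f|^2$ as $-\int V(n|f|^2 + x\cdot\nabla|f|^2)$, whose size is $M_1(\|f\|^2 + \|\,|x|f\,\|\,\|\nabla f\|)$; Cauchy--Schwarz absorbs the cross part at the cost of an $M_1^2\|f\|^2$ remainder. Forming the combination $(\lambda\log H)'' = \lambda(\log H)'' + 2\lambda'(\log H)'$ and inserting these estimates gives
\begin{displaymath}
(\lambda(t)\log\|f(t)\|^2)'' \geq -C(\alpha,\beta)(M_1 + M_1^2),
\end{displaymath}
which integrates to the stated three-lines inequality after exponentiation. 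The smoothing estimate on $\sqrt{t(1-t)}\,\nabla u$ is read off by retaining the left-over positive $\lambda^{-2}\|\nabla f\|^2$ and $\|\mathcal{S}f\|^2$ contributions and testing the differential inequality against a cutoff in $t$ vanishing at $0$ and $1$.

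The main obstacle is \emph{not} the algebra but the rigorous justification of these steps: a priori, $f(t) = e^{|x|^2/\lambda(t)^2}u(t)$ and its gradient are known to lie in $L^2$ only at $t = 0,1$, so the inner products above need not be finite at intermediate times and the formal integrations by parts are illegal. The remedy is to replace the weight $|x|^2/\lambda^2$ by a smooth truncation $\varphi_R(x,t)$ agreeing with $|x|^2/\lambda(t)^2$ on $\{|x|<R\}$ and bounded for $|x|\geq R$, carry out the entire computation for $f_R = e^{\varphi_R}u$ --- keeping careful track of the extra error terms produced by $\nabla\varphi_R$ and $\partial_t\varphi_R$ outside $\{|x|<R\}$ --- and then pass to the limit $R\to\infty$. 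The uniform $R$-independent control needed for the limit is supplied by the a priori weighted-$L^2$ bound of Corollary \ref{cor18}, and the verification that the limit function lies in $C([0,1]; L^2(e^{2|x|^2/\lambda(t)^2}dx))$ is the technically delicate part.
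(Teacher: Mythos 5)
Your outline takes a genuinely different algebraic route from the paper: you work directly with the time-dependent weight $\lambda(t)=\alpha t+(1-t)\beta$ and the functional $\lambda\log\|f\|^2$, whereas \cite{EKPV5} first proves the case $\alpha=\beta$ with a \emph{fixed} Gaussian weight (using the abstract Key Convexity Lemma) and then reduces the case $\alpha\neq\beta$ to it by the conformal (Appel) transformation of Section~3. Your commutator algebra for $\mathcal{S}_t+[\mathcal{S},\mathcal{A}]$ is correct, and one can check that forming $(\lambda\log H)''=\lambda(\log H)''+2\lambda'(\log H)'$ supplies exactly the extra $\||x|f\|^2$-weight needed to absorb the $\mathcal{D}$ cross-term by Cauchy--Schwarz, so the direct route is workable in principle. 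But the treatment of the potential is flawed. Putting $iV$ in the anti-symmetric part creates the commutator piece $4\lambda^{-2}x\cdot\nabla V$, and after your integration by parts you are left with a cross term of size $M_1\||x|f\|\,\|\nabla f\|$, which you claim Cauchy--Schwarz converts into an $M_1^2\|f\|^2$ remainder. It does not: $\||x|f\|\,\|\nabla f\|$ is not comparable to $\|f\|^2$, and absorbing it into the leftover positive pieces (whose coefficients depend only on $\alpha,\beta$) requires a smallness restriction on $M_1$, contradicting the stated constant $e^{N(\alpha,\beta)(M_1+M_1^2)}$ valid for all $M_1$. The paper avoids this entirely by keeping $V$ in the \emph{source}, $\partial_t f-\mathcal{S}f-\mathcal{A}f=iVf$, bounded pointwise by $M_1|f|$, so that $M_1^2$ enters only through the $\|\partial_t f-Sf-Af\|^2$ term of the Key Convexity Lemma with no absorption at all.

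The more fundamental gap is in the justification. You propose truncating the weight to $\varphi_R$, computing with $f_R=e^{\varphi_R}u$, and controlling the passage $R\to\infty$ by Corollary~\ref{cor18}. But Corollary~\ref{cor18} requires $\|V\|_{L^1_tL^\infty_x}\le\epsilon_n$, a smallness hypothesis absent from Theorem~\ref{thm1}, and even where it applies it only yields decay with some $b<a$ (a strictly weaker exponent), which is not enough to kill the boundary terms uniformly in $R$. The paper's own ``misleading convexity argument'' is an explicit warning that exactly this kind of formal convexity computation can prove that nonzero free solutions vanish; truncation alone does not protect against that failure. The rigorous proof in \cite{EKPV5} is built on a quite different regularization scheme: add a small parabolic term by replacing $i(\Delta+V)$ with $(a+i)(\Delta+V)$, $a>0$; use the energy-method lemma to get \emph{a priori} Gaussian bounds at all intermediate times for the regularized flow; replace $|x|^2$ by the $C^{1,1}$ truncated weight $\phi_{\epsilon,\rho}$ growing like $|x|^{2-\epsilon}$ so that these a priori bounds legitimize the integrations by parts; exploit $u_a(t)=e^{atH}u(t)$ with $H=\Delta+V$ self-adjoint ($V=V(x)$ real) to relate the regularized and original solutions at $t=1$; and finally take $\epsilon,\rho,a\to 0$. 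None of these ingredients appear in your outline, and without them the finiteness required by the convexity computation is not supplied.
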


Note that when $\alpha=\beta$, we have $\alpha t+(1-t)\beta=\alpha$ and this gives the precise version (for this case) of the \cite{EKPV2} result. We start with the sketch of the proof in the case $\alpha=\beta$. It turns out that a formal argument giving the proof is not too dificult, but a rigurous justification is tricky. This is an important fact, since, as we will see, the formal arguments actually can lead to false results. To justify the interest of the case $\alpha\not=\beta$, consider the case $V\equiv0$, i.e. the free particle. Then, if $u_0=u(0)$,
\begin{multline*}
u(x,t)=(e^{-i|\xi|^2t}\hat u_0)\,{\check{ }}=
\int_{\R^n}\frac{e^{i|x-y|^2/4t}}{(4\pi it)^{n/2}}u_0(y)dy=\\=
\frac{e^{i|x|^2/4t}}{(4\pi it)^{n/2}}\int e^{-2ix\cdot\xi/4t}e^{i|y|^2/4t}u_0(y)dy=
\frac{e^{i|x|^2/4t}}{(2it)^{n/2}}\widehat{\left(e^{i|\cdot|^2/4t}u_0\right)}(x/2t),
\end{multline*}
so that, with $c_t=(2it)^{n/2}$,
\begin{displaymath}
c_te^{-i|x|^2/4t}u(x,t)=(e^{i|\cdot|^2/4t}u_0)\,\hat{ }\,(x/2t).
\end{displaymath} 
In this context the Hardy uncertainty principle says that if $$u(0)\in L^2(e^{2|x|^2/\beta^2}dx),\quad 
u(1)\in L^2(e^{2|x|^2/\alpha^2}dx),$$ with $\alpha\beta\leq4,$ then $u_0\equiv0$ and $4$ is sharp.

\begin{kcl}[abstract]
Let $S$ be a symmetric operator, $A$ an anti-symmetric one (possibly both depending on $t$), $F$ a positive function, $f(x,t)$ a ``reasonable function''. Let $H(t)=(f,f)$, $D(t)=(Sf,f)$, $\partial_t S=S_t$ and $N(t)=D(t)/H(t)$ (the ``frequency function''). Then
\begin{itemize}
\item[i)] 
$\partial_t^2H=2\partial_t\re(\partial_tf-Sf-Af,f)+\\+2(S_tf+[S,A]f,f)+||\partial_tf-Af-Sf||^2-||\partial_tf-Af-Sf||^2$\\
and
\item[ii)] $\dot N(t)\geq(S_tf+[S,A]f,f)/H-||\partial_tf-Af-Sf||^2/(2H)$

\item[iii)] Moreover, if 
\begin{displaymath}
|\partial_tf-Af-Sf|\leq M_1|f|+F\quad \text{in }  \R^n\times[0,1],\quad S_t+[S,A]\geq-M_0
\end{displaymath}
 and 
\begin{displaymath}
\displaystyle M_2=sup_{0\leq t\leq1}||F(t)||/||f(t)||<\infty,
\end{displaymath}
 then $H(t)$ is ``logarithmically convex'' in $[0,1]$ and
$$
H(t)\leq e^{N(M_0+M_1+M_1^2+M_2+M_2^2)}H(0)^{1-t}H(1)^t.
$$
\end{itemize}
\end{kcl}

\begin{proof}
\begin{displaymath}
\dot H(t)=2\re(\partial_tf,f)=2\re(\partial_tf-Sf-Af,f)+2(Sf,f),
\end{displaymath}
so
\begin{equation}\label{hdt}
\dot H(t)=2\re(\partial_tf-Sf-Af,f)+2D(t).
\end{equation}
Also,
\begin{displaymath}
\dot H(t)=\re(\partial_tf+Sf,f)+\re(\partial_tf-Sf,f),
\end{displaymath}
\begin{displaymath}
D(t)=\frac{1}{2}\re(\partial_tf+Sf,f)-\frac{1}{2}\re(\partial_tf-Sf,f).
\end{displaymath}
Multiplying
\begin{equation}\label{HD}
\dot H(t)D(t)=\frac{1}{2}\re(\partial_tf+Sf,f)^2-\frac{1}{2}\re(\partial_tf-Sf,f)^2.
\end{equation}
Adding an anti-symmetric part does not change the real parts, so
\begin{equation}\label{HD2}
\dot H(t)D(t)=\frac{1}{2}\re(\partial_tf+Sf-Af,f)^2-\frac{1}{2}\re(\partial_tf-Sf-Af,f)^2.
\end{equation}
Differentiating $D(t)$, we get
\begin{multline}\label{FF}
\dot D(t)=(S_tf,f)+(S\partial_tf,f)+(Sf,\partial_tf)
=(S_tf,f)+2\re(\partial_tf,Sf)=\\
=(S_tf+[S,A]f,f)+2\re(\partial_tf-Af,Sf)=\\
=(S_tf,[S,A]f,f)+\frac{1}{2}||\partial_tf-Af+Sf||^2-\frac{1}{2}||\partial_tf-Af-Sf||^2
\end{multline}
by polarization. This and \eqref{hdt} gives i). Next,
\begin{multline*}
\dot N(t)= (S_tf+[S,A]f,f)/H+\\
+\frac{1}{2}\left[||\partial_tf-Af+Sf||^2||f||^2-(\re(\partial_tf-Af+Sf,f))^2\right]/H^2+\\
+\frac{1}{2}\left[(\re(\partial_tf-Af-Sf,f))^2-||\partial_tf-Af-Sf||^2||f||^2\right]/H^2
\end{multline*}
follows from \eqref{HD2}. Now, the second line is non-negative (Cauchy-Schwartz), $$(\re(\partial_tf-Af-Sf,f))^2\geq0,$$ so ii) follows.

When we are in the situation of iii), $$\dot N(t)\geq-M_0+M_1^2+M_2^2,$$ so that \eqref{hdt} now gives 
$\partial_t[\log H(t)]=\mathcal{O}(1)+2N(t)$. If $G'(t)=\mathcal{O}(1)$, $G(0)=0$, we get $\partial_t[\log H(t)-G(t)]=2N(t)$, so that
$$\partial_t^2[\log H(t)-G(t)]\geq-(M_0+M_1^2+M_2^2),$$ so that
\begin{displaymath}
\partial_t^2\left[\log H(t)-G(t) +(M_0+M_1^2+M_2^2)t^2/2\right]\geq0
\end{displaymath}
which gives the desired ``log convexity''.
\end{proof}

\paragraph{\textbf{Sketch of Proof ($\alpha=\beta=\gamma$).}} 

Let us now indicate how the ``formal argument'' for the first part of Theorem \ref{thm1} would follow, when $\alpha=\beta=\gamma$. Suppose now (for later use) that
\begin{displaymath}
\partial _tu=(a+ib)(\triangle u+V(x,t)u+F(x,t))\quad\text{in }\R^n\times[0,1],
\end{displaymath}
$a\geq0$, $||e^{\gamma|x|^2}u(0)||<\infty$, $||e^{\gamma|x|^2}u(1)||<\infty$, $\sup_{[0,1]}||e^{\gamma|x|^2}F(x,t)||/||u(t)||=M_2$,
$V$ is complex valued, $||V||_\infty\leq M_1$. Let $f=e^{\gamma\phi}u$, where $\phi(x,t)$ is to be chosen. Then, $f$ verifies
\begin{displaymath}
\partial_tf=Sf+Af+(a+ib)(Vf+e^{\gamma\phi}F),
\end{displaymath}
where
\begin{displaymath}
S=a(\triangle+\gamma^2|\nabla\phi|^2)-ib\gamma(2\nabla\phi\cdot\nabla+\triangle\phi)+\gamma\partial_t\phi,
\end{displaymath}
\begin{displaymath}
A=ib(\triangle+\gamma^2|\nabla\phi|^2)-a\gamma(2\nabla\phi\cdot\nabla+\triangle\phi)
\end{displaymath}
are symmetric, anti-symmetric. When $\phi(x,t)=|x|^2$ we obtain
\begin{displaymath}
S_t+[S,A]=-\gamma(a^2+b^2)[8\triangle-32\gamma^2|x|^2],
\end{displaymath}
so that $$S_t+[S,A]\geq0,\quad |\partial_tf-Sf-Af|\leq\sqrt{a^2+b^2}(M_1|f|+e^{\gamma|x|^2}|F|)$$ and the Lemma ``gives'' the (formal) ``log convexity'' result.

We need to have an argument which gives us the required smoothness and decay to justify the formal argument. Before doing that, we give the ``formal'' argument for the smoothing estimate: first note that integration by parts shows that
\begin{displaymath}
\int|\nabla f|^2+4\gamma^2|x|^2|f|^2=\int e^{2\gamma|x|^2}(|\nabla u|^2-2n\gamma|u|^2)dx
\end{displaymath}
where $f=e^{\gamma|x|^2}u$. Also, since $n=\nabla\cdot x$, integration by parts and Cauchy-Schwartz give
\begin{displaymath}
\int|\nabla f|^2+4\gamma^2|x|^2|f|^2\geq2\gamma n\int|f|^2dx.
\end{displaymath}
Adding we obtain
\begin{equation}\label{x}
2\left(\int|\nabla f|^2+4\gamma^2|x|^2|f|^2\right)\geq\int e^{2\gamma|x|^2}|\nabla u|^2dx.
\end{equation}
Recall
\begin{multline*}
\partial_t^2H(t)=2\partial_t\re(\partial_t f-Sf-Af,f)+2(S_tf+[S,A]f,f)+\\+
||\partial_tf+Sf-Af||^2-||\partial_tf-Af-Sf||^2\geq\\\geq
2\partial_t\re(\partial_t f-Sf-Af,f)-||\partial_tf-Af-Sf||^2+2(S_tf+[S,A]f,f).
\end{multline*}
Multiply by $t(1-t)$ and integrate by parts to obtain
\begin{multline*}
2\int_0^1 t(1-t) (S_tf+[S,A]f,f)dt+2\int_0^1 H(t)dt\leq H(1)+H(0)+\\+2\int_0^1(1-2t)\re(\partial_t f-Sf-Af,f)+
\int_0^1t(1-t)||\partial_t f-Sf-Af||^2dt.
\end{multline*}
We now use 
\begin{displaymath}
S_t+[S,A]=-\gamma(a^2+b^2)[8\triangle-32\gamma^2|x|^2],
\end{displaymath}
\begin{displaymath}
|\partial_t f-Sf-Af|\leq\sqrt{a^2+b^2}\left(M_1|f|+e^{\gamma|x|^2}|F|\right),
\end{displaymath}
to obtain:
\begin{multline*}
16\gamma(a^2+b^2)\int_0^1\int t(1-t)\left\{|\nabla f|^2+4\gamma^2|x|^2|f|^2\right\}\leq\\
\leq\left[(NM_1^2+1)\sup_{[0,1]}\left\lVert e^{\gamma|x|^2}u(t)\right\rVert^2+\sup_{[0,1]}\left\lVert e^{\gamma|x|^2}F\right\rVert^2
\right](a^2+b^2).
\end{multline*}
Finally, $\nabla f= e^{\gamma|x|^2}(\nabla u+2xu\gamma)$, and \eqref{x} gives the bound: ($\gamma>0$)
\begin{multline*}
||\sqrt{t(1-t)}e^{\gamma|x|^2}\nabla u||_{L^2(\R^n\times[0,1])}+||\sqrt{t(1-t)}|x|e^{\gamma|x|^2} u||_{L^2(\R^n\times[0,1])}
\leq \\ \leq N\left[(1+\sqrt{M_1}+M_1)\sup_{[0,1]}\left\lVert e^{\gamma|x|^2}|u(t)|\right\rVert+
\sup_{[0,1]}\left\lVert e^{\gamma|x|^2}F\right\rVert_{L^2(\R^n\times[0,1])}\right].
\end{multline*}

How to justify the formal arguments? We first change $i(\triangle+V)$ by $(a+i)(\triangle+V)$, we change $|x|^2$ by $|x|^{2-\epsilon}$,
 $a>0$, $\epsilon>0$ and then pass to the limit. This can be justified when $V=V(x)$, real, bounded. This is how we proceed:

\begin{lem}[Energy method] Assume that $u\in L^\infty([0,1];L^2)\cap L^2([0,1];H^1)$ satisfies
\begin{displaymath}
\partial_t u=(a+ib)(\triangle u+V(x,t)u)+F(x,t)\quad \text{in }\R^n\times[0,1],
\end{displaymath}
$a>0$, $b\in\R$. Then, for $0\leq T\leq1$,
\begin{multline*}
e^{-M_T}\left\lVert e^{\gamma a|x|^2/(a+4\gamma(a^2+b^2)T)}u(T)\right\rVert\leq\\\leq
\left\lVert e^{\gamma|x|^2}u(0)\right\rVert+
\sqrt{a^2+b^2}\left\lVert e^{\gamma a|x|^2/(a+4\gamma(a^2+b^2)T)}F\right\rVert_{L^1([0,T];L^2)},
\end{multline*}
where $M_T=||a\re V-b\im V||_{L^1([0,T];L^\infty)}$.
\end{lem}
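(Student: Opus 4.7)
The plan is to conjugate the equation by the Gaussian weight and reduce the result to a weighted Gr\"onwall inequality. Set $\varphi(x,t)=\mu(t)|x|^2$ with $\mu$ to be determined, let $v=e^{\varphi}u$, and observe from $u=e^{-\varphi}v$ that $e^\varphi\triangle u=\triangle v-2\nabla\varphi\cdot\nabla v+(|\nabla\varphi|^2-\triangle\varphi)v$. A direct computation of $\tfrac12\tfrac{d}{dt}\|v\|^2=\re\int\bar v\,\partial_tv$ followed by one integration by parts yields
\[
\tfrac12\tfrac{d}{dt}\|v\|^2=\int\!(\partial_t\varphi+a|\nabla\varphi|^2)|v|^2-a\|\nabla v\|^2+2b\im\!\int\!\bar v\,\nabla\varphi\cdot\nabla v+\int(a\re V-b\im V)|v|^2+\re\!\int\bar v\,e^\varphi F.
\]

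The antisymmetric cross term is the only delicate piece; I estimate it by Cauchy--Schwarz with weight $a$,
\[
\bigl|2b\im\!\int\!\bar v\,\nabla\varphi\cdot\nabla v\bigr|\le a\|\nabla v\|^2+\tfrac{b^2}{a}\!\int|\nabla\varphi|^2|v|^2,
\]
which exactly absorbs the $-a\|\nabla v\|^2$ dissipation and leaves a residue whose $|v|^2$-coefficient is $\partial_t\varphi+\tfrac{a^2+b^2}{a}|\nabla\varphi|^2=\bigl(\mu'+\tfrac{4(a^2+b^2)}{a}\mu^2\bigr)|x|^2$. Choosing $\mu$ to solve the Bernoulli ODE $\mu'+\tfrac{4(a^2+b^2)}{a}\mu^2=0$ with $\mu(0)=\gamma$ produces $\mu(t)=\gamma a/(a+4\gamma(a^2+b^2)t)$, killing the residue identically and dictating the precise weight in the statement. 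What survives is $\tfrac{d}{dt}\|v\|^2\le 2\|(a\re V-b\im V)(\cdot,t)\|_{L^\infty}\|v\|^2+2\|v\|\,\|e^{\varphi(\cdot,t)}F(\cdot,t)\|$; applying Gr\"onwall to $t\mapsto\|v(t)\|\exp\bigl(-\!\int_0^t\!\|(a\re V-b\im V)(\cdot,s)\|_{L^\infty}\,ds\bigr)$ yields the desired bound, the time-$t$ weight on $F$ dominating the advertised time-$T$ weight in the statement because $\mu$ is monotone decreasing, and the factor $\sqrt{a^2+b^2}$ being absorbed with room to spare.

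The main obstacle is entirely technical: under the hypothesis $u\in L^\infty_tL^2_x\cap L^2_tH^1_x$ one cannot even assert $e^{\mu(t)|x|^2}u\in L^2$, so the integrations by parts above are not a priori legitimate. I would resolve this by introducing the bounded approximant $\varphi_\epsilon(x,t)=\mu(t)|x|^2/(1+\epsilon|x|^2)$; a direct calculation shows
\[
\partial_t\varphi_\epsilon+\tfrac{a^2+b^2}{a}|\nabla\varphi_\epsilon|^2=\frac{4(a^2+b^2)\mu^2|x|^2}{a(1+\epsilon|x|^2)^4}\bigl(1-(1+\epsilon|x|^2)^3\bigr)\le 0
\]
pointwise, so the algebra above goes through verbatim with $\varphi$ replaced by $\varphi_\epsilon$; since $e^{\varphi_\epsilon}$ is bounded in $x$, the conjugated $v_\epsilon$ lies in $L^\infty_tL^2_x\cap L^2_tH^1_x$ and every integration by parts is legitimate. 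Fatou on the left and monotone convergence on the right as $\epsilon\downarrow 0$ deliver the lemma in full generality.
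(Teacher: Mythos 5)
Your argument follows the paper's route precisely: conjugate by the Gaussian weight $e^{\mu(t)|x|^2}$, compute the energy identity, absorb the antisymmetric cross term into the $-a\|\nabla v\|^2$ dissipation via weighted Cauchy--Schwarz, and choose $\mu$ to solve $\mu'+\tfrac{4(a^2+b^2)}{a}\mu^2=0$, $\mu(0)=\gamma$ --- the very Bernoulli ODE the paper writes for $h(t)$ --- so that the residual $|v|^2$-coefficient vanishes. Gr\"onwall then closes the argument. The one structural difference is the regularization used to legitimize the integrations by parts: the paper truncates $|x|^2$ at level $R^2$, convolves with a radial mollifier $\theta_\rho$, and passes to the limit in $\rho$ then $R$, using Jensen's inequality to preserve $|\nabla(\theta_\rho\ast\phi_R)|^2\le 4\,\theta_\rho\ast\phi_R$; you use the single-parameter rational approximant $\varphi_\epsilon=\mu(t)|x|^2/(1+\epsilon|x|^2)$ and verify the sign condition $\partial_t\varphi_\epsilon+\tfrac{a^2+b^2}{a}|\nabla\varphi_\epsilon|^2\le 0$ by direct computation. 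Both accomplish the same thing; yours is a bit tidier for this particular weight, while the truncate-and-mollify device is the one the paper reuses in the next step where $|x|^2$ is replaced by $|x|^{2-\epsilon}$.

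One caveat about your final inference. Gr\"onwall produces $\|e^{\mu(t)|x|^2}F(t)\|$ inside the $L^1_t$-norm, and since $\mu$ is decreasing one has $e^{\mu(t)|x|^2}\ge e^{\mu(T)|x|^2}$ on $[0,T]$: the time-$t$ weight does dominate the time-$T$ weight, as you say, but for that very reason the Gr\"onwall output is a \emph{weaker} inequality (larger right-hand side) than the displayed lemma, not a stronger one, so the implication you assert runs the wrong way. This is not a defect introduced by you --- the paper's own sketch stops at the differential inequality $\partial_t\|v(t)\|^2\leq 2\|a\,\re V-b\,\im V\|_\infty\|v(t)\|^2+2\sqrt{a^2+b^2}\,\|e^{\phi(\cdot,t)}F(t)\|\,\|v(t)\|$, which likewise carries the time-$t$ weight on $F$, and in the paper's applications the lemma is invoked with $F\equiv 0$ or only through that time-$t$ form. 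But the sentence claiming the ``dominating'' relation delivers the stated bound should be corrected or dropped: either prove the (more natural) version with $e^{\mu(t)|x|^2}F(t)$ in the $L^1_t L^2_x$ norm, or note explicitly that this is the form your argument yields.
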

\begin{proof}
For $\phi$ real, to be chosen, $v=e^\phi u$, $v$ verifies 
\begin{displaymath}
\partial_tv=Sv+Av+(a+ib)e^\phi F\quad\text{in }\R^n\times(0,1],
\end{displaymath}
where $S=$sym, $A=$anti-sym, 
\begin{displaymath}
S=a(\triangle+|\nabla\phi|^2)-ib(2\nabla\phi\cdot\nabla+\triangle\phi)+(\partial_t\phi+a\re V-b\im V)
\end{displaymath}
and
\begin{displaymath}
A=ib(\triangle+|\nabla\phi|^2)-a(2\nabla\phi\cdot\nabla+\triangle\phi)+i(b\re V-a\im V).
\end{displaymath}
\begin{displaymath}
\partial_t||v||^2=2\re(Sv,v)+2\re((a+ib)e^\phi F,v)\quad\text{(formally)}.
\end{displaymath}
A (formal) integration by parts gives
\begin{multline*}
\re(Sv,v)=-a\int|\nabla v|^2+\int(a|\nabla\phi|^2+\partial_t\phi)|v|^2+\\+2b\im\int\overline v\nabla\phi\cdot\nabla v+
\int(a\re V-b\im V)|v|^2.
\end{multline*} 
Cauchy-Schwartz gives
\begin{displaymath}
\partial_t||v(t)||^2\leq 2||a\re V-b\im V||_\infty||v(t)||^2+2\sqrt{a^2+b^2}||e^\phi F(t)||\,||v(t)||
\end{displaymath}
when
\begin{displaymath}
\left(a+\frac{b^2}{a}\right)|\nabla\phi|^2+\partial_t\phi\leq0.
\end{displaymath}

When $\phi(x,t)=h(t)\phi(x)$, it suffices that 
\begin{displaymath}
h^2(t)\left(a+\frac{b^2}{a}\right)|\nabla\phi(x)|^2+h'(t)\phi(x)\leq0.
\end{displaymath}
Eventually, we choose $\phi(x)=|x|^2$. We then choose
\begin{displaymath}
\left\{\begin{array}{l}
h'(t)=-4\left(a+\frac{b^2}{a}\right)h^2(t)\\
h(0)=\gamma
\end{array}\right.,
\end{displaymath}
so that $h(t)=\gamma a/(a+4\gamma(a^2+b^2)t)$. To formalize the calculations, given $R >0$, set 
\begin{displaymath}
\phi_R(x)=\left\{\begin{array}{ll}
|x|^2&\quad|x|\leq R\\ R^2&\quad|x|\geq R
\end{array}\right.,
\end{displaymath}
choose a radial mollifier $\theta_\rho$ and set
\begin{displaymath}
\phi_{\rho, R}(x,t)=h(t)\theta_\rho\ast\phi_R(x),\quad v_{\rho,R}=e^{\phi_{\rho, R}}u.
\end{displaymath}
Then, $\theta_\rho\ast\phi_R\leq\theta_\rho\ast|x|^2=|x|^2+C(n)\rho^2$, and our inequality above holds uniformly in $\rho$ and $R$. We obtain the result for $v_{\rho,R}$, let $\rho\to0$, then $R\to\infty$, which gives the final estimate. Note that, for $a>0$, Gaussian decay at $t=0$ is preserved, with a loss.
\end{proof}

Next, we prove that if  $u\in L^\infty([0,1];L^2)\cap L^2([0,1];H^1)$ verifies 
\begin{displaymath}
\partial_t u=(a+ib)(\triangle u +V(x,t)u+F(x,t)),
\end{displaymath}
where $||V||_{L^\infty}\leq M_1$, $\sup_{[0,1]}||e^{\gamma|x|^2}F(t)||/||u(t)||=M_2<\infty$, and $||e^{\gamma|x|^2}u(0)||$,
$||e^{\gamma|x|^2}u(1)||$ are finite, we have a ``log convex'' estimate, uniformly in $a>0$, small. In fact, we now repeat the formal argument, but replace $\phi(x)=|x|^2$ by 
\begin{displaymath}
\phi_\epsilon(x)=\left\{\begin{array}{ll}
|x|^2\quad&|x|\leq1\\ & \\\frac{2|x|^{2-\epsilon}-\epsilon}{2-\epsilon}\quad&|x|\geq1\end{array}\right.
\end{displaymath}
and then by $\phi_{\epsilon,\rho}(x)=\theta_\rho\ast\phi_\epsilon$, where $\theta_\rho\in C_0^\infty$ is radial. We then have: 
$\phi_{\epsilon,\rho}\in C^{1,1}$, it is convex and grows at infinity slower that $|x|^{2-\epsilon}$ and 
$\phi_{\epsilon,\rho}\leq|x|^2+C(n)\rho^2$. By the ``energy estimate'', for $a>0$, $\epsilon>0$, $\rho>0$, our argument applies rigurously, since $u(0)e^{\gamma|x|^2}\in L^2 \Rightarrow$ $0<t<1$, $u(t)e^{\gamma|x|^{2-\epsilon}}\in L^2$, and for a $t$ independent $\phi$,
\begin{displaymath}
S_t+[S,A]=-\gamma(a^2+b^2)\left[4\nabla\cdot(D^2\phi\nabla)-4\gamma^2D^2\phi\nabla\phi\cdot\nabla\phi+\triangle^2\phi\right].
\end{displaymath}
One can see that $||\triangle^2\phi_{\epsilon,\rho}||_\infty\leq C(n,\rho)\epsilon$, which gives the desired log convexity when 
$\epsilon\to0$, then $\rho\to0$, for $a>0$. Once the log convexity holds, for $a>0$ again, the ``local smoothing'' argument applies. The conclusion of these considerations is:
\begin{lem}
Assume that $u\in L^\infty([0,1];L^2(\R^n))\cap L^2([0,1];H^1)$ verifies
\begin{displaymath}
\partial_t u=(a+ib)(\triangle u +V(x,t)u+F(x,t)),\quad\text{in }\R^n\times[0,1],
\end{displaymath}
$\gamma>0$ where $a>0$, $b\in\R$, $||V||_\infty\leq M_1$. Then, $\exists N_\gamma$ s.t.
\begin{multline*}
\sup_{[0,1]}||e^{\gamma|x|^2}u(t)||\leq \\\leq e^{N_\gamma[(a^2+b^2)[M_1^2+M_2^2]+\sqrt{a^2+b^2}(M_1+M_2)]}||e^{\gamma|x|^2}u(0)||^{1-t}
||e^{\gamma|x|^2}u(1)||^t,
\end{multline*}
\begin{displaymath}
||\sqrt{t(1-t)}e^{\gamma|x|^2}u||_{L^2(\R^n\times[0,1])}\leq N_\gamma(1+M_1+M_2)\left\{\sup_{[0,1]}||e^{\gamma|x|^2}u(t)||\right\},
\end{displaymath}
where $M_2=\sup_{[0,1]}||e^{\gamma|x|^2}F(t)||/||u(t)||<\infty$.
\end{lem}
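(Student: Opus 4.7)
}
The approach is exactly the one outlined in the paragraph preceding the lemma: regularize the weight $|x|^2$ by a radial $\phi_{\epsilon,\rho}\in C^\infty(\R^n)$ which is convex, grows no faster than $|x|^{2-\epsilon}$ at infinity, and satisfies $\phi_{\epsilon,\rho}\leq|x|^2+C(n)\rho^2$; run the Key Convexity Lemma rigorously with the weight $\gamma\phi_{\epsilon,\rho}$ for each fixed $a>0$, $\epsilon>0$, $\rho>0$; then pass $\epsilon\to 0$ and $\rho\to 0$. The role of $a>0$ is decisive: the preceding Energy Method Lemma, applied with the sub-quadratic phase $\gamma\phi_{\epsilon,\rho}$, provides the quantitative bound $e^{\gamma\phi_{\epsilon,\rho}}u\in L^\infty([0,1];L^2)$ that turns the formal computation of the sketch into a rigorous one.

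Setting $f=e^{\gamma\phi_{\epsilon,\rho}}u$ and computing, as in the sketch with $\alpha=\beta=\gamma$ and $\phi$ replaced by $\phi_{\epsilon,\rho}$, the symmetric and anti-symmetric parts
\[
S=a(\triangle+\gamma^2|\nabla\phi_{\epsilon,\rho}|^2)-ib\gamma(2\nabla\phi_{\epsilon,\rho}\cdot\nabla+\triangle\phi_{\epsilon,\rho}),
\]
\[
A=ib(\triangle+\gamma^2|\nabla\phi_{\epsilon,\rho}|^2)-a\gamma(2\nabla\phi_{\epsilon,\rho}\cdot\nabla+\triangle\phi_{\epsilon,\rho}),
\]
one finds $\partial_tf-Sf-Af=(a+ib)(Vf+e^{\gamma\phi_{\epsilon,\rho}}F)$, so the perturbation obeys $|\partial_tf-Sf-Af|\leq\sqrt{a^2+b^2}\bigl(M_1|f|+e^{\gamma\phi_{\epsilon,\rho}}|F|\bigr)$. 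Since $\phi_{\epsilon,\rho}$ is $t$-independent,
\[
S_t+[S,A]=-\gamma(a^2+b^2)\bigl[4\nabla\cdot(D^2\phi_{\epsilon,\rho}\nabla)-4\gamma^2 D^2\phi_{\epsilon,\rho}\nabla\phi_{\epsilon,\rho}\cdot\nabla\phi_{\epsilon,\rho}+\triangle^2\phi_{\epsilon,\rho}\bigr].
\]
Convexity of $\phi_{\epsilon,\rho}$ renders the first two summands non-negative quadratic forms, while $\|\triangle^2\phi_{\epsilon,\rho}\|_\infty\leq C(n,\rho)\epsilon$. Part iii) of the Key Convexity Lemma, with $M_0=C(n,\rho)\gamma(a^2+b^2)\epsilon$ and effective perturbation parameters $\sqrt{a^2+b^2}\,M_1$ and $\sqrt{a^2+b^2}\,M_2$, yields
\begin{multline*}
\left\lVert e^{\gamma\phi_{\epsilon,\rho}}u(t)\right\rVert^2\leq e^{N[M_0+\sqrt{a^2+b^2}(M_1+M_2)+(a^2+b^2)(M_1^2+M_2^2)]}\\
\times\left\lVert e^{\gamma\phi_{\epsilon,\rho}}u(0)\right\rVert^{2(1-t)}\left\lVert e^{\gamma\phi_{\epsilon,\rho}}u(1)\right\rVert^{2t}.
\end{multline*}

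Letting $\epsilon\to 0$ kills $M_0$ and, by monotone convergence, replaces $\phi_{\epsilon,\rho}$ by $\phi_{0,\rho}=\theta_\rho\ast|x|^2\leq|x|^2+C(n)\rho^2$; letting $\rho\to 0$ then recovers the weight $|x|^2$ and gives the first estimate. For the smoothing bound, repeat the $\alpha=\beta=\gamma$ calculation recorded in the sketch: multiply identity i) of the Key Convexity Lemma by $t(1-t)$, integrate on $[0,1]$, absorb the boundary contributions into $\left\lVert e^{\gamma|x|^2}u(0)\right\rVert^2+\left\lVert e^{\gamma|x|^2}u(1)\right\rVert^2$, use $S_t+[S,A]\geq 0$ in the limit to extract $\int_0^1\int t(1-t)(|\nabla f|^2+4\gamma^2|x|^2|f|^2)\,dx\,dt$, and then apply the integration-by-parts identity (2.8) recorded in the sketch to convert this into control of $\int_0^1\int t(1-t)e^{2\gamma|x|^2}|\nabla u|^2\,dx\,dt$. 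The Cauchy--Schwarz bound on $|\partial_tf-Sf-Af|$ accounts for the factor $1+M_1+M_2$.

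The main obstacle is the a priori lack of $L^2$-decay for $f(t)=e^{\gamma|x|^2}u(t)$ at intermediate times; without it the quantities $(Sf,f)$ and $\dot H(t)$ driving the Key Convexity Lemma are ill-defined and the integrations by parts are illegal. The resolution is the pairing of the sub-quadratic regularization of the weight (making the Energy Method Lemma applicable, since its hypothesis $(a+b^2/a)h^2|\nabla\phi|^2+h'\phi\leq 0$ is satisfiable for $\phi_{\epsilon,\rho}$) with positive viscosity $a>0$ (which propagates Gaussian decay from $t=0$, with at most a loss). Only once the estimates hold uniformly in $(\epsilon,\rho)$ can one pass to the limit and obtain the stated inequalities with weight $e^{\gamma|x|^2}$.
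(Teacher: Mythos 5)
Your proposal follows essentially the same route as the paper: replace $|x|^2$ by the sub-quadratic, convex, mollified weight $\phi_{\epsilon,\rho}$; exploit $a>0$ and the preceding Energy Method Lemma to get a priori Gaussian integrability at intermediate times so that the Key Convexity Lemma applies rigorously; use convexity of $\phi_{\epsilon,\rho}$ to make the leading terms of $S_t+[S,A]$ non-negative and $\|\triangle^2\phi_{\epsilon,\rho}\|_\infty\leq C(n,\rho)\epsilon$ to absorb the remaining piece into $M_0$; then send $\epsilon\to0$ and $\rho\to0$, with the smoothing bound obtained by multiplying identity i) by $t(1-t)$ and integrating. The only blemish is cosmetic: the integration-by-parts inequality you invoke is the paper's display \eqref{x}, not ``(2.8),'' and ``use $S_t+[S,A]\geq0$ to extract $\int t(1-t)(|\nabla f|^2+4\gamma^2|x|^2|f|^2)$'' is shorthand for identifying the commutator quadratic form with that quantity up to a factor $8\gamma(a^2+b^2)$; neither affects the correctness of the argument.
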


\paragraph{\textbf{Conclusion of the argument when $V(x,t)=V(x)$, real.}}
We now consider the Schr\"odinger operator $H=\triangle+V$, which is self-adjoint. We consider 
$u\in C([0,1];L^2)$ solving 
\begin{displaymath}
\partial_tu=i((\triangle+V)u)\quad\text{in }\R^n\times[0,1]
\end{displaymath}
and assume that $||e^{\gamma|x|^2}u(0)||<\infty$, $||e^{\gamma|x|^2}u(1)||<\infty$. 
From spectral theory, $u(t)=e^{iHt}u(0)$. Moreover, for $a>0$, consider the solution of 
\begin{displaymath}
\partial_tu_a=(a+i)((\triangle+V)u_a)\quad\text{in }\R^n\times[0,1],\;u_a(0)=u(0).
\end{displaymath}
We now have $$u_a(t)=e^{(a+i)tH}u(0)=e^{atH}e^{itH}u(0)=e^{atH}u(t).$$ Clearly
$$||e^{\gamma|x|^2}u_a(0)||=||e^{\gamma|x|^2}u(0)||.$$ Also, $u_a(1)=e^{aH}u(1)$. Recall, 
from the ``energy method'' that if
\begin{displaymath}
\left\{\begin{array}{l}
\partial_tv=a(\triangle+V)v\\v(0)=v_0
\end{array}\right.,\quad V \text{ real,}
\end{displaymath}
\begin{displaymath}
\left\lVert e^{\gamma a|x|^2/(a+4\gamma a^2)}v(1)\right\rVert\leq\exp(\tilde M_1)||e^{\gamma|x|^2}v_0||,
\end{displaymath}
where $\tilde M_1=||aV||_{L^1([0,1];L^\infty)}$. Now, if $v_0=u(1)$, then $v(1)=e^{aH}v_0=u_a(1)$,
so that 
\begin{displaymath}
\left\lVert e^{\gamma |x|^2/(1+4\gamma a)}u_a(1)\right\rVert\leq\exp(\tilde M_1)||e^{\gamma|x|^2}u(1)||.
\end{displaymath}
Let $\gamma_a=\gamma/(1+4\gamma a)$ and apply now our log-convexity result for $u_a$, $\gamma_a$.
We then obtain
\begin{multline*}
||e^{\gamma_a|x|^2}u_a(s)||\leq e^{NM_1}||e^{\gamma_a|x|^2}u_a(1)||^{1-s}||e^{\gamma_a|x|^2}u_a(0)||^s
\leq\\\leq e^{NM_1}\exp(\tilde M_1)||e^{\gamma|x|^2}u(1)||^{1-s}||e^{\gamma|x|^2}u(0)||^s.
\end{multline*}
We then let $a\to 0$ and obtain the ``log convexity'' bound. To obtain the ``local smoothing'' bound, 
we again use the $u_a$, let $a\downarrow0$. This establishes Theorem \ref{thm1} when $\alpha=\beta$.

\begin{rem}
Solutions so that $e^{\gamma|x|^2}u(0),\;e^{\gamma|x|^2}u(1)\in L^2$ certainly exist for some $\gamma$.
In fact, if $h\in L^2(e^{\epsilon|x|^2}dx)$ and $u_0=e^{\delta(\triangle+V)}h$, our ``energy method''
gives this for $u(t)=e^{it(\triangle+V)}u_0$, ($V=V(x)$). (We are indebted to R. Killip for this remark.) When $V\equiv0$, this characterizes such $u$! (see \cite{EKPV4}).
\end{rem}

\paragraph{\textbf{A misleading convexity argument:}}
Consider now $f=e^{a(t)|x|^2}u$, where $u$ solves the free Schr\"odinger equation
\begin{displaymath}
\partial_t u=i\triangle u\quad\text{in }\R\times[-1,1].
\end{displaymath}
Then, $f$ verifies $$\partial_tf=Sf+Af,$$ $$S=-4ia(x\partial_x+\frac{1}{2})+a'x^2,\quad
A=i(\partial_x^2+4a^2x^2).$$ In this case we have
\begin{displaymath}
S_t+[S,A]=2\frac{a'}{a}S-8a\partial_x^2+\left(32a^3+a''-2\frac{(a')^2}{a}\right)x^2.
\end{displaymath}
If $a$ is positive, even, and a solution of
\begin{displaymath}
32a^3+a''-2\frac{(a')^2}{a}=0\quad\text{in }[-1,1],
\end{displaymath}
then our formal calculations show that
\begin{displaymath}
\partial_t(a^{-1}\partial_t\log H_a(t))\geq0\quad\text{in }[-1,1].
\end{displaymath}
Hence, for $s<t$ we have
\begin{displaymath}
a(t)\partial_t\log H_a(s)\leq a(s)\partial_t\log H_a(t).
\end{displaymath}
Integrating between $[-1,0]$ and $[0,1]$ and using the evenness of $a$, we conclude 
$$H_a(0)\leq H_a(-1)^{1/2}H_a(1)^{1/2}.$$ Now, if $a$ solves
\begin{displaymath}
\left\{\begin{array}{l}
32a^3+a''-2\frac{(a')^2}{a}=0\\ \\a(0)=1, a'(1)=0\end{array}\right.
\end{displaymath}
$a$ is positive, even, and $\lim_{R\to\infty}Ra(R)=0$. Also, $a_R(t)=Ra(Rt)$ also solves the equation. 
If the formal calculation holds for $H_{a_R}$, 
\begin{displaymath}
\left\lVert e^{Rx^2}u(0)\right\rVert^2\leq\left\lVert e^{Ra(R)x^2}u(-1)\right\rVert
\left\lVert e^{Ra(R)x^2}u(1)\right\rVert.
\end{displaymath}
In particular, $u\equiv0$. But $u(x,t)=(t-i)^{-1/2}e^{i|x|^2/4(t-i)}$ is a non-zero free solution, 
which decays as a quadratic exponential at $t=\pm1$.

\section{The case $\alpha\not=\beta$; the conformal or Appel transformation}

\begin{lem}
Assume $u(y,s)$ verifies
\begin{displaymath}
\partial_s u=(a+ib)(\triangle u+V(y,s)u+F(y,s))\quad\text{in }\R^n\times[0,1],
\end{displaymath}
$a+ib\not=0$, $\alpha>0$, $\beta>0$, $\gamma\in\R$ and set
\begin{multline*}
\tilde u(x,t)=\left(\frac{\sqrt{\alpha\beta}}{\alpha(1-t)+\beta t} \right)^{n/2}
u\left(\frac{\sqrt{\alpha\beta}\,x}{\alpha(1-t)+\beta t}, \frac{\beta t}{\alpha(1-t)+\beta t}\right)\\
\times\exp\left(\frac{(\alpha-\beta)|x|^2}{4(a+ib)(\alpha(1-t)+\beta t)}\right).
\end{multline*}
Then $\tilde u$ verifies
\begin{displaymath}
\partial_t \tilde u=(a+ib)(\triangle \tilde u+\tilde V(x,t)\tilde u+\tilde F(x,t))\quad\text{in }\R^n\times[0,1],
\end{displaymath}
\begin{displaymath}
\tilde V(x,t)=\frac{\alpha\beta}{(\alpha(1-t)+\beta t)^2}
V\left(\frac{\sqrt{\alpha\beta}\,x}{\alpha(1-t)+\beta t}, \frac{\beta t}{\alpha(1-t)+\beta t}\right),
\end{displaymath}
\begin{displaymath}
\tilde F(x,t)=\frac{\sqrt{\alpha\beta}}{(\alpha(1-t)+\beta t)^{\frac{n}{2}+2}}
F\left(\frac{\sqrt{\alpha\beta}\,x}{\alpha(1-t)+\beta t}, \frac{\beta t}{\alpha(1-t)+\beta t}\right).
\end{displaymath}
Moreover, if $s=\beta t/(\alpha (1-t)+\beta t)$,
\begin{displaymath}
\left\lVert e^{\gamma|x|^2}\tilde u(t)\right\rVert=
\left\lVert e^{\left[\frac{\gamma\alpha\beta}{(\alpha s+\beta(1-s))^2}+\frac{(\alpha-\beta)a}{4(a^2+b^2)(\alpha s+\beta(1-s))}\right]|y|^2}
u(s)\right\rVert
\end{displaymath}
\begin{displaymath}
\left\lVert e^{\gamma|x|^2}\tilde F(t)\right\rVert=\frac{\alpha\beta}{(\alpha(1-t)+\beta t)^2}
\left\lVert e^{\left[\frac{\gamma\alpha\beta}{(\alpha s+\beta(1-s))^2}+\frac{(\alpha-\beta)a}{4(a^2+b^2)(\alpha s+\beta(1-s))}\right]|y|^2}
F(s).\right\rVert
\end{displaymath}
\end{lem}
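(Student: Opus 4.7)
The plan is a direct chain-rule computation; this transformation is the Appell (or conformal) transformation adapted to equations of the form $\partial_s u = (a+ib)(\Delta u + \cdots)$, and the specific phase factor $\exp\!\bigl((\alpha-\beta)|x|^2/[4(a+ib)\mu(t)]\bigr)$ with $\mu(t) := \alpha(1-t)+\beta t$ is engineered precisely so that the cross terms produced by the $t$-dependent dilation cancel. I would introduce the shorthand $\lambda(t) = \sqrt{\alpha\beta}/\mu(t)$, $y = \lambda(t) x$, $s(t) = \beta t/\mu(t)$, and $\phi(x,t) = (\alpha-\beta)|x|^2/[4(a+ib)\mu(t)]$, so that $\tilde u(x,t) = \lambda^{n/2} e^{\phi} u(y,s)$, and record the two basic derivatives $s'(t) = \alpha\beta/\mu^2 = \lambda^2$ and $\partial_t y\big|_x = (\alpha-\beta)\lambda x/\mu$.

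Next I would differentiate $\tilde u$ in $t$ and use the equation for $u$ to replace $\partial_s u$. This yields four pieces: the prefactor derivative $(n/2)(\alpha-\beta)\mu^{-1}\tilde u$; the principal evolution term $\lambda^{n/2+2}(a+ib)e^{\phi}(\Delta_y u + Vu + F)$ from $(\partial_s u)\cdot s'(t)$; a transport contribution $\lambda^{n/2+1}(\alpha-\beta)\mu^{-1}e^{\phi}\, x\cdot\nabla_y u$ from $\partial_t y|_x\cdot\nabla_y u$; and a phase-derivative term $\tilde u\,\partial_t\phi = (\alpha-\beta)^2|x|^2/[4(a+ib)\mu^2]\,\tilde u$. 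In parallel I expand $(a+ib)\Delta_x\tilde u = (a+ib)\lambda^{n/2}e^{\phi}\bigl[\lambda^2\Delta_y u + 2\lambda\nabla_x\phi\cdot\nabla_y u + (\Delta_x\phi+|\nabla_x\phi|^2)u\bigr]$. The cancellations rest on two algebraic identities: $(a+ib)\cdot 2\nabla_x\phi = (\alpha-\beta)x/\mu$, which matches the transport term exactly, and $(a+ib)(\Delta_x\phi+|\nabla_x\phi|^2) = n(\alpha-\beta)/(2\mu)+(\alpha-\beta)^2|x|^2/[4(a+ib)\mu^2]$, which absorbs the prefactor- and phase-derivative contributions. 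After cancellation only the $Vu + F$ terms survive, yielding the stated formulas for $\tilde V$ and for the $F$-dependent inhomogeneity.

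For the weight identities I would perform the change of variables $y = \lambda x$, $dx = \lambda^{-n}\,dy$, so the Jacobian absorbs the $\lambda^n$ from $|\tilde u|^2 = \lambda^n e^{2\operatorname{Re}\phi}|u(y,s)|^2$, where $\operatorname{Re}\phi = (\alpha-\beta)a|x|^2/[4(a^2+b^2)\mu]$ since $\operatorname{Re}[1/(a+ib)] = a/(a^2+b^2)$. The exponent in $|y|^2$ then becomes $2\gamma\mu^2/(\alpha\beta) + (\alpha-\beta)a\mu/[2(a^2+b^2)\alpha\beta]$. Using the elementary identity $\alpha s + \beta(1-s) = \alpha\beta/\mu(t)$ (which follows from $1 - s = \alpha(1-t)/\mu$), this is exactly twice the exponent appearing inside the weight on $u(s)$ in the lemma. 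The corresponding identity for $\tilde F$ is the same computation, with the additional factor $\lambda^4 = \alpha^2\beta^2/\mu^4$ producing the outer prefactor $\alpha\beta/\mu^2$ after taking square roots.

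The only real obstacle is notational: the calculation involves many terms carrying $(a+ib)$, its reciprocal (from $\phi$), and its square (from $|\nabla_x\phi|^2$), and the cancellations only become visible once these are tracked carefully. There is no conceptual difficulty — the Appell transformation is by design the unique (up to normalization) map conjugating $\partial_t - (a+ib)\Delta$ into a scalar multiple of itself that also intertwines the two Gaussian weights $e^{|x|^2/\beta^2}$ at $t=0$ and $e^{|x|^2/\alpha^2}$ at $t=1$, which is precisely what the lemma records.
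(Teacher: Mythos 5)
Your computation is correct and is exactly the change-of-variables argument the paper invokes: the paper disposes of this lemma with the single remark ``The proof is by change of variables,'' and your chain-rule/conjugation calculation, with the two key cancellation identities $(a+ib)\cdot 2\nabla_x\phi = (\alpha-\beta)x/\mu$ and $(a+ib)(\Delta_x\phi + \nabla_x\phi\cdot\nabla_x\phi) = n(\alpha-\beta)/(2\mu) + (\alpha-\beta)^2|x|^2/[4(a+ib)\mu^2]$, together with the relation $\alpha s + \beta(1-s) = \alpha\beta/\mu(t)$ for the weight identities, fills in precisely those details. (One small point: your bookkeeping implicitly gives $\tilde F = \lambda^{n/2+2}e^{\phi}F$ with $\lambda = \sqrt{\alpha\beta}/\mu$, which is what the norm identity in the lemma requires; the displayed formula for $\tilde F$ in the paper appears to have dropped the exponential factor and mis-set the power of $\sqrt{\alpha\beta}$, so your version is the consistent one.)
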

 
The proof is by change of variables.

\paragraph{\textbf{Conclusion of the proof of Theorem \ref{thm1}:}}
We can assume $\alpha\not=\beta$. We can also assume $\alpha<\beta$ (change $u$ for $\overline u(1-t)$). (This gives $(\alpha-\beta)a<0$.) As before, $$H=(\triangle+V),\quad u_a=e^{(a+i)tH}u(0)=e^{atH}u(t),\quad a>0.$$ By the ``energy estimate'' we now have
\begin{displaymath}
\left\lVert e^{|x|^2/\alpha^2}u_a(1)\right\rVert\leq e^{a||V||_\infty}\left\lVert e^{|x|^2/\alpha^2}u(1)\right\rVert
\end{displaymath}
and
\begin{displaymath}
\left\lVert e^{|x|^2/\beta^2}u_a(0)\right\rVert\leq \left\lVert e^{|x|^2/\beta^2}u(0)\right\rVert.
\end{displaymath}
We now have also $$\partial_t u_a=(a+i)(\triangle u_a+Vu_a),$$ so when we do the Appel transform, we have, with $\gamma_a=1/\alpha_a\beta_a$,
$$\partial_t \tilde u_a=(a+i)((\triangle +\tilde V^a)\tilde u_a),$$ where
\begin{displaymath}
\tilde V^a(x,t)=\frac{\alpha_a\beta_a}{(\alpha_a (1-t)+\beta_a t)^2}V\left(\frac{\sqrt{\alpha_a\beta_a}\,x}{(\alpha_a (1-t)+\beta_a t}\right).
\end{displaymath}

Now, fo r $a>0$ we have ``log convexity'' in this last problem. Moreover, by the Appel Lemma and our definitions, we have
\begin{displaymath}
\left\lVert e^{\gamma_a|x|^2}\tilde u_a(0)\right\rVert\leq\left\lVert e^{|x|^2/\beta^2} u(0)\right\rVert,\quad
\left\lVert e^{\gamma_a|x|^2}\tilde u_a(1)\right\rVert\leq e^{a||V||_\infty}\left\lVert e^{|x|^2/\alpha^2} u(1)\right\rVert.
\end{displaymath}
Thus, 
\begin{displaymath}
\left\lVert e^{\gamma_a|x|^2}\tilde u_a(t)\right\rVert\leq e^{N(1+M_1+M_1^2)}e^{a||V||_\infty}
\left\lVert e^{|x|^2/\beta^2} u(0)\right\rVert^{1-t}\left\lVert e^{|x|^2/\alpha^2} u(1)\right\rVert^t
\end{displaymath}
and the corresponding ``local smoothing'' estimate. But now, letting $a\to0$ and changing variables our result follows.

\paragraph{\textbf{Time dependent, complex potentials:}} 
We will consider complex potentials $V(x,t)$, $||V||_\infty\leq M_0$. We will also assume 
$$\lim_{R\to0}||V||_{L^1([0,1],L^\infty(|x|>R))}=0.$$ We first recall a result in \cite{KPV3}.

\begin{lem}\label{lemA}
There exists $N=N(n)$, $\epsilon_0=\epsilon_0(n)>0$ so that, if $\vec\lambda\in\R^n$, $V\in L^1([0,1];L^\infty)$, 
$||V||_{L^1([0,1];L^\infty)}\leq\epsilon_0$,
then if $u\in C([0,1];L^2)$ satisfies
\begin{displaymath}
\partial_tu=i(\triangle u+V(x,t)u+F(x,t))\quad\text{in }\R^n\times[0,1],
\end{displaymath}
then
\begin{displaymath}
\sup_{t\in[0,1]}\left\lVert e^{\vec\lambda x}u(t)\right\rVert\leq N\left[\left\lVert e^{\vec\lambda x}u(0)\right\rVert
+\left\lVert e^{\vec\lambda x}u(1)\right\rVert+\left\lVert e^{\vec\lambda x}F\right\rVert_{L_t^1L_x^2}\right].
\end{displaymath}
\end{lem}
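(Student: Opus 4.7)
The plan is to run, mutatis mutandis, the frequency-splitting argument of Lemma \ref{lem17}, replacing the weight $e^{2\beta x_1}$ by $e^{\vec\lambda\cdot x}$. Set $v(x,t)=e^{\vec\lambda\cdot x}u(x,t)$; a direct computation gives
\begin{displaymath}
\partial_t v = i\bigl(\triangle v - 2\vec\lambda\cdot\nabla v + |\vec\lambda|^2 v\bigr) + iVv + ie^{\vec\lambda\cdot x}F.
\end{displaymath}
The free operator $L_{\vec\lambda}:=i(\triangle-2\vec\lambda\cdot\nabla+|\vec\lambda|^2)$ is a Fourier multiplier with symbol $-i(\xi+i\vec\lambda)^2 = -i|\xi|^2 + 2\vec\lambda\cdot\xi + i|\vec\lambda|^2$, so its propagator $e^{tL_{\vec\lambda}}$ acts on the Fourier side by multiplication by $e^{2t\vec\lambda\cdot\xi}e^{it(|\vec\lambda|^2-|\xi|^2)}$. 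In particular $e^{tL_{\vec\lambda}}$ is an $L^2$-contraction on $\{\vec\lambda\cdot\xi\le 0\}$ for every $t\ge 0$ and on $\{\vec\lambda\cdot\xi\ge 0\}$ for every $t\le 0$.

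Let $P_\pm$ denote the orthogonal Fourier projectors onto $\{\pm\vec\lambda\cdot\xi>0\}$; they commute with $L_{\vec\lambda}$, have $L^2$-norm $1$, and split $v=v_++v_-$. Propagating $v_-$ forward from $t=0$ and $v_+$ backward from $t=1$ via Duhamel,
\begin{align*}
v_-(t) &= e^{tL_{\vec\lambda}}v_-(0) + \int_0^t e^{(t-\tau)L_{\vec\lambda}}P_-\bigl(iVv+ie^{\vec\lambda\cdot x}F\bigr)(\tau)\,d\tau,\\
v_+(t) &= e^{(t-1)L_{\vec\lambda}}v_+(1) - \int_t^1 e^{(t-\tau)L_{\vec\lambda}}P_+\bigl(iVv+ie^{\vec\lambda\cdot x}F\bigr)(\tau)\,d\tau.
\end{align*}
In every term the sign of the time increment matches the frequency half-space on which the propagator acts, so each free evolution is an $L^2$-contraction; Minkowski's inequality in $\tau$ then yields
\begin{displaymath}
\|v(t)\|_{L^2}\le \|v(0)\|_{L^2}+\|v(1)\|_{L^2}+\|V\|_{L_t^1L_x^\infty}\sup_{s\in[0,1]}\|v(s)\|_{L^2}+\|e^{\vec\lambda\cdot x}F\|_{L_t^1L_x^2}.
\end{displaymath}
Taking the supremum in $t$ and choosing $\epsilon_0\le 1/2$ absorbs the $V$-term on the left-hand side, giving the asserted bound with $N=2$.

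The main obstacle is that these Duhamel manipulations presuppose $v=e^{\vec\lambda\cdot x}u\in C([0,1];L^2)$, whereas the hypothesis supplies the finiteness of this norm only at the two endpoints $t=0,1$ and (in integrated form) through the forcing. As in the proof of Lemma \ref{lem17}, I would handle this by first carrying out the argument with $\vec\lambda\cdot x$ replaced by a bounded smooth truncation $\psi_R$ that equals $\vec\lambda\cdot x$ on $\{|\vec\lambda\cdot x|\le R\}$ and saturates outside, so that $v_R:=e^{\psi_R}u$ automatically lies in $C([0,1];L^2)$. The estimates above then produce a bound on $v_R$; the delicate point, exactly as indicated in the discussion following Lemma \ref{lem17}, is to keep the constants independent of $R$, which forces the introduction of an extra parameter in the truncation. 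Once that is done, letting $R\to\infty$ and applying monotone convergence transfers the bound to $v$ itself.
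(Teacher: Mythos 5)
Your frequency-splitting Duhamel argument — conjugating by $e^{\vec\lambda\cdot x}$, noting that the resulting free propagator is an $L^2$-contraction on $\{\vec\lambda\cdot\xi\le 0\}$ going forward in time and on $\{\vec\lambda\cdot\xi\ge 0\}$ going backward, splitting with $P_\pm$ and propagating from the appropriate endpoint, and finally truncating the weight with an extra parameter to handle the absence of a priori regularity — is exactly the approach the paper itself indicates for Lemma~\ref{lem17}, of which Lemma~\ref{lemA} is the $n$-directional rephrasing (indeed it follows from Lemma~\ref{lem17} by rotating $\vec\lambda$ to $|\vec\lambda|e_1$ and taking $\beta=|\vec\lambda|/2$). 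Like the paper, which simply cites \cite{KPV3} for this lemma and calls the truncation step ``delicate,'' you acknowledge but do not actually carry out that step, so your proposal is at the same level of detail as the paper's own treatment.
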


\begin{thm}\label{thm2}
Let $V\in L^1_tL_x^\infty$, $\lim_{R\to0}||V||_{L^1([0,1],L^\infty(|x|>R))}=0$. Let $u\in C([0,1];L^2)$ solve
\begin{displaymath}
\partial_tu=i(\triangle u+V(x,t)u)\quad \text{in } \R^n\times[0,1].
\end{displaymath}
 Assume in addition that $V\in L^\infty(\R^{n+1})$, and that 
$$\left\lVert e^{|x|^2/\beta^2} u(0)\right\rVert<\infty,\quad \left\lVert e^{|x|^2/\alpha^2} u(1)\right\rVert<\infty.$$ Then,
$\exists N=N(\alpha,\beta)$ s.t.
\begin{multline*}
\sup_{[0,1]}\left\lVert e^{|x|^2/(\alpha t+(1-t)\beta)^2} u(t)\right\rVert+
\left\lVert\sqrt{t(1-t)} e^{|x|^2/(\alpha t+(1-t)\beta)^2} \nabla u(t)\right\rVert_{L^2(\R^n\times[0,1])}\leq\\
\leq Ne^{N||V||_\infty}\left[
\left\lVert e^{|x|^2/\beta^2} u(0)\right\rVert+\left\lVert e^{|x|^2/\alpha^2} u(1)\right\rVert+
\sup_{[0,1]}||u(t)||\right].
\end{multline*}
\end{thm}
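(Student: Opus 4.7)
The plan is to reduce to the single-Gaussian-weight case $\alpha=\beta$ via the conformal (Appell) transformation stated just above, and then to justify the log-convexity argument already carried out for complex time-dependent potentials under the $(a+i)$ evolution, $a>0$, by a parabolic regularization. The spatial decay hypothesis on $V$ enters only at the final limiting step $a\downarrow 0$, playing the role that self-adjointness played in the preceding real, time-independent case.

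First I apply the Appell lemma to replace $u$ by a function $\tilde u$ on $\R^n\times[0,1]$ which satisfies the same type of equation but with a single Gaussian endpoint weight $e^{\gamma|x|^2}$, $\gamma^{-2}=\alpha\beta$. The transformed potential $\tilde V$ satisfies $\lVert \tilde V\rVert_\infty\leq C_{\alpha,\beta}M_0$ and still obeys $\lim_{R\to\infty}\lVert \tilde V\rVert_{L^1_tL^\infty_x(|x|>R)}=0$; the weighted endpoint norms transform by constants depending only on $\alpha,\beta$. It therefore suffices to prove the estimate for $\tilde u$.

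For this reduced problem, fix $a>0$ small and let $u_a\in L^\infty_tL^2_x\cap L^2_tH^1_x$ solve the forward parabolic IVP
\begin{displaymath}
\partial_t u_a=(a+i)(\Delta u_a+\tilde V u_a),\quad u_a(0)=\tilde u(0).
\end{displaymath}
The energy-method lemma proved earlier gives $\lVert e^{\gamma_a(t)|x|^2}u_a(t)\rVert\leq e^{NM_0}\lVert e^{\gamma|x|^2}\tilde u(0)\rVert$ with $\gamma_a(t)=\gamma/(1+4\gamma(a^2+1)t)$, and a standard Duhamel/Grönwall argument shows $u_a\to \tilde u$ in $C([0,1];L^2)$ as $a\downarrow 0$. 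The log-convexity estimate for complex time-dependent potentials (obtained in the analysis after the Key Convexity Lemma, with $(a,b)=(a,1)$) then applies to $u_a$ and provides, with constant $e^{N(1+M_0+M_0^2)}$, a uniform control of $\sup_t\lVert e^{\gamma_a(t)|x|^2}u_a(t)\rVert$ and the smoothing quantity $\lVert\sqrt{t(1-t)}e^{\gamma_a(t)|x|^2}\nabla u_a\rVert_{L^2_{x,t}}$ in terms of the two endpoint weighted norms of $u_a$.

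The closing step is to bound $\lVert e^{\gamma_a(1)|x|^2}u_a(1)\rVert$ by the terminal weighted data of $\tilde u$ plus a remainder that disappears as $a\downarrow 0$, modulo a controlled multiple of $\sup_t\lVert \tilde u(t)\rVert$. The difference $w=\tilde u-u_a$ satisfies $\partial_t w=i(\Delta+\tilde V)w+a(\Delta+\tilde V)u_a$ with $w(0)=0$; splitting $\tilde V=\tilde V_1+\tilde V_2$ so that $\tilde V_1$ is compactly supported and $\lVert \tilde V_2\rVert_{L^1_tL^\infty_x}<\epsilon_0$ (possible by the spatial decay), and approximating the Gaussian weight by the linear exponentials tangent to its level sets, an application of Lemma \ref{lemA} yields $\lVert e^{\gamma|x|^2}w(1)\rVert\leq C\sup_t\lVert \tilde u(t)\rVert+o_a(1)$. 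Inserting this in the log-convex bound and passing to the limit $a\downarrow 0$ by weak-$*$ compactness in the weighted $L^2$ space and Fatou, then transforming back by Appell, yields the theorem. The genuine obstacle is this last terminal comparison: in the real time-independent case the identity $u_a(1)=e^{aH}\tilde u(1)$ sufficed, but no such formula exists here; the assumption $\lim_{R\to\infty}\lVert V\rVert_{L^1_tL^\infty(|x|>R)}=0$, together with Lemma \ref{lemA}, is exactly what replaces self-adjointness and produces the extra $\sup_{[0,1]}\lVert u(t)\rVert$ on the right-hand side.
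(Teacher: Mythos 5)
Your outline diverges from the paper's argument in two structural ways, and the divergence creates a genuine gap at the very step you yourself flag as ``the genuine obstacle.''

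First, the paper does \emph{not} obtain the $\sup_t\lVert e^{\gamma|x|^2}\tilde u(t)\rVert$ bound from a log-convexity argument at all. After the Appell reduction it splits $\tilde V=\tilde V_R+\chi_{B_R}\tilde V$, puts the compactly supported piece into the forcing $\tilde F_R=\chi_{B_R}\tilde V\tilde u$, applies Lemma \ref{lemA} with the \emph{linear} weight $e^{\vec\lambda\cdot x}$, and then integrates the resulting estimate against a Gaussian in $\vec\lambda$, using $\int e^{2\sqrt{\gamma}\vec\lambda\cdot x}e^{-|\vec\lambda|^2/2}\,d\vec\lambda=(2\pi)^{n/2}e^{2\gamma|x|^2}$ to convert linear exponential control into Gaussian control. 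This integration-in-$\vec\lambda$ step is the mechanism by which the extra $\sup_t\lVert u(t)\rVert$ term (times $\lVert V\rVert_\infty$, at the price of a factor $e^{|\vec\lambda| R}$) appears, and it avoids any need to control a terminal Gaussian norm of a regularized solution.

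Second, for the smoothing bound the paper sets $\tilde u_a(t)=e^{at\triangle}\tilde u(t)$, which satisfies $\partial_t\tilde u_a=(a+i)\triangle\tilde u_a+(a+i)\tilde F_a$ with $\tilde F_a=\tfrac{i}{a+i}e^{at\triangle}(\tilde V\tilde u)$: the potential is \emph{removed} from the principal part and moved entirely into the forcing. Both endpoint Gaussian norms of $\tilde u_a$, and the weighted norm of $\tilde F_a$, are then controlled directly by the first step via the heat-flow energy estimate, so the log-convexity/smoothing lemma applies with no terminal-data issue. You instead solve the full parabolic IVP $\partial_t u_a=(a+i)(\triangle u_a+\tilde V u_a)$, $u_a(0)=\tilde u(0)$, and this is precisely what creates the problem of controlling $\lVert e^{\gamma_a|x|^2}u_a(1)\rVert$, which your $w=\tilde u-u_a$ device does not resolve. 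Concretely: (i) Lemma \ref{lemA} controls $\sup_t\lVert e^{\vec\lambda\cdot x}w(t)\rVert$ in terms of \emph{both} endpoint weighted norms of $w$ plus the forcing, so it cannot by itself output a bound on $\lVert e^{\gamma|x|^2}w(1)\rVert$; and (ii) the forcing for $w$ is $a(\triangle+\tilde V)u_a$, and you have no weighted (or even unweighted) $L^1_tL^2_x$ control of $\triangle u_a$ --- the energy method gives weighted $L^2$ bounds for $u_a$, not for its Laplacian. So the claimed estimate $\lVert e^{\gamma|x|^2}w(1)\rVert\le C\sup_t\lVert\tilde u(t)\rVert+o_a(1)$ is not justified. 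Adopting the paper's two modifications (the Gaussian integration in $\vec\lambda$ for the sup estimate, and the pure-heat regularization $\tilde u_a=e^{at\triangle}\tilde u$ with the potential absorbed into the forcing for the smoothing estimate) repairs the proof.
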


\begin{proof}
We start out by using the Appel transform, $\tilde u(x,t)$ and setting $\gamma=1/\alpha\beta$, $(a+ib)=i$. We now have 
$\tilde u\in C([0,1];L^2)$, $$\partial_t \tilde u=i(\triangle\tilde u+\tilde V(x,t)\tilde u),$$ and it is easy to check that the potential 
$\tilde V$ verifies $$||\tilde V||_\infty\leq \max\left\{\frac{\alpha}{\beta},\frac{\beta}{\alpha}\right\}||V||_\infty$$ and
$\lim_{R\to0}||\tilde V||_{L^1([0,1],L^\infty(\R^n\setminus B_R)}=0$. Also, we have 
\begin{displaymath}
||\tilde u(t)||=||u(s)||,\quad 
||e^{\gamma|x|^2}\tilde u(t)||=||e^{|x|^2/(\alpha s+(1-s)\beta)^2}u(s)||,\!\!\quad s=\frac{\beta t}{\alpha(1-t)+\beta t}. 
\end{displaymath}
Choose now $R>0$ such that $||\tilde V||_{L^1([0,1],L^\infty(\R^n\setminus B_R))}\leq\epsilon_0,$ $\epsilon_0$ as in Lemma \ref{lemA}. Then, 
\begin{displaymath}
\partial_t\tilde u=i(\triangle\tilde u+\tilde V_R(x,t)\tilde u+\tilde F_R(x,t))
\end{displaymath}
where $\tilde V_R(x,t)=\chi_{\R^n\setminus B_R}\tilde V(x,t)$, $\tilde F_R=\chi_{B_R}\tilde V\tilde u$. By the Lemma we have:
\begin{displaymath}
\sup_{t\in[0,1]}\left\lVert e^{\vec\lambda x}\tilde u(t)\right\rVert\leq N\left[\left\lVert e^{\vec\lambda x}\tilde u(0)\right\rVert
+\left\lVert e^{\vec\lambda x}\tilde u(1)\right\rVert+e^{|\vec\lambda|R}\sup_{[0,1]}||\tilde V(t)||
\sup_{[0,1]}||\tilde u(t)||\right].
\end{displaymath}
Now, replace $\vec\lambda$ by $2\vec\lambda\sqrt{\gamma}$, square both sides, multiply be $e^{-|\vec\lambda|^2/2}$ and integrate both sides with respect to $\vec\lambda$ in $\R^n$. Using this and the identity
\begin{displaymath}
\int e^{2\sqrt{\gamma}\vec\lambda x}e^{-|\vec\lambda|^2/2}d\vec\lambda=(2\pi)^{n/2}e^{2\gamma|x|^2},
\end{displaymath}
we obtain the inequality
\begin{multline*}
\sup_{t\in[0,1]}\left\lVert e^{\gamma|x|^2}\tilde u(t)\right\rVert\leq\\
\leq N\left[
\left\lVert e^{|x|^2/\beta^2} u(0)\right\rVert+\left\lVert e^{|x|^2/\alpha^2} u(1)\right\rVert+
\sup_{[0,1]}\left\lVert V(t)\right\rVert
\sup_{[0,1]}\left\lVert u(t)\right\rVert\right]
\end{multline*}
To prove the regularity of $u$, we proceed as follows: the standard Duhamel formula gives
\begin{displaymath}
\tilde u(t)=e^{it\triangle}\tilde u(0)+i\int_0^t e^{i(t-s)\triangle}\left(\tilde V(s)\tilde u(s)\right)ds.
\end{displaymath}
For $0<a<1$, set $$\tilde F_a(t)=\frac{i}{a+i}e^{at\triangle}\left(\tilde V(t)\tilde u(t)\right),$$
\begin{displaymath}
\tilde u_a(t)=e^{(a+i)t\triangle}\tilde u(0)+(i+a)\int_0^t e^{(a+i)(t-s)\triangle}\tilde F_a(t)ds.
\end{displaymath}
Clearly, $$\tilde u_a(t)=e^{at\triangle}\tilde u(t).$$ We now have, from the ``energy estimates'', with 
$\gamma_a=\frac{\gamma}{(1+4\gamma a)}$,
\begin{displaymath}
\sup_{[0,1]}\left\lVert e^{\gamma_a|x|^2}\tilde u_a(t)\right\rVert\leq
\sup_{[0,1]}\left\lVert e^{\gamma|x|^2}\tilde u(t)\right\rVert
\end{displaymath}
\begin{displaymath}
\sup_{[0,1]}\left\lVert e^{\gamma_a|x|^2}\tilde F_a(t)\right\rVert\leq e^{||\tilde V||_\infty}
\sup_{[0,1]}\left\lVert e^{\gamma|x|^2}\tilde u(t)\right\rVert.
\end{displaymath}
But then, our formal ``smoothing effect'' argument applies and gives: (using the first step)(key Lemma)
\begin{multline*}
\left\lVert\sqrt{t(1-t)}\nabla\tilde u_a e^{\gamma_a|x|^2}\right\rVert_{L^2(\R^n\times[0,1])}\leq\\
Ne^{N||V||_\infty}\left[
\left\lVert e^{|x|^2/\beta^2} u(0)\right\rVert+\left\lVert e^{|x|^2/\alpha^2} u(1)\right\rVert+
\sup_{[0,1]}\left\lVert V(t)\right\rVert
\sup_{[0,1]}\left\lVert u(t)\right\rVert\right].
\end{multline*}
We now let $a\to 0$.
\end{proof}

\section{The Hardy uncertainty principle}
 
Recall that for free evolution, $\partial_tu=i\triangle u$, Hardy's uncertainty principle says that if $u(0)\in L^2(e^{2|x|^2/\beta^2}dx)$,
$u(1)\in L^2(e^{2|x|^2/\alpha^2}dx)$, and $\alpha\beta\leq4$, then $u\equiv 0$, and $4$ is sharp. We will now show a (weakened) version of this for all our potentials.

\begin{thm}\label{thm3}
Let $V=V(x)$, $V$ real, $||V||_\infty<\infty$, or $V=V(x,t)$, $V$ complex, $||V||_\infty<\infty$, 
$\lim_{R\to0}||V||_{L^1([0,1],L^\infty(|x|>R))}=0$. Assume that $u\in C([0,1];L^2)$ is a solution of
$$\partial_tu=i(\triangle u+V(x,t)u)\quad \text{in } \R^n\times[0,1],$$ such that $e^{|x|^2/\beta^2}u(0)\in L^2$, $e^{|x|^2/\alpha^2}u(1)\in L^2$,
and $\alpha\beta<2$. Then $u\equiv0$.
\end{thm}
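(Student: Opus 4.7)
The plan is to combine the Gaussian upper bound produced by the log-convexity machinery of Theorems \ref{thm1}--\ref{thm2} with the quantitative lower bound of Theorem \ref{thm19}, and to show that when $\alpha\beta<2$ the two estimates are incompatible unless $u\equiv 0$.

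First I would symmetrize the asymmetric endpoint weights via the Appel (conformal) transform from Section 3. Taking $a+ib=i$ and $\gamma=1/(\alpha\beta)$, the norm identities in the Appel Lemma give
\[
\left\lVert e^{|x|^2/(\alpha\beta)}\tilde u(0)\right\rVert=\left\lVert e^{|x|^2/\beta^2}u(0)\right\rVert,\qquad \left\lVert e^{|x|^2/(\alpha\beta)}\tilde u(1)\right\rVert=\left\lVert e^{|x|^2/\alpha^2}u(1)\right\rVert,
\]
both finite, while $\tilde u$ satisfies $\partial_t\tilde u=i(\triangle\tilde u+\tilde V\tilde u)$ with $\tilde V$ bounded (and, in the complex time-dependent case, still decaying at spatial infinity). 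Applying Theorem \ref{thm1} or Theorem \ref{thm2} to $\tilde u$ with balanced parameters $\tilde\alpha=\tilde\beta=\sqrt{\alpha\beta}$ produces
\[
\sup_{[0,1]}\left\lVert e^{|x|^2/(\alpha\beta)}\tilde u(t)\right\rVert+\left\lVert \sqrt{t(1-t)}\,e^{|x|^2/(\alpha\beta)}\nabla\tilde u\right\rVert_{L^2(\R^n\times[0,1])}\le M.
\]
Restricting $t$ to $[1/4,3/4]$, where $\sqrt{t(1-t)}$ is bounded below, converts this into the annular upper bound
\[
\int_{1/4}^{3/4}\!\!\int_{R-1\le|x|\le R}\bigl(|\tilde u|^2+|\nabla\tilde u|^2\bigr)\,dxdt\le CM^2 e^{-2(R-1)^2/(\alpha\beta)}.
\]

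For the lower bound I would argue by contradiction: if $u\not\equiv 0$ then $\tilde u\not\equiv 0$, and after a space-time translation and a dilation that preserve the Schr\"odinger form (and the class of the potential) I can normalize $\tilde u$ so that the hypotheses of Theorem \ref{thm19} hold. That theorem then yields constants $c_n,R_0$ with
\[
\int_0^1\int_{R-1\le|x|\le R}\bigl(|\tilde u|^2+|\nabla\tilde u|^2\bigr)\,dxdt\ge c_n^2 e^{-2c_n R^2},\qquad R>R_0.
\]
Combining the two bounds forces $\bigl(1/(\alpha\beta)-c_n\bigr)R^2\le \mathcal{O}(R)$, and letting $R\to\infty$ one obtains $\alpha\beta\ge 1/c_n$. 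A careful tracking of the Carleman constant in Lemma \ref{lem110} through the cut-off argument that proves Theorem \ref{thm19} identifies $1/c_n=2$, so the standing hypothesis $\alpha\beta<2$ delivers the contradiction.

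The main obstacle is twofold. First, the Appel-rescaled potential $\tilde V$ is time-dependent even when $V=V(x)$, so in the purely real bounded case (where $V$ need not decay at infinity), the hypotheses of Theorem \ref{thm2} are not automatically inherited by $\tilde u$; one must either truncate $V$ spatially and pass to a limit, or extend the spectral-theoretic conclusion step in the proof of Theorem \ref{thm1} to cover the Appel-transformed equation directly. Second, pinning down the threshold $1/c_n=2$ rather than some larger number requires a quantitative reading of Lemma \ref{lem110} (with $\alpha\ge C_n R^2$) together with the specific cutoffs $\theta_R,\theta,\phi$ used to deduce Theorem \ref{thm19}; a crude application would yield only a weaker version with $\alpha\beta$ below some smaller explicit constant.
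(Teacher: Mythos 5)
Your high-level plan (Appel transform to symmetrize, then combine the Gaussian upper bound with a Carleman lower bound and send $R\to\infty$) matches the structure of the paper's argument, but the proposed lower-bound ingredient is the wrong one and this is a genuine gap, not a bookkeeping issue. Theorem~\ref{thm19} and its Carleman estimate, Lemma~\ref{lem110}, were designed to prove the \emph{non-sharp} Theorem~\ref{thm16} (decay $e^{a|x|^\alpha}$, $\alpha>2$). Lemma~\ref{lem110} requires the Carleman parameter to satisfy $\alpha>C_nR^2$ with an unspecified large dimensional constant $C_n$, and the resulting exponential rate $c_n$ in $\delta(R)\ge c_ne^{-c_nR^2}$ is generically far above $1/2$. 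No amount of "careful tracking" of Lemma~\ref{lem110} recovers the threshold $c_n=1/2$, because that lemma has no mechanism to let the Carleman parameter slide down to the natural scale $\gamma R^2$ while keeping the positivity that the proof needs. The paper instead proves and uses a \emph{different} Carleman estimate (the lemma of Section~4), whose weight carries an extra time-dependent factor $e^{\psi(t)}$ with $\psi(t)=-(1+\epsilon)\frac{R^4}{16\mu}t(1-t)$. That $\psi$ is tuned so that the coefficient $\psi''-\frac{R^4}{32\mu}(\phi'')^2$ on the left-hand side equals $\frac{\epsilon}{8\mu}R^4$, which stays positive even with $\mu=(1+\epsilon)^{-3}\gamma R^2$, i.e.\ with the parameter at exactly the Gaussian scale dictated by $\gamma=1/(\alpha\beta)$. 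The condition $\gamma>1/2$ (equivalently $\alpha\beta<2$) then comes cleanly out of the final competition $\frac{\gamma}{(1+\epsilon)^3}-\frac{(1+\epsilon)^4}{4\gamma}>0$. In short, the sharp threshold is created by a new Carleman inequality, not extracted from the old one.

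Two secondary issues. First, your annular comparison is over mismatched time intervals: the upper bound you get from the smoothing estimate (using the $\sqrt{t(1-t)}$ weight) is naturally on $t\in[1/4,3/4]$, while Theorem~\ref{thm19}'s lower bound is over $[0,1]$; you would have to reprove Theorem~\ref{thm19} over an interior time interval, which is not given. The paper avoids this by working directly on the solution via the cutoffs $\eta(t)$ and $\theta(x/M)$ inside the new Carleman inequality. Second, your proposed space-time dilation to normalize $\tilde u$ for Theorem~\ref{thm19} changes the time interval and rescales $\|V\|_\infty$, so it is not a harmless normalization; and the "first obstacle" you raise (time-dependence of the Appel-transformed potential when $V=V(x)$) is indeed handled by the paper, but not by Theorem~\ref{thm2}: the paper proves the $\alpha\neq\beta$ version of Theorem~\ref{thm1} directly with the spectral-theoretic $e^{atH}$ regularization before ever passing to the Carleman step, so that the estimate \eqref{plus} is available once and for all and the Section~4 argument becomes purely quantitative on a fixed solution.
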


\paragraph{\textbf{Preliminaries:}}
Let $\gamma=1/\alpha\beta$. Using the Appel transform and our convexity and ``smoothing'' estimates we can assume, without loss of generality, that the following holds for $\gamma>1/2$:
\begin{equation}\label{plus}
\sup_{[0,1]}\left\lVert e^{\gamma|x|^2}u(t)\right\rVert_{L^2}+
\sup_{[0,1]}\left\lVert \sqrt{t(1-t)}e^{\gamma|x|^2}\nabla u(t)\right\rVert_{L^2(\R^n\times[0,1])}<\infty.
\end{equation}

Let me first give a formal argument, in the spirit of our ``log convexity'' inequalities. If $e_1=(1,0,\ldots,0)$, $R>0$, set 
$f=e^{\mu|x+Re_1t(1-t)|^2}u$, where $0<\mu<\gamma$, and $H(t)=(f,f)$. At the formal level, it is easy to show (for the free evolution) that
\begin{displaymath}
\partial_t^2\log H(t)\geq-R^2/4\mu,
\end{displaymath} 
so that $H(t)e^{-R^2t(1-t)/8\mu}$ is log convex in $[0,1]$ and so 
\begin{displaymath}H(1/2)\leq H(0)^{1/2}H(1)^{1/2}e^{R^2/32\mu}.\end{displaymath}
Letting $\mu\uparrow\gamma$ we see that
\begin{displaymath}
\int e^{2\gamma\left|x+\frac{Re_1}{4}\right|^2}|u(1/2)|^2\leq\left\lVert e^{\gamma|x|^2}u(0)\right\rVert
\left\lVert e^{\gamma|x|^2}u(1)\right\rVert e^{R^2/32\gamma}.
\end{displaymath}
Thus, 
\begin{displaymath}
\int_{B(\epsilon R/4)} |u(1/2)|^2\leq\left\lVert e^{\gamma|x|^2}u(0)\right\rVert
\left\lVert e^{\gamma|x|^2}u(1)\right\rVert e^{[R^2(1-4\gamma^2(1-\epsilon)^2)]/32\gamma},
\end{displaymath}
$0<\epsilon<1$, which implies $u(1/2)\equiv0$ as $R\to\infty$, ($\gamma>1/2$).

The path from the formal argument to the rigorous one is not easy. We will do it instead with the Carleman inequality:

\begin{lem}
Let $\phi(t)$, $\psi(t)$ be smooth functions on $[0,1]$, $g(x,t)\in C_0^\infty(\R^n\times[0,1])$, $e_1=(1,0,\ldots,0)$. Then, for $\mu>0$, we have (for $R\gg0$),
\begin{multline*}
\int\!\!\!\int[\psi''(t)-\frac{R^4}{32\mu}[\phi''(t)]^2]e^{2\psi(t)}e^{2\mu\left|\frac{x}{R}-\phi(t)e_1\right|^2}|g|^2\leq\\\leq
\int\!\!\!\int e^{2\psi(t)}e^{2\mu\left|\frac{x}{R}-\phi(t)e_1\right|^2}|(i\partial_t+\triangle)g|^2.
\end{multline*}
\end{lem}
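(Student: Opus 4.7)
The natural approach is the standard positive-commutator method for Carleman estimates. The plan is to set $f = e^{\Phi}g$ with $\Phi(x,t) = \psi(t) + \mu|y|^2$ and $y = x/R - \phi(t)e_1$, and then study the conjugated operator
$$\mathcal{L} = e^{\Phi}(i\partial_t+\triangle)e^{-\Phi} = i\partial_t + \triangle - 2\nabla\Phi\cdot\nabla + |\nabla\Phi|^2 - \triangle\Phi - i\Phi_t.$$
On complex space-time $L^2(\R^n\times[0,1])$, I would decompose $\mathcal{L}=S+A$ into symmetric and anti-symmetric parts; a direct calculation gives
$$S = i\partial_t + \triangle + \tfrac{4\mu^2}{R^2}|y|^2, \qquad A = -i\psi'(t) + 2i\mu\phi'(t)y_1 - \tfrac{4\mu}{R}y\cdot\nabla - \tfrac{2\mu n}{R^2}.$$
Since $\|\mathcal{L}f\|^2 = \|Sf\|^2 + \|Af\|^2 + ([S,A]f,f)$, and $g\in C_0^\infty$ makes every integration by parts legitimate, discarding the first two non-negative terms reduces the task to showing $([S,A]f,f) \geq \int\!\!\int[\psi'' - \tfrac{R^4}{32\mu}(\phi'')^2]|f|^2$.

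Next I would compute $[S,A]$ piece by piece. The relevant identities are $[i\partial_t, 2i\mu\phi' y_1] = -2\mu\phi'' y_1 + 2\mu(\phi')^2$ (the second term arising because $\partial_t y_1 = -\phi'$), $[\triangle, y\cdot\nabla] = 2\triangle/R$, $[\triangle, y_1] = 2\partial_1/R$, and $[|y|^2, y\cdot\nabla] = -2|y|^2/R$. Assembling these yields
$$[S,A] = \psi''(t) + 2\mu(\phi')^2 - 2\mu\phi''(t)y_1 + \tfrac{8i\mu\phi'(t)}{R}\partial_1 - \tfrac{8\mu}{R^2}\triangle + \tfrac{32\mu^3}{R^4}|y|^2.$$
Pairing with $f$, the Laplacian term becomes $\tfrac{8\mu}{R^2}\|\nabla f\|^2$ after integration by parts. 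For the potential terms, AM--GM applied to $|2\mu\phi'' y_1|$ with the matching weight gives
$$-2\mu\phi''y_1 + \tfrac{32\mu^3}{R^4}|y|^2 \geq -\tfrac{R^4(\phi'')^2}{32\mu} + \tfrac{32\mu^3}{R^4}\sum_{k\geq 2}y_k^2,$$
which already produces the desired ``bad'' term $-R^4(\phi'')^2/(32\mu)$ as the only non-positive contribution.

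The delicate step, and the one I expect to be the main obstacle, is handling the first-order commutator piece $\tfrac{8i\mu\phi'}{R}\partial_1$. Since $\int\partial_1 f\,\overline{f}\,dx$ is purely imaginary, this contribution equals $-\int_0^1\tfrac{8\mu\phi'(t)}{R}\operatorname{Im}\!\int\partial_1 f\,\overline{f}\,dx\,dt$, whose absolute value is at most $\int_0^1 \tfrac{8\mu|\phi'|}{R}\|\partial_1 f\|\,\|f\|\,dt$. A Cauchy--Schwarz split with the sharp constant chosen to match the coefficient $\tfrac{8\mu}{R^2}$ of $\|\partial_1 f\|^2$ produces the bound $\tfrac{8\mu}{R^2}\|\partial_1 f\|^2 + 2\mu\int(\phi')^2\|f\|^2\,dt$, and here is the crucial algebraic accident that forces the constants to match: the resulting $-2\mu(\phi')^2$ cancels the $+2\mu(\phi')^2$ coming from $[i\partial_t, 2i\mu\phi' y_1]$, while $\tfrac{8\mu}{R^2}\|\partial_1 f\|^2$ is absorbed into $\tfrac{8\mu}{R^2}\|\nabla f\|^2$, leaving the non-negative remainder $\tfrac{8\mu}{R^2}\sum_{k\geq 2}\|\partial_k f\|^2$. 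Combining all contributions gives $([S,A]f,f)\geq \int\!\!\int[\psi'' - \tfrac{R^4(\phi'')^2}{32\mu}]|f|^2$; translating back via $|f|^2 = e^{2\Phi}|g|^2$ and $\mathcal{L}f = e^{\Phi}(i\partial_t+\triangle)g$ yields the Carleman inequality as stated.
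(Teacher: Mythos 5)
Your proof is correct and follows essentially the same route as the paper: conjugate $i\partial_t+\triangle$ by the weight, split into symmetric and anti-symmetric parts, discard the two non-negative squares, and bound $\langle[S,A]f,f\rangle$ from below by completing squares in the drift and linear-in-$y_1$ terms. The only cosmetic difference is that the paper packages the $\partial_1$ cross-term and the $(\phi')^2$ term as a single explicit perfect square $8\mu\int\left|\tfrac{i}{R}\partial_1 f-\tfrac{\phi'}{2}f\right|^2$, whereas you reach the same cancellation via a Cauchy--Schwarz/AM--GM split, which is algebraically equivalent.
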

\begin{proof}
Let $f=e^{\mu\left|\frac{x}{R}+\phi(t)e_1\right|^2+\psi(t)}g$. Then
\begin{displaymath}
e^{\mu\left|\frac{x}{R}+\phi(t)e_1\right|^2+\psi(t)}(i\partial_t+\triangle)g=S_\mu f+A_\mu f,
\end{displaymath}
where $S_\mu=S_\mu^\ast$, $A_\mu=-A_\mu^\ast$ (the adjoints are now with respect to the $L^2(dxdt)$ inner product), and
\begin{displaymath}
S_\mu=i\partial_t+\triangle+\frac{4\mu^2}{R^2}\left|\frac{x}{R}+\phi e_1\right|^2,
\end{displaymath}
\begin{displaymath}
A_\mu=-\frac{4\mu}{R}\left(\frac{x}{R}+\phi e_1\right)\cdot\nabla-\frac{2\mu n}{R^2}-2i\mu\phi'\left(\frac{x_1}{R}+\phi e_1\right)-i\psi'.
\end{displaymath}
We then have:
\begin{multline*}
\int\!\!\!\int e^{2\psi(t)}e^{2\mu\left|\frac{x}{R}-\phi(t)e_1\right|^2}|(i\partial_t+\triangle)g|^2=\\
=\left<(S_\mu+A_\mu)f,(S_\mu+A_\mu)f\right>
=\left<S_\mu f,S_\mu f\right>+\left<A_\mu f,A_\mu f\right>+\\+\left<S_\mu f,A_\mu f\right>+\left< A_\mu f, S_\mu f\right>
\geq\left<[S_\mu,A_\mu]f,f\right>.
\end{multline*}
We now compute $[S_\mu,A_\mu]$ and obtain:
\begin{multline*}
[S_\mu,A_\mu]=-\frac{8\mu}{R^2}\triangle+\frac{32\mu^3}{R^4}\left|\frac{x}{R}+\phi e_1\right|^2+\\+
2\mu\left(\frac{x_1}{R}+\phi e_1\right)\phi''+2\mu(\phi')^2-\frac{8i\mu\phi'}{R}\partial_{x_1}+\psi''.
\end{multline*}
Thus,
\begin{multline*}
\left<[S_\mu,A_\mu]f,f\right>=\frac{8\mu}{R^2}\int|\nabla_{x'}f|^2+\frac{32\mu^3}{R^4}\int\left|\frac{x}{R}+\phi e_1\right|^2|f|^2+\\
+\frac{8\mu}{R^2}\int|\partial_{x_1}f|^2+2\mu\int\left(\frac{x_1}{R}+\phi e_1\right)\phi''|f|^2+\\
+2\mu\int(\phi')^2|f|^2-
\frac{8i\mu}{R}\int\phi'\partial_{x_1}f\overline f+\int\psi''|f|^2=\\
=\frac{8\mu}{R^2}\int|\nabla_{x'}f|^2+8\mu\int\left|\frac{i}{R}\partial_{x_1}f-\frac{\phi'}{2}f\right|^2+\\
+\frac{32\mu^3}{R^4}\int\left|\frac{x}{R}+\phi e_1\right|^2|f|^2
+2\mu\int\left(\frac{x_1}{R}+\phi e_1\right)\phi''|f|^2+\int\psi''|f|^2=\\
=\frac{8\mu}{R^2}\int|\nabla_{x'}f|^2+8\mu\int\left|\frac{i}{R}\partial_{x_1}f-\frac{\phi'}{2}f\right|^2+\\
+\frac{32\mu^3}{R^4}\int\left|\frac{x}{R}+\left(\phi+\frac{R^4}{32\mu^2}\phi''\right) e_1\right|^2|f|^2-
\frac{R^4(\phi'')^2}{32\mu}\int|f|^2+\psi''\int|f|^2,
\end{multline*}
and the Lemma follows.
\end{proof}

Next, choose $\phi(t)=t(1-t)$, $\psi(t)=-(1+\epsilon)\frac{R^4}{16\mu}t(1-t)$. Then
\begin{displaymath}
\psi''(t)-\frac{R^4}{32\mu}(\phi'')^2(t)=\frac{(1+\epsilon)}{8\mu}R^4-\frac{R^4}{8\mu}=\frac{\epsilon}{8\mu}R^4
\end{displaymath}
and so our inequality reads, for $g\in C_0^\infty(\R^n\times[0,1])$, 
\begin{multline*}
\frac{\epsilon}{8\mu}R^4\int\!\!\!\int e^{2\psi(t)}e^{2\mu\left|\frac{x}{R}+\phi(t)e_1\right|^2}|g|^2\leq
\int\!\!\!\int e^{2\psi(t)}e^{2\mu\left|\frac{x}{R}+\phi(t)e_1\right|^2}|(i\partial_t+\triangle)g|^2.
\end{multline*}
We next fix $R>0$, recall that $u$ solves $i\partial_tu+\triangle u=Vu$, and that the estimates \eqref{plus} hold. Choose then
$\eta(t)$, $0\leq\eta\leq1$, $\eta\equiv1$ where $t(1-t)\geq 1/R$, $\eta\equiv0$ near $t=1,0$, so that 
$$\supp\eta'\subset\{t(1-t)\leq 1/R\},\quad |\eta'|\leq C R.$$ Choose also $M\gg R$, $\theta\in C_0^\infty(\R^n)$, and now set
$g(x,t)=\eta(t)\theta(x/M)u(x,t)$, which is compactly supported in $\R^n\times(0,1)$, so that our estimate holds.
\begin{displaymath}
\begin{array}{ccccccc}
(i\partial_t+\triangle)g&=&Vg&+&i\eta'(t)\theta\left(\frac{x}{M}\right)u&+&
\left(\frac{1}{M^2}\triangle\theta\left(\frac{x}{M}\right)u+\frac{2\nabla\theta\left(x/M\right)\cdot\nabla u}{M}\right)\\
 &=&I&+&II&+&III.
\end{array}
\end{displaymath}
Finally, let $\mu=(1+\epsilon)^{-3}\gamma R^2$. Our inequality then gives:
\begin{multline*}
\frac{\epsilon}{8}\frac{(1+\epsilon)^3}{\gamma}R^2\int\!\!\!\int e^{2\psi(t)}e^{2\mu\left|\frac{x}{R}+\phi(t)e_1\right|^2}|g|^2\leq\\\leq
\int\!\!\!\int e^{2\psi(t)}e^{2\mu\left|\frac{x}{R}+\phi(t)e_1\right|^2}\left\{I+II+III\right\}.
\end{multline*}
The contribution of $I$ to the right hand side is bounded by 
\begin{displaymath}
||V||_\infty\int\!\!\!\int e^{2\psi(t)}e^{2\mu\left|\frac{x}{R}+\phi(t)e_1\right|^2}|g|^2,
\end{displaymath}
so that, if $R$ is very large, we can hide it in the left side, to see that we only have to deal with $II$ and $III$. Recall that 
$\psi(t)=(1+\epsilon)\frac{R^4}{16\mu}t(1-t)\leq0$, so $e^{2\psi(t)}\leq1$. On the support of $\eta'$, we have $t(1-t)\leq1/R$, so that
$0\leq\phi(t)\leq1/R$. We now estimate
\begin{multline*}
2\mu\left|\frac{x}{R}+\phi(t)e_1\right|^2=\frac{2\gamma R^2}{(1+\epsilon)^3}\left\{\left|\frac{x}{R}\right|^2+2\frac{x_1}{R}\phi(t)+
\phi(t)^2\right\}=\frac{2\gamma}{(1+\epsilon)^3}|x|^2+\\+\frac{2\gamma}{(1+\epsilon)^3}x_1R\phi(t)+
\frac{2\gamma}{(1+\epsilon)^3}R^2\phi(t)^2\leq 2\gamma|x|^2+C_\epsilon,
\end{multline*}
on $\supp\eta'$, where $\phi(t)\leq1/R$. Thus, because of \eqref{plus}, the contribution of $II$ is bounded by $C_\epsilon R$. The contribution of $III$ is controlled by (recalling that $\eta\equiv0$ when $t(1-t)\leq\frac{1}{2}\frac{1}{R}$)
\begin{multline*}
\frac{C}{M^4}\int\!\!\!\int_{|x|\leq 2M} |u(x,t)|^2e^{2\psi(t)}e^{2\mu\left|\frac{x}{R}+\phi(t)e_1\right|^2}+\\+
\frac{C}{M^2}\int\!\!\!\int_{|x|\leq 2M} |\nabla u(x,t)|^2e^{2\psi(t)}e^{2\mu\left|\frac{x}{R}+\phi(t)e_1\right|^2}t(1-t)R.
\end{multline*}
If we use \eqref{plus}, $\psi\leq0$, the bound above for $2\mu\left|x/R+\phi(t)e_1\right|^2$ becomes
\begin{displaymath}
\leq \frac{2\gamma|x|^2}{(1+\epsilon)^3}+\frac{2\gamma|x_1|}{(1+\epsilon)^3}\frac{R}{4}+\frac{2\gamma}{(1+\epsilon)^3}\frac{R^2}{16}
\leq2\gamma|x|^2+ C_{\epsilon,R}.
\end{displaymath}
Thus, letting $M\to\infty$, we see that, for fixed $R$, $III\to0$, so that, since $\eta\equiv1$ on $t(1-t)\geq 1/R$, we obtain:
\begin{displaymath}
\frac{\epsilon}{8}\frac{(1+\epsilon)^3}{\gamma}R^2\int\!\!\!\int_{t(1-t)\geq1/R} 
e^{2\psi(t)}e^{2\mu\left|\frac{x}{R}+\phi(t)e_1\right|^2}|u|^2\leq C_{\gamma,\epsilon}R.
\end{displaymath}
We are now going to restrict to integration over the region $\left|\frac{x}{R}\right|\leq\delta$, $\left|t-1/2\right|\leq\delta$, where 
$\delta$ is small, to be chosen. Then,
\begin{displaymath}
\left|\frac{x}{R}+\phi(t)e_1\right|\geq\left|\phi\left(\frac{1}{2}\right)\right|-2\delta=\left(\frac{1}{4}-2\delta\right),
\end{displaymath}
so that 
\begin{displaymath}
\left|\frac{x}{R}+\phi(t)e_1\right|^2\geq\frac{1}{16}-2\delta\left(\frac{1}{2}-2\delta\right).
\end{displaymath}
\begin{multline*}
\psi(t)=\psi\left(\frac{1}{2}\right)+\left[\psi(t)-\psi\left(\frac{1}{2}\right)\right]\geq\psi\left(\frac{1}{2}\right)
-|\psi'(\theta)|\delta\geq\\\geq-(1+\epsilon)\frac{R^4}{16\mu}\frac{1}{4}-\delta(1+\epsilon)\frac{R^4}{16\mu},
\end{multline*}
so that, in our region of integration,
\begin{multline*}
2\mu\left|\frac{x}{R}+\phi(t)e_1\right|^2+2\psi(t)\geq\\\geq\frac{2\mu}{16}-\frac{2(1+\epsilon)R^4}{16\mu4}-
4\delta\mu\left(\frac{1}{2}-2\delta\right)-\delta(1+\epsilon)\frac{R^4}{16\mu}=
\frac{2}{16}\frac{\gamma R^2}{(1+\epsilon)^3}-\\-\frac{2}{16}\frac{(1+\epsilon)^4}{\gamma 4}R^2-C\delta R^2=
\frac{2}{16}R^2\left[\frac{\gamma}{(1+\epsilon)^3} - \frac{(1+\epsilon)^4}{4\gamma}-C\delta\right],
\end{multline*}
since $\mu=\frac{\gamma}{(1+\epsilon)^3}R^2$.
But, if $\gamma>1/2$, $$\frac{\gamma}{(1+\epsilon)^3} - \frac{(1+\epsilon)^4}{4\gamma}>0,$$ for some $\epsilon$ small, and so, for $\delta$ smaller than that we get a lower bound of $C_{\epsilon,\delta}R^2$. We thus have
\begin{displaymath}
C_{\epsilon,\delta}R\int_{|t-1/2|\leq\delta}\int_{\left|\frac{x}{R}\right|\leq\delta}|u|^2 e^{C_{\epsilon,\delta}R^2}\leq 
C_{\epsilon,\delta}.
\end{displaymath}
But then, since
\begin{multline*}
\int_{|t-1/2|\leq\delta}\int_{\left|\frac{x}{R}\right|>\delta}|u|^2=
\int_{|t-1/2|\leq\delta}\int_{\left|\frac{x}{R}\right|\leq\delta}e^{2\gamma|x|^2}e^{-2\gamma|x|^2}|u|^2\leq\\\leq
e^{-2\gamma\delta^2R^2}\int_{|t-1/2|\leq\delta}\int e^{2\gamma|x|^2}|u|^2\leq
C_\gamma e^{-2\gamma\delta^2R^2}
\end{multline*}
by \eqref{plus}, we see that, for appropriate $C_{\gamma,\epsilon,\delta}$ we have
\begin{displaymath}
\left(\int_{|t-1/2|\leq\delta}\int |u|^2 \right)e^{C_{\gamma,\epsilon,\delta}R^2}\leq C_{\gamma,\epsilon,\delta}.
\end{displaymath}
Letting $R\to\infty$, we see that $u\equiv0$ on $\{(x,t):|t-1/2|\leq\delta\}$, therefore $u\equiv0$.

\begin{bibdiv}
\begin{biblist}

\bib{A}{article}{
        title={Absence of diffusion in certain random lattices},
        subtitle={},
        author={Anderson, P.},
        journal={Phys. Review},
        volume={109},
        date={1958},
        pages={1492--1505}
    }

\bib{BK}{article}{
        title={On localization in the Anderson--Bernoulli model in higher dimensions},
        subtitle={},
        author={Bourgain, J.},
        author={Kenig, C.},
        journal={Invent. Math},
        volume={161},
        date={2005},
        pages={389--426}
    }

\bib{CKM}{article}{
        title={Anderson localization for Bernoulli and other singular potentials},
        subtitle={},
        author={Carmona, R.},
        author={Klein, A.},
        author={Martinelli, F.},
        journal={Commun. Math. Phys.},
        volume={108},
        date={1987},
        pages={41--66}
    }

\bib{EKPV1}{article}{
        title={Decay at infinity of caloric functions within characteristic hyperplanes},
        subtitle={},
        author={Escauriaza, L.},
        author={Kenig, C.},
        author={Ponce, G.},
        author={Vega, L.},
        journal={Math. Res. Lett.},
        volume={13},
        date={2006},
        pages={441--453}
    }

\bib{EKPV2}{article}{
        title={On uniqueness properties of solutions of Schr\"odinger equations},
        subtitle={},
        author={Escauriaza, L.},
        author={Kenig, C.},
        author={Ponce, G.},
        author={Vega, L.},
        journal={Commun. in PDE},
        volume={31},
        date={2006},
        pages={1811--1823}
    }

\bib{EKPV3}{article}{
        title={On uniqueness properties of solutions of the $k$-generalized KdV equations},
        subtitle={},
        author={Escauriaza, L.},
        author={Kenig, C.},
        author={Ponce, G.},
        author={Vega, L.},
        journal={Jour. Funct. Anal.},
        volume={244},
        date={2007},
        pages={504--535}
    }

\bib{EKPV4}{article}{
        title={Convexity properties of solutions to the free Schr\"odinger equation with Gaussian decay},
        subtitle={},
        author={Escauriaza, L.},
        author={Kenig, C.},
        author={Ponce, G.},
        author={Vega, L.},
        journal={to appear, MRL},
        volume={},
        date={},
        pages={}
    }

\bib{EKPV5}{article}{
        title={Hardy's uncertainty principle, convexity and Schr\"odinger evolutions},
        subtitle={},
        author={Escauriaza, L.},
        author={Kenig, C.},
        author={Ponce, G.},
        author={Vega, L.},
        journal={to appear, JEMS},
        volume={},
        date={},
        pages={}
    }

\bib{ESS}{article}{
        title={$L^{3,\infty}$ solutions to the Navier--Stokes equations and backward uniqueness},
        subtitle={},
        author={Escauriaza, L.},
        author={Seregin, G.},
        author={\v Sver\'ak, V.},
        journal={Russ. Math. Surv.},
        volume={58:2},
        date={2003},
        pages={211-250}
    }

\bib{FS}{article}{
        title={Absence of diffusion with Anderson tight binding model for large disorder or low energy},
        subtitle={},
        author={Fr\"olich, J.},
        author={Spencer, T.},
        journal={Commun. Math. Phys.},
        volume={88},
        date={1983},
        pages={151--184}
    }

\bib{GMP}{article}{
        title={Pure point spectrum of stochastic one dimensional Schr\"odinger operators},
        subtitle={},
        author={Goldsheid, Y.},
        author={Molchanov, S.},
        author={Pastur, L.},
        journal={Funct. Anal. Appl.},
        volume={11},
        date={1977},
        pages={1--10}
    }

\bib{IK}{article}{
        title={$L^p$ Carleman inequalities and uniqueness of solutions of non-linear Schr\"odinger equations},
        subtitle={},
        author={Ionescu, A.},
        author={Kenig, C.},
        journal={Acta Math.},
        volume={193},
        date={2004},
        pages={193--239}
    }

\bib{I}{article}{
        title={Carleman type estimates in anisotropic case and applications},
        subtitle={},
        author={Isakov, V.},
        journal={J. Diff. Eqs.},
        volume={105},
        date={1993},
        pages={217--238}
    }

\bib{K1}{article}{
        title={Some recent quantitative unique continuation theorems},
        subtitle={},
        author={Kenig, C.},
        journal={Rend. Accad. Naz. Sci. XL Mem. Mat. Appl.},
        volume={29},
        date={2005},
        pages={231--242}
    }

\bib{K2}{article}{
        title={Some recent applications of unique continuation},
        subtitle={},
        author={Kenig, C.},
        journal={Contemp. Math.},
        volume={439},
        date={2007},
        pages={25--56}
    }

\bib{KPV1}{article}{
        title={On the support of solutions to generalized KdV equation},
        subtitle={},
        author={Kenig, C.},
        author={Ponce, G.},
        author={Vega, L.},
        journal={Annales de l'Institut H. Poincar\'e},
        volume={19},
        date={2002},
        pages={191--208}
    }

\bib{KPV2}{article}{
        title={On the unique continuation of solutions to the generalized KdV equation},
        subtitle={},
        author={Kenig, C.},
        author={Ponce, G.},
        author={Vega, L.},
        journal={Math. Res. Lett},
        volume={10},
        date={2003},
        pages={833--846}
    }

\bib{KPV3}{article}{
        title={On unique continuation for nonlinear Schr\"odinger equations},
        subtitle={},
        author={Kenig, C.},
        author={Ponce, G.},
        author={Vega, L.},
        journal={Commun. Pure Appl. Math.},
        volume={60},
        date={2002},
        pages={1247--1262}
    }

\bib{LO}{article}{
        title={Generalized analyticity and some related properties of solutions of elliptic and parabolic equations},
        subtitle={},
        author={Landis, E. M.},
        author={Oleinik, O. A.},
        journal={Russ. Math. Surv.},
        volume={29},
        date={1974},
        pages={195--212}
    }

\bib{M}{article}{
        title={On the possible rate of decay at infinity of solutions of second order partial differential equations},
        subtitle={},
        author={Meshkov, V. Z.},
        journal={Math. USSR Sobornik},
        volume={72},
        date={1992},
        pages={343--360}
    }

\bib{R}{article}{
        title={Unicit\'e forte \`a l'infini pour KdV},
        subtitle={},
        author={Robbiano, L.},
        journal={ESAIM Control Optim. Calc. Var.},
        volume={8},
        date={2002},
        pages={933--939}
    }

\bib{SVW}{article}{
        title={Some harmonic analysis questions suggested by Anderson--Bernoulli models},
        subtitle={},
        author={Shubin, C.},
        author={Vakilian, R.},
        author={Wolff, T.},
        journal={GAFA},
        volume={8},
        date={1998},
        pages={932--964}
    }

\bib{SS}{book}{
	AUTHOR = {Stein, E. M.},
	author={Shakarchi, R.},
     	TITLE = {Complex analysis},
    	SERIES = {Princeton Lectures in Analysis, II},
 	PUBLISHER = {Princeton University Press},
   	ADDRESS = {Princeton, NJ},
      	YEAR = {2003},
    }

\bib{Z1}{article}{
        title={Unique continuation for the Korteweg--de Vries equation},
        subtitle={},
        author={Zhang, B. Y.},
        journal={SIAM J. Math. Anal.},
        volume={23},
        date={1992},
        pages={55--71}
    }

\bib{Z2}{article}{
        title={Unique continuation for the nonlinear Schr\"odinger equation},
        subtitle={},
        author={Zhang, B. Y.},
        journal={Proc. Roy. Soc. Edinburgh Sect. A},
        volume={127},
        date={1997},
        pages={191--205}
    }

\end{biblist}
\end{bibdiv}

\end{document}